\documentclass[12pt,oneside,a4papre]{amsart}

\usepackage{amssymb}
\usepackage{amsmath}
\usepackage{amsthm}
\usepackage{amscd}

\usepackage{longtable}
\usepackage{mathrsfs}

\usepackage[dvipdfmx]{xcolor}
\usepackage[dvipdfmx]{pict2e}
\usepackage[dvipdfmx]{graphicx}
\usepackage{comment}

\usepackage[scr=boondoxo]{mathalfa}

\usepackage{tikz}

\usepackage[all]{xy}

\usetikzlibrary{cd}

\setlength{\topmargin}{23mm}
\addtolength{\topmargin}{-1in}
\setlength{\oddsidemargin}{27mm}
\addtolength{\oddsidemargin}{-1in}
\setlength{\evensidemargin}{27mm}
\addtolength{\evensidemargin}{-1in}
\setlength{\textwidth}{156mm}
\setlength{\textheight}{230mm}


\theoremstyle{plain}
\newtheorem{theorem}{Theorem}[section]
\newtheorem{lemma}[theorem]{Lemma}
\newtheorem{corollary}[theorem]{Corollary}
\newtheorem{proposition}[theorem]{Proposition}

\theoremstyle{definition}
\newtheorem{definition}[theorem]{Definition}

\newcommand{\kah}{K\"{a}hler }

\newcommand{\idd}{i\partial\overline{\partial}}


\subjclass[2020]{32L05, 41A36}
\keywords{ 
$L^2$-estimates, singular Hermitian metrics, Nakano positivity.}

\begin{document}
\title
[Curvature operator and $L^2$-estimate condition]
{Curvature operator of holomorphic vector bundles and $L^2$-estimate condition for $(n,q)$ and $(p,n)$-forms}
\author{Yuta Watanabe}
\date{}

\begin{abstract}
    We study the positivity properties of the curvature operator for holomorphic Hermitian vector bundles. 
    The characterization of Nakano semi-positivity by $L^2$-estimate is already known. Applying our results, we give new characterizations of Nakano semi-negativity.
\end{abstract}

\vspace{-5mm}

\maketitle

\tableofcontents

\vspace{-8mm}

\section{Introduction}\label{section:1}

The aim of the present paper is to study the relation between the positivity properties of curvature operators for holomorphic Hermitian vector bundles and $L^2$-estimates by using the $(p,n)$-$L^2$-estimate condition (see Definition \ref{def (p,n)-condition}).
This type of condition was firstly introduced in \cite{HI20}, which was named as the twisted H\"{o}rmander condition.
After that, in a paper \cite{DNWZ20}, Deng et al. generalized this concept as the optimal $L^p$-estimate condition 
and gave a new characterizations of Nakano semi-positivity by $L^2$-estimate. 
Here, Nakano positive is equivalent to the positivity of curvature operator for $(n,1)$-forms.
In this paper, by examining the properties of curvature operators, we extend this characterization in \cite{DNWZ20} from $(n,1)$-forms to $(n,q)$ and $(p,n)$-forms
and obtain a characterizations of Nakano semi-negativity by $L^2$-estimates.
Finally we obtain one definition of Nakano semi-negativity and dual Nakano semi-positivity for singular Hermitian metrics with $L^2$-estimates.

Let $(X,\omega)$ be a complex manifold of complex dimension $n$ equipped with a Hermitian metric $\omega$ and $(E,h)$ be a holomorphic Hermitian vector bundle of rank $r$ over $X$.
Let $D=D'+\overline{\partial}$ be the Chern connection of $(E,h)$, and $\Theta_{E,h}=[D',\overline{\partial}]=D'\overline{\partial}+\overline{\partial}D'$ be the Chern curvature tensor. 
Let $(U,(z_1,\cdots,z_n))$ be local coordinates. Denote by $(e_1,\cdots,e_r)$ an orthonormal frame of $E$ over $U\subset X$, and
\begin{align*}
    i\Theta_{E,h,x_0}=i\sum_{j,k,\lambda,\mu}c_{jk\lambda\mu}dz_j\wedge d\overline{z}_k\otimes e^*_\lambda\otimes e_\mu,~\,\, \overline{c}_{jk\lambda\mu}=c_{kj\mu\lambda}.
\end{align*}
To $i\Theta_{E,h}$ corresponds a natural Hermitian form $\theta_{E,h}$ on $T_X\otimes E$ defined by 
\begin{align*}
    \theta_{E,h}(u,u)&=\sum c_{jk\lambda\mu}u_{j\lambda}\overline{u}_{k\mu},~\,\, u=\sum u_{j\lambda}\frac{\partial}{\partial z_j}\otimes e_\lambda\in T_{X,x_0}\otimes E_x,\\
    \mathrm{i.e.}~\,\,\, \theta_{E,h}&=\sum c_{jk\lambda\mu}(dz_j\otimes e^*_\lambda)\otimes\overline{(dz_k\otimes e^*_\mu)}.
\end{align*}

\begin{definition}
    Let $X$ be a complex manifold and $(E,h)$ be a holomorphic Hermitian vector bundle over $X$.
    \begin{itemize}
        \item $(E,h)$ is said to be $\it{Nakano ~positive}$ (resp. $\it{Nakano ~semi}$-$\it{positive}$) if $\theta_{E,h}$ is positive (resp. semi-positive) definite as a Hermitian form on $T_X\otimes E$, 
                    i.e. for any $u\in T_X\otimes E$, $u\ne0$, we have \[ \theta_{E,h}(u,u)>0~\,\,(\mathrm{resp}. \geq0). \]
                    We write $(E,h)>_{Nak}0$, i.e. $i\Theta_{E,h}>_{Nak}0$ (resp. $\geq_{Nak}0$) for Nakano positivity (resp. semi-negativity).
        \item $(E,h)$ is said to be $\it{Griffiths ~positive}$ (resp. $\it{Griffiths ~semi}$-$\it{positive}$) if 
                    for any $\xi\in T_{X,x}$, $\xi\ne0$ and $s\in E_x$, $s\ne0$, we have \[ \theta_{E,h}(\xi\otimes s,\xi\otimes s)>0~\,\,(\mathrm{resp}. \geq0). \]
                    We write $(E,h)>_{Grif}0$, i.e. $i\Theta_{E,h}>_{Grif}0$ (resp. $\geq_{Grif}0$) for Griffiths positivity (resp. semi-negativity).
        \item $\it{Nakano ~negative}$ (resp. $\it{Nakano ~semi}$-$\it{negative}$) and $\it{Griffiths ~negative}$ (resp. $\it{Griffiths ~semi}$-$\it{negative}$) are similarly defined by replacing $>0$ (resp. $\geq0$) by $<0$ (resp. $\leq0$) in the above definitions respectively.
    \end{itemize}
\end{definition}

We now explain a few notions to state our results more precisely. Let $\mathcal{E}^{p,q}(E)$ be the sheaf of germs of $\mathcal{C}^\infty$ sections of $\Lambda^{p,q}T^*_X\otimes E$ and $\mathcal{D}^{p,q}(E)$ be the space of $\mathcal{C}^\infty$ sections of $\Lambda^{p,q}T^*_X\otimes E$ with compact support on $X$.
We say that $\{(U_\alpha,\iota_\alpha)\}_\alpha$ is a $\it{local ~Stain ~coordinate ~system}$ if any local coordinates $\iota_\alpha:U_\alpha\to \iota(U_\alpha)\subset\mathbb{C}^n$ satisfies that $\iota(U_\alpha)$ is Stein. 
By definition, every complex manifold always has a local Stein coordinate system.

\begin{definition}$(\mathrm{cf.~}$\cite{BP08},~\cite{Rau15} and \cite{PT18})
    Let $X$ be a complex manifold and $E$ be a holomorphic vector bundle over $X$. We say that $h$ is a $\it{singular~Hermitian~metric}$ on $E$ if $h$ is a measurable map from the base manifold $X$ to the space of non-negative Hermitian forms on the fibers satisfying $0<\mathrm{det} h<+\infty$ almost everywhere.
\end{definition}

We already know that for a singular Hermitian metric, we cannot always define the curvature currents with measure coefficients (see \cite{Rau15}). 
However, the following definition can be defined with the singular case by not using the curvature currents of a singular Hermitian metric directly.

\begin{definition}\label{def (n,q)-condition}
    Let $(X,\omega)$ be a \kah manifold of dimension $n$ which admits a positive holomorphic Hermitian line bundle and $E$ be a holomorphic vector bundle over $X$ equipped with a (singular) Hermitian metric $h$.
    $(E,h)$ satisfies $\it{the}$ $(n,q)$-$L^2_\omega$-$\it{estimate}$ $\it{condition}$ on $X$ for $q\geq1$,
    if for any positive holomorphic Hermitian line bundle $(A,h_A)$ on $X$ and for any $f\in\mathcal{D}^{n,q}(X,E\otimes A)$ with $\overline{\partial}f=0$,
    there is $u\in L^2_{n,q-1}(X,E\otimes A)$ satisfying $\overline{\partial}u=f$ and 
    \[ \int_X|u|^2_{h\otimes h_A,\omega}dV_\omega\leq\int_X\langle[i\Theta_{A,h_A}\otimes \mathrm{id}_E,\Lambda_\omega]^{-1}f,f\rangle_{h\otimes h_A,\omega} dV_\omega, \]
    provided that the right hand side is finite.

    And $(E,h)$ satisfies $\it{the}$ $(n,q)$-$L^2$-$\it{estimate~ condition}$ on $X$ if there exists a \kah metric $\tilde{\omega}$ such that $(E,h)$ satisfies the $(n,q)$-$L^2_{\tilde{\omega}}$-estimate condition on $X$
\end{definition}

This definition is an extension of [DNWZ20,\,Definition\,1.1] for $(n,1)$-form to $(n,q)$-forms.
Similarly, we define the following with respect to $(p,n)$-forms.

\begin{definition}\label{def (p,n)-condition}
    Let $(X,\omega)$ be a \kah manifold of dimension $n$ which admits a positive holomorphic Hermitian line bundle and $E$ be a holomorphic vector bundle over $X$ equipped with a (singular) Hermitian metric $h$.
    $(E,h)$ satisfies $\it{the}$ $(p,n)$-$L^2_\omega$-$\it{estimate}$ $\it{condition}$ on $X$ for $p\geq0$,
    if for any positive holomorphic Hermitian line bundle $(A,h_A)$ on $X$ and for any $f\in\mathcal{D}^{p,n}(X,E\otimes A)$ with $\overline{\partial}f=0$,
    there is $u\in L^2_{p,n-1}(X,E\otimes A)$ satisfying $\overline{\partial}u=f$ and 
    \[ \int_X|u|^2_{h\otimes h_A,\omega}dV_\omega\leq\int_X\langle[i\Theta_{A,h_A}\otimes \mathrm{id}_E,\Lambda_\omega]^{-1}f,f\rangle_{h\otimes h_A,\omega} dV_\omega, \]
    provided that the right hand side is finite.

    And $(E,h)$ satisfies $\it{the}$ $(p,n)$-$L^2$-$\it{estimate~ condition}$ on $X$ if there exists \kah metric $\tilde{\omega}$ such that $(E,h)$ satisfies the $(p,n)$-$L^2_{\tilde{\omega}}$-estimate condition on $X$
\end{definition}

Let $(X,\omega)$ be a Hermitian manifold and $(E,h)$ be a holomorphic Hermitian vector bundle over $X$.
We denote the curvature operator $[i\Theta_{E,h},\Lambda_\omega]$ on $\Lambda^{p,q}T^*_X\otimes E$ by $A^{p,q}_{E,h,\omega}$.
And the fact that the curvature operator $[i\Theta_{E,h},\Lambda_\omega]$ is positive (resp. semi-positive) definite on $\Lambda^{p,q}T^*_X\otimes E$ is simply written as $A^{p,q}_{E,h,\omega}>0$ (resp. $\geq 0$).\\

Let $(X,\omega)$ be a \kah manifold of dimension $n$ which admits a positive holomorphic Hermitian line bundle.
In [DNWZ20], by using fact that $(E,h)$ is Nakano semi-positive if and only if $A^{n,1}_{E,h,\omega}\geq0$, they showed the following relationship between the $(n,1)$-$L^2$-estimate condition and Nakano semi-positivity:

$(E,h)$ satisfies the $(n,1)$-$L^2$-estimate condition $\Longrightarrow ~ i\Theta_{E,h}\geq_{Nak}0,$ i.e. $A^{n,1}_{E,h,\omega}\geq0$.\\
We have shown this result for the general case, the $(n,q)$ and $(p,n)$-$L^2$-estimate condition, focusing on $A^{n,q}_{E,h,\omega}\geq0$ and $A^{p,n}_{E,h,\omega}\geq0$ instead of $i\Theta_{E,h}\geq_{Nak}0$.

\begin{theorem}\label{Main thm 1}
    Let $(X,\omega)$ be a \kah manifold of dimension $n$ which admits a positive holomorphic Hermitian line bundle and $(E,h)$ be a holomorphic Hermitian vector bundle over $X$ and $q$ be a positive integer. 
    Then $(E,h)$ satisfies the $(n,q)$-$L^2_\omega$-estimate condition on $X$ if and only if $A^{n,q}_{E,h,\omega}\geq0$.
\end{theorem}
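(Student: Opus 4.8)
The plan is to prove the two implications separately. The direction $A^{n,q}_{E,h,\omega}\ge 0\Rightarrow$ the $(n,q)$-$L^2_\omega$-estimate condition is the Hörmander--Andreotti--Vesentini--Demailly $L^2$-existence theorem, whereas the converse, that the estimate forces positivity of the curvature operator, is the substantive new part and I would obtain it by a localization argument adapted from \cite{DNWZ20} to the $(n,q)$ setting.

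For sufficiency, fix a positive holomorphic Hermitian line bundle $(A,h_A)$ and a $\overline{\partial}$-closed $f\in\mathcal{D}^{n,q}(X,E\otimes A)$ with finite right-hand side. On $(n,q)$-forms the Chern curvature of $E\otimes A$ decomposes additively, so the associated curvature operator splits as
\[ [i\Theta_{E\otimes A,h\otimes h_A},\Lambda_\omega]=A^{n,q}_{E,h,\omega}\otimes\mathrm{id}_A+[i\Theta_{A,h_A}\otimes\mathrm{id}_E,\Lambda_\omega]. \]
Since $A^{n,q}_{E,h,\omega}\ge 0$ by hypothesis and $[i\Theta_{A,h_A}\otimes\mathrm{id}_E,\Lambda_\omega]>0$ on $(n,q)$-forms (as $q\ge 1$ and $(A,h_A)$ is positive), the operator $B:=[i\Theta_{E\otimes A,h\otimes h_A},\Lambda_\omega]$ is positive definite and satisfies $B\ge[i\Theta_{A,h_A}\otimes\mathrm{id}_E,\Lambda_\omega]$; by operator monotonicity of the inverse this yields $B^{-1}\le[i\Theta_{A,h_A}\otimes\mathrm{id}_E,\Lambda_\omega]^{-1}$. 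Running the $L^2$-existence theorem with the curvature operator $B$ (handling possible non-completeness of $\omega$ via the positive line bundle together with a standard exhaustion, legitimate because $f$ has compact support) produces $u$ with $\overline{\partial}u=f$ and $\int_X|u|^2_{h\otimes h_A,\omega}\,dV_\omega\le\int_X\langle B^{-1}f,f\rangle\,dV_\omega\le\int_X\langle[i\Theta_{A,h_A}\otimes\mathrm{id}_E,\Lambda_\omega]^{-1}f,f\rangle_{h\otimes h_A,\omega}\,dV_\omega$, which is exactly the desired estimate.

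For necessity I would argue by contradiction: suppose the condition holds yet $A^{n,q}_{E,h,\omega}$ is not semi-positive, so at some $x_0$ there is an $(n,q)$-form $\alpha$ with $\langle A^{n,q}_{E,h,\omega}\alpha,\alpha\rangle<0$. Working in a local Stein coordinate chart at $x_0$, I would take the given positive line bundle and replace its weight by a large multiple of a strictly plurisubharmonic bump (replacing $h_A$ by $h_Ae^{-m|z|^2}$ near $x_0$) so that $i\Theta_{A,h_A}$ concentrates at $x_0$, and build a compactly supported $\overline{\partial}$-closed datum $f_m$ modeled on $\alpha$. Feeding $f_m$ into the estimate bounds the minimal solution norm from above by the computable right-hand side. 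I would then bound the minimal norm from below through the duality identity
\[ \inf\{\,\|u\|^2:\overline{\partial}u=f_m\,\}=\sup_{v}\frac{|\langle f_m,v\rangle|^2}{\|\overline{\partial}^{\,*}v\|^2}, \]
testing against a normalized, cut-off, weight-twisted copy of $\alpha$. Expanding $h$ to second order at $x_0$ makes the curvature tensor $i\Theta_{E,h}$, hence the operator $A^{n,q}_{E,h,\omega}$, appear in the leading asymptotics as $m\to\infty$; matching the two bounds then forces $\langle A^{n,q}_{E,h,\omega}\alpha,\alpha\rangle\ge 0$, contradicting the choice of $\alpha$.

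The main obstacle is this converse, and specifically the passage from $q=1$ to general $q$: one must track the combinatorics of how $[i\Theta_{E,h},\Lambda_\omega]$ acts on $(n,q)$-forms and isolate the $E$-curvature contribution in the asymptotic expansion, since for $q>1$ the curvature operator is no longer governed by a single Nakano-type quadratic form. A second delicate point is calibrating the scaling $m$ of the line-bundle weight so that the concentrating positive curvature of $A$ makes the dual test form admissible and keeps the remainder terms subleading, while still letting the (possibly negative) $E$-curvature term be detected at leading order; one also has to verify that the upper bound from the estimate and the lower bound from the duality identity are asymptotically sharp in the same regime.
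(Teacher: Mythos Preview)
Your overall strategy matches the paper's: sufficiency via the H\"ormander--Demailly $L^2$-existence theorem plus operator monotonicity of the inverse, and necessity via localization and contradiction at a point of negativity, using the given estimate together with a duality pairing. The duality identity you invoke is exactly the Cauchy--Schwarz step $|\langle\langle f,\alpha\rangle\rangle|^2\le\|u\|^2\,\|\overline{\partial}^*\alpha\|^2$ the paper uses. Two refinements are worth noting. First, the paper makes the curvature operator appear not by ``expanding $h$ to second order'' in the norms but by applying the Bochner--Kodaira--Nakano identity to $\|\overline{\partial}^*\alpha\|^2$, and then takes the specific test form $\alpha=B_{\tilde\psi}^{-1}f$ with $B_{\tilde\psi}=[i\partial\overline{\partial}\tilde\psi\otimes\mathrm{id}_E,\Lambda_\omega]$; this choice cancels the line-bundle contribution exactly and yields the clean inequality
\[
\langle\langle[i\Theta_{E,h},\Lambda_\omega]\alpha,\alpha\rangle\rangle_{\tilde\psi}+\|D'\alpha\|^2_{\tilde\psi}+\|D'^*\alpha\|^2_{\tilde\psi}\ge 0,
\]
which one then violates by building $f=\overline{\partial}v$ explicitly and a weight $\tilde\psi_m$ via a regularized-max construction (Proposition~\ref{posi line bdl metric}), rather than by matching asymptotic upper and lower bounds. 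Second, your anticipated obstacle for $q>1$ does not arise: since the test form $\alpha$ has bidegree $(n,q)$, the form $D'\alpha$ has bidegree $(n+1,q)$ and vanishes identically, so the displayed inequality reduces to controlling only $\|D'^*\alpha\|^2$, and the argument is uniformly no harder than the $(n,1)$ case in \cite{DNWZ20}. The paper in fact remarks that the $(n,q)$ case is \emph{easier} than the $(p,n)$ case of Theorem~\ref{Main thm 2} for precisely this reason; there is no extra combinatorics to track.
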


\begin{theorem}\label{Main thm 2}
    Let $(X,\omega)$ be a \kah manifold of dimension $n$ which admits a positive holomorphic Hermitian line bundle and $(E,h)$ be a holomorphic Hermitian vector bundle over $X$ and $p$ be a nonnegative integer. 
    Then $(E,h)$ satisfies the $(p,n)$-$L^2_\omega$-estimate condition on $X$ if and only if $A^{p,n}_{E,h,\omega}\geq0$.
\end{theorem}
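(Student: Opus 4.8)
The plan is to prove both implications directly, in parallel with Theorem \ref{Main thm 1}, exploiting the fact that at top anti-holomorphic degree the curvature operator degenerates. Indeed, for a $(p,n)$-form $u$ the wedge $i\Theta_{E,h}\wedge u$ has bidegree $(p+1,n+1)=0$, so $[i\Theta_{E,h},\Lambda_\omega]u=i\Theta_{E,h}\wedge\Lambda_\omega u$ and $A^{p,n}_{E,h,\omega}$ is computed from a single term. A direct computation in a normal frame shows that for a positive line bundle $(A,h_A)$ with $i\Theta_{A,h_A}=\sum_j a_j\,i\,dz_j\wedge d\overline{z}_j$ the operator $[i\Theta_{A,h_A}\otimes\mathrm{id}_E,\Lambda_\omega]$ acts on the basis form $dz_J\wedge d\overline{z}_1\wedge\cdots\wedge d\overline{z}_n\otimes e$ by the eigenvalue $\sum_{j\in J}a_j$. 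Hence for $p\geq1$ this operator is positive definite, while for $p=0$ it vanishes identically; in the latter case $A^{0,n}_{E,h,\omega}=0$ as well (since $\Lambda_\omega u=0$ on $(0,n)$-forms) and the estimate condition is vacuous, so I treat $p=0$ as a degenerate case and assume $p\geq1$ below.

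For the sufficiency ($A^{p,n}_{E,h,\omega}\geq0\Rightarrow$ the $(p,n)$-$L^2_\omega$-estimate condition), fix a positive $(A,h_A)$ and a $\overline{\partial}$-closed $f\in\mathcal{D}^{p,n}(X,E\otimes A)$. The curvature operator of the twisted bundle splits as
\[
B:=A^{p,n}_{E\otimes A,\,h\otimes h_A,\,\omega}=A^{p,n}_{E,h,\omega}\otimes\mathrm{id}_A+[i\Theta_{A,h_A}\otimes\mathrm{id}_E,\Lambda_\omega].
\]
By the previous paragraph $B\geq[i\Theta_{A,h_A}\otimes\mathrm{id}_E,\Lambda_\omega]>0$, whence $B^{-1}\leq[i\Theta_{A,h_A}\otimes\mathrm{id}_E,\Lambda_\omega]^{-1}$ as positive operators. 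Applying the Demailly--H\"ormander--Nakano $L^2$-existence theorem for $(p,n)$-forms (valid since $n\geq1$) on the \kah manifold $(X,\omega)$ --- using the positive line bundle together with an exhaustion by relatively compact sets and a complete \kah metric to guarantee solvability on the possibly noncompact $X$ --- produces $u$ with $\overline{\partial}u=f$ and $\int_X|u|^2\leq\int_X\langle B^{-1}f,f\rangle\leq\int_X\langle[i\Theta_{A,h_A}\otimes\mathrm{id}_E,\Lambda_\omega]^{-1}f,f\rangle$, which is the required inequality.

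The necessity ($(p,n)$-$L^2_\omega$-estimate condition $\Rightarrow A^{p,n}_{E,h,\omega}\geq0$) is the substantive direction, and I would follow the strategy of \cite{DNWZ20}. The first step is a functional-analytic reformulation: solvability of $\overline{\partial}u=f$ with $\int_X|u|^2\leq\int_X\langle[i\Theta_{A,h_A}\otimes\mathrm{id}_E,\Lambda_\omega]^{-1}f,f\rangle$ for every closed $f$ of finite right-hand side is equivalent, by duality, to a quadratic inequality $|\langle f,g\rangle|^2\leq(\int_X\langle[i\Theta_{A,h_A}\otimes\mathrm{id}_E,\Lambda_\omega]^{-1}f,f\rangle)\,\|\overline{\partial}^{*}g\|^2$ for test forms $g\in\mathcal{D}^{p,n}(X,E\otimes A)$. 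The second step feeds the Bochner--Kodaira--Nakano identity on $(X,\omega)$ into the right-hand side: for $(p,n)$-forms this identity isolates precisely the operator $A^{p,n}_{E\otimes A}$, so after subtracting the line-bundle part it exposes $A^{p,n}_{E,h,\omega}$. The decisive third step is a localization at an arbitrary point $x_0$: given $v\in\Lambda^{p,n}T^*_{X,x_0}\otimes E_{x_0}$ I would build a family of compactly supported closed forms concentrating at $x_0$ in the direction $v$, insert them into the inequality, and analyze the asymptotics so that a pointwise negativity $\langle A^{p,n}_{E,h,\omega}v,v\rangle<0$ would force the minimal $L^2$-solution to exceed the optimal bound, a contradiction. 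I expect this localization and asymptotic analysis to be the main obstacle, since the bookkeeping differs from the $(n,1)$-case of \cite{DNWZ20}: here the anti-holomorphic indices are saturated and the holomorphic multi-index $J$ plays the role that the anti-holomorphic index played there, so the explicit local model solution and all sign conventions must be re-derived for $(p,n)$-forms.

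Finally, it is worth recording the conceptual reason behind the statement, which also explains why I carry out the two implications directly rather than reducing to Theorem \ref{Main thm 1}. Combining complex conjugation with the metric isomorphisms $\overline{E}\cong E^{*}$ and $\overline{A}\cong A^{*}$ identifies $(p,n)$-forms valued in $E$ with $(n,p)$-forms valued in $E^{*}$ and transforms $A^{p,n}_{E,h,\omega}\geq0$ into $A^{n,p}_{E^{*},h^{*},\omega}\leq0$; in particular $A^{p,n}_{E,h,\omega}\geq0$ encodes a Nakano semi-negativity of $E^{*}$, in agreement with the abstract. This duality, however, sends the positive twist $A$ to the negative line bundle $A^{*}$, so it does not place the estimate condition within the hypotheses of Theorem \ref{Main thm 1}, which is exactly why the direct argument above is needed.
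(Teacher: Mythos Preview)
Your overall strategy matches the paper's: the sufficiency direction via the splitting $A^{p,n}_{E\otimes A}=A^{p,n}_{E,h,\omega}+[i\Theta_{A,h_A}\otimes\mathrm{id}_E,\Lambda_\omega]$ and H\"ormander--Demailly existence is exactly what the paper does, and for the necessity direction the paper also argues by localization and contradiction in the style of \cite{DNWZ20}, choosing the test form $\alpha_m=B_{\tilde\psi_m}^{-1}f$ with $f=\overline\partial v$ built from a constant-coefficient $(p,n)$-tensor $\xi$ at the bad point.

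There is, however, a genuine gap in your second step. You write that for $(p,n)$-forms the Bochner--Kodaira--Nakano identity ``isolates precisely the operator $A^{p,n}_{E\otimes A}$''. It does not. For a compactly supported $(p,n)$-form $\alpha$ one has $\overline\partial\alpha=0$, but the identity still reads
\[
\|\overline\partial^{*}\alpha\|^2=\|D'\alpha\|^2+\|D'^{*}\alpha\|^2+\langle\!\langle A^{p,n}_{E\otimes A}\alpha,\alpha\rangle\!\rangle,
\]
and neither $D'\alpha$ nor $D'^{*}\alpha$ vanishes in general. After the algebra (substituting $\alpha=B_{\tilde\psi}^{-1}f$ and cancelling the line-bundle part) the inequality one must contradict is
\[
\langle\!\langle[i\Theta_{E,h},\Lambda_\omega]\alpha,\alpha\rangle\!\rangle_{\tilde\psi}+\|D'\alpha\|^2_{\tilde\psi}+\|D'^{*}\alpha\|^2_{\tilde\psi}\ge0.
\]
This is exactly where the $(p,n)$ case departs from the $(n,q)$ case of Theorem~\ref{Main thm 1}: there $D'\alpha$ is an $(n+1,q)$-form and vanishes identically, so only $\|D'^{*}\alpha\|^2$ has to be absorbed (which is $O(|z|^2)$ and harmless). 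Here $D'\alpha$ is a nonzero $(p+1,n)$-form, and on the inner ball one computes $D'\alpha_m=-\frac{1}{p}\,\xi\wedge\sum_j\bar z_j\,dz_j+O(|z|^3)$, which is only $O(|z|)$; on the outer annulus $D'\alpha_m$ contains a term $-\frac{1}{p}f\wedge\sum_j\bar z_j\,dz_j$ that does \emph{not} decay as $m\to\infty$. Controlling these contributions is the substantive new work, not ``bookkeeping'': the paper handles it by (i) shrinking the inner radius $R$ so that $|D'\alpha_m|^2<\tfrac14 c$ on $B_R$, (ii) using a regularized-max weight $\psi_m=M_{\varepsilon/2}(\varphi,m\varphi_\varepsilon)$ (Proposition~\ref{posi line bdl metric}) so that $e^{-m\varphi_\varepsilon}$ kills the non-decaying $|z|^2$ contribution on the outer annulus as $m\to\infty$, and (iii) letting the middle annulus $B_{R+2\varepsilon}\setminus B_R$ have vanishing volume as $\varepsilon\to0$. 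Without this three-region construction and the two-parameter limit $m\to\infty$, $\varepsilon\to0$, the contradiction does not close; your outline should flag the survival of $\|D'\alpha\|^2$ as the main obstacle and incorporate this mechanism.
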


And, by studying the properties of the curvature operator, we obtain the following characterizations of Nakano semi-negativity using Theorem \ref{Main thm 2}.

\begin{theorem}\label{Main thm 3}
    Let $(X,\omega)$ be a \kah manifold of dimension $n$ which admits a positive holomorphic Hermitian line bundle and $(E,h)$ be a holomorphic Hermitian vector bundle over $X$. 
    Then $(E^*,h^*)$ satisfies the $(1,n)$-$L^2$-estimate condition on $X$ if and only if $(E,h)$ is Nakano semi-negative.
\end{theorem}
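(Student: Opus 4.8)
The plan is to deduce Theorem \ref{Main thm 3} from Theorem \ref{Main thm 2} applied to the dual bundle $(E^*,h^*)$ with $p=1$, together with a pointwise algebraic identity that converts the positivity of the curvature operator $A^{1,n}_{E^*,h^*,\omega}$ into the Nakano semi-negativity of $(E,h)$. Since Theorem \ref{Main thm 2} already gives, for every \kah metric $\tilde{\omega}$, the equivalence of the $(1,n)$-$L^2_{\tilde{\omega}}$-estimate condition for $(E^*,h^*)$ with $A^{1,n}_{E^*,h^*,\tilde{\omega}}\geq0$, the whole statement reduces to the pointwise claim
\[
A^{1,n}_{E^*,h^*,\omega}\geq0 \iff i\Theta_{E,h}\leq_{Nak}0,
\]
valid for an arbitrary Hermitian metric $\omega$; the decisive feature is that the right-hand side does not depend on $\omega$.

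First I would compute the curvature operator on $(1,n)$-forms. Fix $x_0$, choose coordinates with $\omega=i\sum_l dz_l\wedge d\overline{z}_l$ at $x_0$ and an orthonormal frame, and write a test form as $u=\sum_{j,\lambda}u_{j\lambda}\,dz_j\wedge d\overline{z}_1\wedge\cdots\wedge d\overline{z}_n\otimes e_\lambda$. Because wedging with $i\Theta$ raises the antiholomorphic degree to $n+1$, the term $\Lambda\, i\Theta\, u$ vanishes, so $[i\Theta,\Lambda]u=i\Theta\wedge(\Lambda u)$; a direct contraction-and-wedge computation then yields, for a bundle $F$ with curvature coefficients $c^F_{jk\lambda\mu}$,
\[
\langle A^{1,n}_{F,h_F,\omega}u,u\rangle=\sum_{a,j,\sigma,\tau}c^F_{aj\sigma\tau}\,u_{j\sigma}\,\overline{u_{a\tau}}.
\]
Next I would substitute the dual curvature coefficients $c^{E^*}_{ab\sigma\tau}=-c^{E}_{ab\tau\sigma}$ and use the Hermitian symmetry $\overline{c^{E}_{jk\lambda\mu}}=c^{E}_{kj\mu\lambda}$ to rewrite the right-hand side as $-\sum_{j,k,\lambda,\mu}\overline{c^{E}_{jk\lambda\mu}}\,u_{j\lambda}\overline{u_{k\mu}}$, that is, minus the quadratic form of the conjugate-transpose of the Nakano matrix of $(E,h)$. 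Since a Hermitian matrix and its conjugate-transpose have the same signature, this form is $\geq0$ for all $u$ if and only if $\sum c^{E}_{jk\lambda\mu}u_{j\lambda}\overline{u_{k\mu}}\leq0$ for all $u$, i.e. exactly $\theta_{E,h}\leq0$. Here the array $(u_{j\lambda})$ runs over all coefficient data, which is precisely the data of a general element of $T_X\otimes E$, so one recovers full Nakano semi-negativity rather than a weaker exterior-power condition; this is special to $p=1$.

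With the pointwise equivalence in hand the theorem follows. If $(E,h)$ is Nakano semi-negative, then $A^{1,n}_{E^*,h^*,\tilde{\omega}}\geq0$ for every \kah metric $\tilde{\omega}$, so Theorem \ref{Main thm 2} gives the $(1,n)$-$L^2_{\tilde{\omega}}$-estimate condition, hence the $(1,n)$-$L^2$-estimate condition. Conversely, if $(E^*,h^*)$ satisfies the $(1,n)$-$L^2$-estimate condition, there is a \kah metric $\tilde{\omega}$ for which the $(1,n)$-$L^2_{\tilde{\omega}}$-estimate condition holds; Theorem \ref{Main thm 2} yields $A^{1,n}_{E^*,h^*,\tilde{\omega}}\geq0$, and the pointwise equivalence gives $i\Theta_{E,h}\leq_{Nak}0$. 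The main obstacle I expect is the pointwise lemma: getting the signs and the four-index bookkeeping correct when passing to the dual curvature, and—more conceptually—verifying the signature-invariance under conjugate-transpose, which is what makes the $\omega$-dependent operator condition collapse onto the metric-independent Nakano condition. This metric-independence is exactly what reconciles the existential quantifier over $\tilde{\omega}$ in the $(1,n)$-$L^2$-estimate condition with the intrinsic notion of Nakano semi-negativity.
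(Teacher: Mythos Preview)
Your proposal is correct, and the core reduction to Theorem~\ref{Main thm 2} with $p=1$ applied to $(E^*,h^*)$ is exactly what the paper does. The difference lies in how you establish the pointwise equivalence $A^{1,n}_{E^*,h^*,\omega}\geq0\iff i\Theta_{E,h}\leq_{Nak}0$. The paper obtains this via its general structural results: Lemma~\ref{Nakano iff A(n,1)} identifies Nakano semi-negativity with $A^{n,1}_{E,h,\omega}\leq0$, and then Corollary~\ref{Nakano & curvature operator} (which packages Theorem~\ref{(p,q) and (n-q,n-p)} and Theorem~\ref{A_E and A_E^*}) converts this to $A^{1,n}_{E^*,h^*,\omega}\geq0$. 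You instead compute $\langle A^{1,n}_{E^*,h^*,\omega}u,u\rangle$ directly for $(1,n)$-forms, using that $i\Theta\wedge u=0$ in bidegree $(2,n+1)$, and match it to $-\theta_{E,h}(\bar u,\bar u)$. Your route is more elementary and entirely self-contained for this particular bidegree, while the paper's route reuses machinery it has already built for arbitrary $(p,q)$; the trade-off is that your argument does not generalize away from $p=1$, whereas the paper's does. One small comment: what you call ``the conjugate-transpose of the Nakano matrix'' is really just the substitution $(u_{j\lambda})\mapsto(\overline{u_{j\lambda}})$ in the Hermitian form $\theta_{E,h}$; since this is a bijection on coefficient arrays and $\theta_{E,h}$ is real-valued, the signature argument you sketch is indeed trivially valid.
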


\section{Properties of the curvature operator}\label{section:2}

In this section, we compute the value by the curvature operator in detail and study the properties related to the positivity of the curvature operator.

Let $(M,g)$ be an oriented Riemannian $\mathcal{C}^\infty$-manifold with $\dim_\mathbb{R}M=m$.
Let $\xi\lrcorner~$ be the interior product $\iota_\xi$ for $\xi\in T_M$. If $(\xi_1,\cdots,\xi_m)$ is an orthonormal basis of $(T_M,g)$ at $x_0$, 
then for any ordered multi-index $I=\{i_1,\cdots,i_p\}$ with $i_1<\cdots<i_p$ and $|I|=p\in\{1,\cdots,m\}$, $\bullet \lrcorner~\bullet$ is the bilinear operator characterized by
\begin{align*}
    \xi_s\lrcorner~(\xi_{i_1}^*\wedge\cdots\wedge\xi_{i_p}^*)=
    \begin{cases}
        0 & \mathrm{if}~ s\notin\{i_1,\cdots,i_p\}\\
        (-1)^{k-1}\xi_{i_1}^*\wedge\cdots\widehat{\xi_{i_k}^*}\cdots\wedge\xi_{i_p}^* & \mathrm{if}~s=i_k.
    \end{cases}
\end{align*}

Therefore we introduce a symbol to represent this number $(-1)^{k-1}$ using $s$ and $I$.

\begin{definition}
    Let $(M,g),(\xi_1,\cdots,\xi_m)$ and $I$ be as in above.
    We define $\varepsilon(s,I)\in \{-1,0,1\}$ as the number that satisfies $\xi_s\lrcorner~\xi_I^*=\varepsilon(s,I)\xi^*_{I\setminus s}$, 
    where if $s\notin I$ then $\varepsilon(s,I)=0$ and if $s\in I$ then $\varepsilon(s,I)\in\{-1,1\}$. 

    Let $(X,\omega)$ be a Hermitian manifold, $\dim_\mathbb{C}X=n$. If $(\partial/\partial z_1,\cdots,\partial/\partial z_n)$ is an orthonormal basis of $(T_X,\omega)$ at $x_0$ then we define $\varepsilon(s,I)$ in the same way as follows 
    $\displaystyle \frac{\partial}{\partial z_s}\lrcorner~ dz_I=\varepsilon(s,I)dz_{I\setminus s}$. In particular, we have that $\displaystyle \frac{\partial}{\partial \overline{z}_s}\lrcorner~ d\overline{z}_I=\varepsilon(s,I)d\overline{z}_{I\setminus s}$.
\end{definition}


Using the symbols $\varepsilon(s,I)$ in this definition, we obtain the following detailed calculation results.

\begin{proposition}\label{calculate (p,q)-forms}
    Let $(X,\omega)$ be a Hermitian manifold and $(E,h)$ be a holomorphic Hermitian vector bundle over $X$.
    Let $x_0\in X$ and $(z_1,\ldots,z_n)$ be local coordinates such that $(\partial/\partial z_1,\ldots,\partial/\partial z_n)$ is an orthonormal basis of $(T_X,\omega)$ at $x_0$. Let $(e_1,\ldots,e_r)$ be an orthonormal basis of $E_{x_0}$. We can write
\begin{align*}
    \omega_{x_0}=i\sum_{1\leq j\leq n}dz_j\wedge d\overline{z}_j,\quad
    i\Theta_{E,h,x_0}=i\sum_{j,k,\lambda,\mu}c_{jk\lambda\mu}dz_j\wedge d\overline{z}_k\otimes e^*_\lambda\otimes e_\mu.
\end{align*}
Let $J,K,L$ and $M$ be ordered multi-indices with $|J|=|L|=p$ and $|K|=|M|=q$.
For any $(p,q)$-form $ u=\sum_{|J|=p,|K|=q,\lambda}u_{J,K,\lambda}dz_J\wedge d\overline{z}_K\otimes e_\lambda\in\Lambda^{p,q}T^*_{X,x_0}\otimes E_{x_0}$, we have the following
\begin{align*}
    [i\Theta_{E,h},\Lambda_\omega]u
    &=\Bigl(\sum_{j\in J}+\sum_{j\in K}-\sum_{1\leq j\leq n}\Bigr)c_{jj\lambda\mu}u_{J,K,\lambda}dz_J\wedge d\overline{z}_K\otimes e_\mu\\
    &+\sum_{K\ni j\ne k\notin K}c_{jk\lambda\mu}u_{J,K,\lambda}\varepsilon(j,K)dz_J\wedge d\overline{z}_k\wedge d\overline{z}_{K\setminus j} \otimes e_\mu\\
    &+\sum_{J\ni k\ne j\notin J}c_{jk\lambda\mu}u_{J,K,\lambda}\varepsilon(k,J)dz_j\wedge dz_{J\setminus k}\wedge d\overline{z}_K\otimes e_\mu,\quad and\\
    \langle[i\Theta_{E,h},\Lambda_\omega]u,u\rangle_\omega&=\Bigl(\sum_{j\in J}+\sum_{j\in K}-\sum_{1\leq j\leq n}\Bigr)c_{jj\lambda\mu}u_{J,K,\lambda}\overline{u}_{J,K,\mu}\\
    &+\sum_{j\ne k,K\setminus j=M\setminus k}c_{jk\lambda\mu}u_{J,K,\lambda}\overline{u}_{J,M,\mu}\varepsilon(j,K)\varepsilon(k,M)\\
    &+\sum_{j\ne k,L\setminus j=J\setminus k}c_{jk\lambda\mu}u_{L,K,\lambda}\overline{u}_{J,K,\mu}\varepsilon(k,J)\varepsilon(j,L).
\end{align*}
\end{proposition}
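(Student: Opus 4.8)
The plan is to prove both identities by a direct computation at the distinguished point $x_0$, where the chosen coframe is orthonormal so that every pointwise operator takes its simplest form. First I would record the expression of $\Lambda_\omega$ as the adjoint of the Lefschetz operator $L_\omega=\omega\wedge\cdot$. Since $\omega_{x_0}=i\sum_j dz_j\wedge d\overline{z}_j$ and the pointwise Hermitian adjoint of $dz_j\wedge\cdot$ (resp. $d\overline{z}_j\wedge\cdot$) is the contraction $\frac{\partial}{\partial z_j}\lrcorner$ (resp. $\frac{\partial}{\partial\overline{z}_j}\lrcorner$), one obtains at $x_0$
$$\Lambda_\omega=-i\sum_{s}\frac{\partial}{\partial\overline{z}_s}\lrcorner\,\frac{\partial}{\partial z_s}\lrcorner ,$$
which I would sanity-check against $\Lambda_\omega\omega=n$. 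Because $\Lambda_\omega$ acts trivially on the $E$-factor while $i\Theta_{E,h}$ acts by wedging with the scalar $(1,1)$-forms $dz_j\wedge d\overline{z}_k$ and applying the endomorphism $e_\lambda^*\otimes e_\mu$, the commutator splits as $[i\Theta_{E,h},\Lambda_\omega]=\sum_{j,k,\lambda,\mu}c_{jk\lambda\mu}\,[\,dz_j\wedge d\overline{z}_k\wedge\,,\,\Lambda_\omega\,]\otimes(e_\lambda^*\otimes e_\mu)$, so it suffices to evaluate the scalar commutator on a monomial $dz_J\wedge d\overline{z}_K$ and then reassemble.

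Next I would compute the two compositions separately on $dz_J\wedge d\overline{z}_K$. Applying $\frac{\partial}{\partial z_s}\lrcorner$ and then $\frac{\partial}{\partial\overline{z}_s}\lrcorner$ produces the factor $\varepsilon(s,J)\varepsilon(s,K)$ together with the parity sign $(-1)^{p-1}$ coming from moving the antiholomorphic contraction past the remaining holomorphic part of degree $p-1$; wedging back with $dz_j\wedge d\overline{z}_k$ reintroduces indices and generates further $\varepsilon$-factors and signs. Carrying this out for $(dz_j\wedge d\overline{z}_k\wedge)(\Lambda_\omega\,\cdot)$ and for $\Lambda_\omega\bigl((dz_j\wedge d\overline{z}_k\wedge)\,\cdot\bigr)$ and subtracting, the contributions organize into three groups according to whether indices are merely permuted (the diagonal case, forcing $j=k$) or genuinely exchanged (the off-diagonal cases $j\ne k$).

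The diagonal group, arising when the index removed by a contraction coincides with an index restored by the wedge, collapses to the coefficient $\big(\sum_{j\in J}+\sum_{j\in K}-\sum_{1\le j\le n}\big)c_{jj\lambda\mu}$: the first sum comes from contractions landing on $J$, the second from contractions landing on $K$, and the subtracted full sum from the term of $\Lambda_\omega\,i\Theta_{E,h}$ in which the wedge is applied before the contraction (this is the familiar $[L_\omega,\Lambda_\omega]=p+q-n$ mechanism applied entrywise). The two off-diagonal groups are those in which a contraction deletes an index $j$ of $K$ (resp. of $J$) while the wedge inserts a different index $k\notin K$ (resp. $k\notin J$); tracking the accumulated signs shows they assemble into $\varepsilon(j,K)$ and $\varepsilon(k,J)$ respectively, yielding exactly the second and third displayed lines. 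This establishes the first identity.

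For the second identity I would take the pointwise Hermitian inner product of the first formula with $u=\sum_{L,M,\mu}u_{L,M,\mu}\,dz_L\wedge d\overline{z}_M\otimes e_\mu$ and use that $\{dz_L\wedge d\overline{z}_M\otimes e_\mu\}$ is orthonormal at $x_0$. Orthonormality forces the index-matching conditions: in the second line the surviving antiholomorphic multi-index $d\overline{z}_k\wedge d\overline{z}_{K\setminus j}$ must equal $d\overline{z}_M$, i.e. $K\setminus j=M\setminus k$, and likewise $L\setminus j=J\setminus k$ in the third line, producing the stated three-term quadratic form. The main obstacle throughout is the consistent bookkeeping of the parity signs and the $\varepsilon(s,I)$ factors as contractions are commuted past forms of varying degree; the natural consistency check is that the resulting Hermitian form be real, which is guaranteed by the symmetry $\overline{c}_{jk\lambda\mu}=c_{kj\mu\lambda}$ of the curvature coefficients.
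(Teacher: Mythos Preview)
Your proposal is correct and follows essentially the same route as the paper's own proof: both compute $i\Theta_{E,h}\wedge\Lambda_\omega u$ and $\Lambda_\omega\wedge i\Theta_{E,h}u$ separately at $x_0$ in the orthonormal coframe (using $\Lambda_\omega=-i\sum_s\frac{\partial}{\partial\overline z_s}\lrcorner\,\frac{\partial}{\partial z_s}\lrcorner$), subtract, and organize the result into the diagonal $j=k$ block (where the $[L_\omega,\Lambda_\omega]=p+q-n$ mechanism produces $\sum_{j\in J}+\sum_{j\in K}-\sum_j$) and the two off-diagonal blocks governed by the $\varepsilon$-signs; the inner product formula then follows from orthonormality and the index-matching conditions $K\setminus j=M\setminus k$, $L\setminus j=J\setminus k$. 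The only difference is presentational: the paper carries out the case split $s=j=k$, $s=j\ne k$, $s=k\ne j$, $s\ne j,k$ explicitly and verifies the cancellation of the $s\ne j,k$ terms line by line, whereas you describe this bookkeeping at a higher level.
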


\begin{proof}
    As is well known, we have that
    \begin{align*}
        \Lambda_\omega u=i(-1)^p\sum_{J,K,\lambda,s}u_{J,K,\lambda}\Bigl(\frac{\partial}{\partial z_s}\lrcorner~dz_J\Bigr)\wedge \Bigl(\frac{\partial}{\partial \overline{z}_s}\lrcorner~ d\overline{z}_K\Bigr)\otimes e_\lambda.
    \end{align*}
    Then a simple computation gives
    \begin{align*}
        i\Theta_{E,h}\wedge\Lambda_\omega u&=\sum c_{jk\lambda\mu}u_{J,K,\lambda}dz_j\wedge\Bigl(\frac{\partial}{\partial z_s}\lrcorner~ dz_J\Bigr)\wedge d\overline{z}_k\wedge\Bigl(\frac{\partial}{\partial \overline{z}_s} \lrcorner~ d\overline{z}_K\Bigr) \otimes e_\mu,\\
        \Lambda_\omega\wedge i\Theta_{E,h}u&=\sum c_{jk\lambda\mu}u_{J,K,\lambda}\Bigl(\frac{\partial}{\partial z_s}\lrcorner~ dz_j\wedge dz_J\Bigr)\wedge \Bigl(\frac{\partial}{\partial \overline{z}_s} \lrcorner~ d\overline{z}_k\wedge d\overline{z}_K\Bigr) \otimes e_\mu.
    \end{align*}
    For simplicity, we calculate only the terms in differential form as follows
    \begin{align*}
        \sum dz_j\wedge\Bigl(&\frac{\partial}{\partial z_s}\lrcorner~ dz_J\Bigr)\wedge d\overline{z}_k\wedge\Bigl(\frac{\partial}{\partial \overline{z}_s} \lrcorner~ d\overline{z}_K\Bigr)\\
        =\sum_{s\in J\cap K}\Biggl\{&\sum_{s=j\ne k}dz_J\wedge d\overline{z}_k\wedge\Bigl(\frac{\partial}{\partial \overline{z}_j} \lrcorner~ d\overline{z}_K\Bigr)+\sum_{s=k\ne j}dz_j\wedge\Bigl(\frac{\partial}{\partial z_k}\lrcorner~ dz_J\Bigr)\wedge d\overline{z}_K\\
        +&\sum_{s=j=k}dz_J\wedge d\overline{z}_K+\sum_{s\ne j,k}dz_j\wedge\Bigl(\frac{\partial}{\partial z_s}\lrcorner~ dz_J\Bigr)\wedge d\overline{z}_k\wedge\Bigl(\frac{\partial}{\partial \overline{z}_s} \lrcorner~ d\overline{z}_K\Bigr)\Biggr\},\\
        \sum \Bigl(\frac{\partial}{\partial z_s}\lrcorner~& dz_j\wedge dz_J\Bigr)\wedge \Bigl(\frac{\partial}{\partial \overline{z}_s} \lrcorner~ d\overline{z}_k\wedge d\overline{z}_K\Bigr)\\
        =\sum_{j\notin J,k\notin K}\Biggl\{&\sum_{s=j\ne k}dz_J\wedge \Bigl(\frac{\partial}{\partial \overline{z}_j} \lrcorner~ d\overline{z}_k\wedge d\overline{z}_K\Bigr)+\sum_{s=k\ne j}\Bigl(\frac{\partial}{\partial z_k}\lrcorner~ dz_j\wedge dz_J\Bigr)\wedge d\overline{z}_K\\
        +&\sum_{s=j=k}dz_J\wedge d\overline{z}_K+\sum_{s\ne j,k}\Bigl(\frac{\partial}{\partial z_s}\lrcorner~ dz_j\wedge dz_J\Bigr)\wedge \Bigl(\frac{\partial}{\partial \overline{z}_s} \lrcorner~ d\overline{z}_k\wedge d\overline{z}_K\Bigr)\Biggr\}.
    \end{align*}

    Here if $s\ne j,k$ then we get
    \begin{align*}
        \Bigl(\frac{\partial}{\partial z_s}\lrcorner~ dz_j\wedge dz_J\Bigr)=-dz_j\wedge\Bigl(\frac{\partial}{\partial z_s}\lrcorner~ dz_J\Bigr), ~\Bigl(\frac{\partial}{\partial \overline{z}_s}\lrcorner~ d\overline{z}_k\wedge d\overline{z}_K\Bigr)=-d\overline{z}_k\wedge\Bigl(\frac{\partial}{\partial \overline{z}_s}\lrcorner~ d\overline{z}_K\Bigr). 
    \end{align*}
    From this, we have that
    \begin{align*}
        &\sum_{j\notin J,k\notin K}\sum_{s\ne j,k}\Bigl(\frac{\partial}{\partial z_s}\lrcorner~ dz_j\wedge dz_J\Bigr)\wedge \Bigl(\frac{\partial}{\partial \overline{z}_s} \lrcorner~ d\overline{z}_k\wedge d\overline{z}_K\Bigr)\\
        &=\sum_{j\notin J,k\notin K}\sum_{s\ne j,k}dz_j\wedge\Bigl(\frac{\partial}{\partial z_s}\lrcorner~ dz_J\Bigr)\wedge d\overline{z}_k\wedge\Bigl(\frac{\partial}{\partial \overline{z}_s} \lrcorner~ d\overline{z}_K\Bigr)\\
        &=\sum_{s\in J\cap K}\sum_{s\ne j,k}dz_j\wedge\Bigl(\frac{\partial}{\partial z_s}\lrcorner~ dz_J\Bigr)\wedge d\overline{z}_k\wedge\Bigl(\frac{\partial}{\partial \overline{z}_s} \lrcorner~ d\overline{z}_K\Bigr)
    \end{align*}

    Hence the difference between the two equations is as follows
    \begin{align*}
        \sum\Biggl\{ dz_j\wedge\Bigl(\frac{\partial}{\partial z_s}\lrcorner~ dz_J\Bigr)\wedge d\overline{z}_k\wedge\Bigl(\frac{\partial}{\partial \overline{z}_s} \lrcorner~ d\overline{z}_K\Bigr)-\Bigl(\frac{\partial}{\partial z_s}\lrcorner~ dz_j\wedge dz_J\Bigr)\wedge \Bigl(\frac{\partial}{\partial \overline{z}_s} \lrcorner~ d\overline{z}_k\wedge d\overline{z}_K\Bigr)\Biggr\}
    \end{align*}
    \begin{align*}
        &=\Biggl(\sum_{j=k=s\in J\cap K}+\sum_{j=k=s\notin J\cup K}\Biggr)dz_J\wedge d\overline{z}_K\\
        &\qquad+\sum_{s\in J\cap K}\Biggl\{\sum_{s=j\ne k}dz_J\wedge d\overline{z}_k\wedge\Bigl(\frac{\partial}{\partial \overline{z}_j} \lrcorner~ d\overline{z}_K\Bigr)+\sum_{s=k\ne j}dz_j\wedge\Bigl(\frac{\partial}{\partial z_k}\lrcorner~ dz_J\Bigr)\wedge d\overline{z}_K\Biggr\}\\
        &\qquad-\sum_{j\notin J,k\notin K}\Biggl\{\sum_{s=j\ne k}dz_J\wedge \Bigl(\frac{\partial}{\partial \overline{z}_j} \lrcorner~ d\overline{z}_k\wedge d\overline{z}_K\Bigr)+\sum_{s=k\ne j}\Bigl(\frac{\partial}{\partial z_k}\lrcorner~ dz_j\wedge dz_J\Bigr)\wedge d\overline{z}_K\Biggr\}\\
        &=\Biggl(\sum_{j=k=s\in J\cap K}+\sum_{j=k=s\notin J\cup K}\Biggr)dz_J\wedge d\overline{z}_K\\
        &\qquad+\Biggl(\sum_{s\in J\cap K}\!+\!\sum_{j\notin J,k\notin K}\Biggr)\Biggl\{\sum_{s=j\ne k}dz_J\wedge d\overline{z}_k\wedge\Bigl(\frac{\partial}{\partial \overline{z}_j} \lrcorner~ d\overline{z}_K\Bigr)
        \!+\!\sum_{s=k\ne j}dz_j\wedge\Bigl(\frac{\partial}{\partial z_k}\lrcorner~ dz_J\Bigr)\wedge d\overline{z}_K\Biggr\}\\
        &=\Biggl(\sum_{j=k\in J\cap K}\!\!+\!\!\sum_{j=k\notin J\cup K}\Biggr)dz_J\wedge d\overline{z}_K
        +\Biggl(\sum_{j\in J\cap K}\!+\!\sum_{j\notin J,k\notin K}\Biggr)\!\sum_{j\ne k,k\notin K,j\in K}\!\!dz_J\wedge d\overline{z}_k\wedge\Bigl(\frac{\partial}{\partial \overline{z}_j} \lrcorner~ d\overline{z}_K\Bigr)\\
        &\qquad+\Biggl(\sum_{k\in J\cap K}+\sum_{j\notin J,k\notin K}\Biggr)\sum_{k\ne j,j\notin J,k\in J}dz_j\wedge\Bigl(\frac{\partial}{\partial z_k}\lrcorner~ dz_J\Bigr)\wedge d\overline{z}_K.
    \end{align*}
    We think about the conditions of sigma,
    \begin{align*}
        \Biggl(\sum_{j\in J\cap K}\!+\!\sum_{j\notin J,k\notin K}\Biggr)\!\sum_{j\ne k,k\notin K,j\in K}\!\!&=\!\!\sum_{j\ne k,k\notin K,j\in K}\Biggl(\sum_{j\in J\cap K}+\sum_{j\notin J}\Biggr)
        \!=\!\!\sum_{K\ni j\ne k\notin K}\Biggl(\sum_{j\in J}+\sum_{j\notin J}\Biggr)=\!\!\sum_{K\ni j\ne k\notin K},\\
        \Biggl(\sum_{k\in J\cap K}+\sum_{j\notin J,k\notin K}\Biggr)\sum_{k\ne j,j\notin J,k\in J}&=\sum_{k\ne j,j\notin J,k\in J}\Biggl(\sum_{k\in J\cap K}+\sum_{k\notin K}\Biggr)
        =\!\!\sum_{J\ni k\ne j\notin J}\Biggl(\sum_{k\in K}+\sum_{k\notin K}\Biggr)=\sum_{J\ni k\ne j\notin J}.
    \end{align*}

    Therefore we have that 
    \begin{align*}
        \sum\Biggl\{ dz_j&\wedge\Bigl(\frac{\partial}{\partial z_s}\lrcorner~ dz_J\Bigr)\wedge d\overline{z}_k\wedge\Bigl(\frac{\partial}{\partial \overline{z}_s} \lrcorner~ d\overline{z}_K\Bigr)-\Bigl(\frac{\partial}{\partial z_s}\lrcorner~ dz_j\wedge dz_J\Bigr)\wedge \Bigl(\frac{\partial}{\partial \overline{z}_s} \lrcorner~ d\overline{z}_k\wedge d\overline{z}_K\Bigr)\Biggr\}\\
        &=\Biggl(\sum_{j=k\in J\cap K}+\sum_{j=k\notin J\cup K}\Biggr)dz_J\wedge d\overline{z}_K\\
        &\qquad+\sum_{K\ni j\ne k\notin K}dz_J\wedge d\overline{z}_k\wedge\Bigl(\frac{\partial}{\partial \overline{z}_j} \lrcorner~ d\overline{z}_K\Bigr)+\sum_{J\ni k\ne j\notin J}dz_j\wedge\Bigl(\frac{\partial}{\partial z_k}\lrcorner~ dz_J\Bigr)\wedge d\overline{z}_K\\
        &=\Biggl(\sum_{j=k\in J}+\sum_{j=k\in K}-\sum_{1\leq j=k \leq n}\Biggr)dz_J\wedge d\overline{z}_K\\
        &\qquad+\sum_{K\ni j\ne k\notin K}dz_J\wedge d\overline{z}_k\wedge\Bigl(\frac{\partial}{\partial \overline{z}_j} \lrcorner~ d\overline{z}_K\Bigr)+\sum_{J\ni k\ne j\notin J}dz_j\wedge\Bigl(\frac{\partial}{\partial z_k}\lrcorner~ dz_J\Bigr)\wedge d\overline{z}_K.
    \end{align*}

    From the above, we obtain the first claim as follows:
    \begin{align*}
        [i\Theta_{E,h},\Lambda_\omega]u&=i\Theta_{E,h}\wedge\Lambda_\omega u -\Lambda_\omega\wedge i\Theta_{E,h}u\\
        &=\sum c_{jk\lambda\mu}u_{J,K,\lambda}\Biggl\{ dz_j\wedge\Bigl(\frac{\partial}{\partial z_s}\lrcorner~ dz_J\Bigr)\wedge d\overline{z}_k\wedge\Bigl(\frac{\partial}{\partial \overline{z}_s} \lrcorner~ d\overline{z}_K\Bigr)\\
        &\qquad\qquad\qquad\qquad-\Bigl(\frac{\partial}{\partial z_s}\lrcorner~ dz_j\wedge dz_J\Bigr)\wedge \Bigl(\frac{\partial}{\partial \overline{z}_s} \lrcorner~ d\overline{z}_k\wedge d\overline{z}_K\Bigr)\Biggr\}\otimes e_\mu\\
        &=\Biggl(\sum_{j\in J}+\sum_{j\in K}-\sum_{1\leq j \leq n}\Biggr)c_{jj\lambda\mu}u_{J,K,\lambda}dz_J\wedge d\overline{z}_K\otimes e_\mu\\
        &\qquad+\sum_{K\ni j\ne k\notin K}c_{jk\lambda\mu}u_{J,K,\lambda}dz_J\wedge d\overline{z}_k\wedge\Bigl(\frac{\partial}{\partial \overline{z}_j} \lrcorner~ d\overline{z}_K\Bigr)\otimes e_\mu\\
        &\qquad+\sum_{J\ni k\ne j\notin J}c_{jk\lambda\mu}u_{J,K,\lambda}dz_j\wedge\Bigl(\frac{\partial}{\partial z_k}\lrcorner~ dz_J\Bigr)\wedge d\overline{z}_K\otimes e_\mu\\
        &=\Bigl(\sum_{j\in J}+\sum_{j\in K}-\sum_{1\leq j\leq n}\Bigr)c_{jj\lambda\mu}u_{J,K,\lambda}dz_J\wedge d\overline{z}_K\otimes e_\mu   \tag{\ref{calculate (p,q)-forms}.a} \\
        &\qquad+\sum_{K\ni j\ne k\notin K}c_{jk\lambda\mu}u_{J,K,\lambda}\varepsilon(j,K)dz_J\wedge d\overline{z}_k\wedge d\overline{z}_{K\setminus j} \otimes e_\mu  \tag{\ref{calculate (p,q)-forms}.b}\\
        &\qquad+\sum_{J\ni k\ne j\notin J}c_{jk\lambda\mu}u_{J,K,\lambda}\varepsilon(k,J)dz_j\wedge dz_{J\setminus k}\wedge d\overline{z}_K\otimes e_\mu. \tag{\ref{calculate (p,q)-forms}.c}
    \end{align*}

    We calculate for the equation $\langle[i\Theta_{E,h},\Lambda_\omega]u,u\rangle_\omega$. First,
    \begin{align*}
        \langle(\ref{calculate (p,q)-forms}.\mathrm{a}),u\rangle_\omega&=\langle \Bigl(\sum_{j\in J}+\sum_{j\in K}-\sum_{1\leq j\leq n}\Bigr)c_{jj\lambda\mu}u_{J,K,\lambda}dz_J\wedge d\overline{z}_K\otimes e_\mu\\
        &\qquad\qquad,\sum_{|L|=p,|M|=q,\tau}u_{L,M,\tau} dz_L\wedge d\overline{z}_M\otimes e_\tau \rangle_\omega
    \end{align*}
    \begin{align*}
        &=\langle \Bigl(\sum_{j\in J}+\sum_{j\in K}-\sum_{1\leq j\leq n}\Bigr)c_{jj\lambda\mu}u_{J,K,\lambda}dz_J\wedge d\overline{z}_K,\sum u_{J,K,\mu} dz_J\wedge d\overline{z}_K\rangle_\omega\\
        &=\Bigl(\sum_{j\in J}+\sum_{j\in K}-\sum_{1\leq j\leq n}\Bigr)c_{jj\lambda\mu}u_{J,K,\lambda}\overline{u}_{J,K,\mu},\\
        \langle(\ref{calculate (p,q)-forms}.\mathrm{b}),u\rangle_\omega&=\langle \sum_{K\ni j\ne k\notin K}c_{jk\lambda\mu}u_{J,K,\lambda}\varepsilon(j,K)dz_J\wedge d\overline{z}_k\wedge d\overline{z}_{K\setminus j} \otimes e_\mu,\sum dz_L\wedge d\overline{z}_M\otimes e_\tau \rangle_\omega\\
        &=\langle \sum_{K\ni j\ne k\notin K}c_{jk\lambda\mu}u_{J,K,\lambda}\varepsilon(j,K)dz_J\wedge d\overline{z}_{k,K\setminus j},\sum u_{J,M,\mu} dz_J\wedge d\overline{z}_M\rangle_\omega
    \end{align*}
    Here if $\{k,K\setminus j\}=\{M\}$ as set then we obtain $K\setminus j=M\setminus k$ after also taking into account the order. Therefore we get
    \[
        d\overline{z}_M=d\overline{z}_k\wedge\Bigl(\frac{\partial}{\partial \overline{z}_k} \lrcorner~ d\overline{z}_M\Bigr)=\varepsilon(k,M)d\overline{z}_k\wedge d\overline{z}_{M\setminus k}=\varepsilon(k,M)d\overline{z}_k\wedge d\overline{z}_{K\setminus j}=\varepsilon(k,M)d\overline{z}_{k,K\setminus j}
    \]

    Hence, we have that
    \begin{align*}
        \langle(\ref{calculate (p,q)-forms}.\mathrm{b}),u\rangle_\omega&=\langle \sum_{K\ni j\ne k\notin K}c_{jk\lambda\mu}u_{J,K,\lambda}\varepsilon(j,K)dz_J\wedge d\overline{z}_{k,K\setminus j},\sum_{K\setminus j=M\setminus k} u_{J,M,\mu}\varepsilon(k,M) dz_J\wedge d\overline{z}_{k,K\setminus j}\rangle_\omega\\
        &=\sum_{K\ni j\ne k\notin K,K\setminus j=M\setminus k}c_{jk\lambda\mu}u_{J,K,\lambda}\overline{u}_{J,M,\mu}\varepsilon(j,K)\varepsilon(k,M)\\
        &=\sum_{j\ne k,K\setminus j=M\setminus k}c_{jk\lambda\mu}u_{J,K,\lambda}\overline{u}_{J,M,\mu}\varepsilon(j,K)\varepsilon(k,M),
    \end{align*}
    because if $k\in K$, then $j\ne k$ and $K\setminus j=M\setminus k$ are not satisfied. In a similar way to this equation, we get the following
    \begin{align*}
        \langle(\ref{calculate (p,q)-forms}.\mathrm{c}),u\rangle_\omega&=\sum_{j\ne k,L\setminus j=J\setminus k}c_{jk\lambda\mu}u_{L,K,\lambda}\overline{u}_{J,K,\mu}\varepsilon(k,J)\varepsilon(j,L).
    \end{align*}
    From the above, this proof is completed.
\end{proof}

Let $(X,\omega)$ be a Hermitian manifold and $(E,h)$ be a Hermitian vector bundle over $X$.
We denote the curvature operator $[i\Theta_{E,h},\Lambda_\omega]$ on $\Lambda^{p,q}T^*_X\otimes E$ by $A^{p,q}_{E,h,\omega}$.
For any ordered multi-index $I$, we denote the ordered complementary multi-index of $I$ by $I^C$. Then $\mathrm{sgn}(I,I^C)$ is the signature of the permutation $(1,2,\cdots,n)\to(I,I^C)$.

From Proposition \ref{calculate (p,q)-forms}, we obtain the following theorem, which represents the relation between the positivity and negativity of the curvature operator.

\begin{theorem}\label{(p,q) and (n-q,n-p)}
    Let $(X,\omega)$ be a Hermitian manifold and $(E,h)$ be a Hermitian vector bundle over $X$.
    We have that 
    \[
        A^{p,q}_{E,h,\omega}>0\,\,\, (resp.\, \ge0,\, <0,\, \le0) \iff A^{n-q,n-p}_{E,h,\omega}<0\,\,\, (resp.\, \le0,\, >0,\, \ge0).
    \]
\end{theorem}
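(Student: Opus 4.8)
Since positivity (negativity) of the curvature operator $A^{p,q}_{E,h,\omega}$ means positivity (negativity) of the associated Hermitian form $u\mapsto\langle A^{p,q}_{E,h,\omega}u,u\rangle_\omega$ on each fibre $\Lambda^{p,q}T^*_{X,x_0}\otimes E_{x_0}$, it suffices to establish the claim pointwise at an arbitrary $x_0\in X$, where we may use coordinates and a frame normalized as in Proposition \ref{calculate (p,q)-forms}. The plan is to produce a $\mathbb{C}$-linear isometric isomorphism
\[
\Phi:\Lambda^{p,q}T^*_{X,x_0}\otimes E_{x_0}\longrightarrow\Lambda^{n-q,n-p}T^*_{X,x_0}\otimes E_{x_0}
\]
satisfying
\[
\langle A^{n-q,n-p}_{E,h,\omega}\Phi(u),\Phi(u)\rangle_\omega=-\langle A^{p,q}_{E,h,\omega}u,u\rangle_\omega\qquad\text{for every }u .
\]
Granting this, each of the four sign conditions on the left-hand Hermitian form is carried to the opposite sign condition on the right-hand one, and, $\Phi$ being bijective, all four equivalences of the theorem follow simultaneously.

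First I would define $\Phi$ by complementation of the ordered multi-indices: writing $u=\sum u_{J,K,\lambda}\,dz_J\wedge d\overline z_K\otimes e_\lambda$, set $\Phi(u)_{K^C,J^C,\lambda}=\eta(J,K)\,u_{J,K,\lambda}$, where $\eta(J,K)$ is a unit-modulus scalar assembled from $\mathrm{sgn}(J,J^C)$ and $\mathrm{sgn}(K,K^C)$. Since $|K^C|=n-q$ and $|J^C|=n-p$ the image is an $(n-q,n-p)$-form; as $|\eta(J,K)|=1$, $\Phi$ is an isometry, and because complementation is an involutive bijection on ordered multi-indices, $\Phi$ is bijective. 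Substituting this correspondence into the formula of Proposition \ref{calculate (p,q)-forms} applied to $A^{n-q,n-p}$, the diagonal term is immediate: the coefficient $\sum_{j\in K^C}+\sum_{j\in J^C}-\sum_{1\le j\le n}$ equals $-\big(\sum_{j\in J}+\sum_{j\in K}-\sum_{1\le j\le n}\big)$, while $|\eta(J,K)|=1$ gives $\Phi(u)_{K^C,J^C,\lambda}\,\overline{\Phi(u)}_{K^C,J^C,\mu}=u_{J,K,\lambda}\,\overline u_{J,K,\mu}$, so the diagonal part of the left-hand form is exactly $-1$ times that of the right-hand form.

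For the off-diagonal parts, complementation interchanges the two kinds of cross terms: the sum in the formula for $\Phi(u)$ over two anti-holomorphic indices sharing the common holomorphic index $K^C$ matches the sum in the formula for $u$ over two holomorphic indices $L,J$ sharing the common anti-holomorphic index $K$ (and symmetrically for the other cross term), once $L^C,J^C$ are identified with the two anti-holomorphic indices of $\Phi(u)$. Under this identification the summation constraint passes to its complementary form with the two curvature indices $j,k$ interchanged, so that the coefficient appears as $c_{kj\lambda\mu}$; the Hermitian symmetry $\overline c_{jk\lambda\mu}=c_{kj\mu\lambda}$ together with a relabeling of the summation indices $\lambda,\mu$ then recasts this contribution into the exact shape of the corresponding off-diagonal sum for $u$.

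The main obstacle is exactly the residual sign bookkeeping, namely the identity
\[
\eta(L,K)\,\overline{\eta(J,K)}\,\varepsilon(k,L^C)\,\varepsilon(j,J^C)=-\,\varepsilon(k,J)\,\varepsilon(j,L)
\]
(and its conjugate counterpart), which compares the interior-product signs $\varepsilon$ of an index and of its complement. This is an elementary but delicate consequence of the defining relations $dz_s\wedge dz_{I\setminus s}=\varepsilon(s,I)\,dz_I$ and $dz_I\wedge dz_{I^C}=\mathrm{sgn}(I,I^C)\,dz_{1\cdots n}$, which express $\varepsilon(s,I^C)$ and $\mathrm{sgn}(I\setminus s,(I\setminus s)^C)$ through $\varepsilon(s,I)$ and $\mathrm{sgn}(I,I^C)$. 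Choosing $\eta$ so that these signs collapse to the single global factor $-1$, and then adding the diagonal and the two off-diagonal contributions, yields $\langle A^{n-q,n-p}_{E,h,\omega}\Phi(u),\Phi(u)\rangle_\omega=-\langle A^{p,q}_{E,h,\omega}u,u\rangle_\omega$; the passage from this pointwise quadratic-form identity to the four stated equivalences is then formal.
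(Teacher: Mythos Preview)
Your approach is essentially identical to the paper's: it too defines the complementation map by $\tilde u_{K^C,J^C,\lambda}=\alpha(J)\alpha(K)\,u_{J,K,\lambda}$ with the explicit choice $\eta(J,K)=\alpha(J)\alpha(K)$, $\alpha(I)=(-1)^{|I|}\mathrm{sgn}(I,I^C)$, splits $\langle A^{n-q,n-p}\tilde u,\tilde u\rangle$ via Proposition~\ref{calculate (p,q)-forms} into the diagonal and the two off-diagonal pieces, and reduces the off-diagonal matching to precisely your sign identity $\alpha(J)\alpha(L)\,\varepsilon(j,J^C)\varepsilon(k,L^C)=-\varepsilon(k,J)\varepsilon(j,L)$, which it proves by expanding $\frac{\partial}{\partial z_k}\lrcorner\bigl(\frac{\partial}{\partial z_j}\lrcorner\,dz_N\bigr)=-\frac{\partial}{\partial z_j}\lrcorner\bigl(\frac{\partial}{\partial z_k}\lrcorner\,dz_N\bigr)$ in two ways. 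The only cosmetic difference is that the paper fixes $\eta$ explicitly at the outset and does not invoke the Hermitian symmetry $\overline c_{jk\lambda\mu}=c_{kj\mu\lambda}$ in this step.
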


\begin{proof}
    Let $x_0\in X$ and $(z_1,\ldots,z_n)$ be local coordinates such that $(\partial/\partial z_1,\ldots,\partial/\partial z_n)$ is an orthonormal basis of $(T_X,\omega)$ at $x_0$. Let $(e_1,\ldots,e_r)$ be an orthonormal basis of $E_{x_0}$. We can write
    \begin{align*}
        \omega_{x_0}=i\sum_{1\leq j\leq n}dz_j\wedge d\overline{z}_j,\quad
        i\Theta_{E,h,x_0}=i\sum_{j,k,\lambda,\mu}c_{jk\lambda\mu}dz_j\wedge d\overline{z}_k\otimes e^*_\lambda\otimes e_\mu.
    \end{align*}

    From Proposition \ref{calculate (p,q)-forms}, for any $(p,q)$-form $ u=\sum_{|J|=p,|K|=q,\lambda}u_{J,K,\lambda}dz_J\wedge d\overline{z}_K\otimes e_\lambda\in\Lambda^{p,q}T^*_{X,x_0}\otimes E_{x_0}$ we have that   
    \begin{align*}
        \langle A^{p,q}_{E,h,\omega}u,u\rangle_\omega&=\Bigl(\sum_{j\in J}+\sum_{j\in K}-\sum_{1\leq j\leq n}\Bigr)c_{jj\lambda\mu}u_{J,K,\lambda}\overline{u}_{J,K,\mu}\\
        &+\sum_{j\ne k,K\setminus j=M\setminus k}c_{jk\lambda\mu}u_{J,K,\lambda}\overline{u}_{J,M,\mu}\varepsilon(j,K)\varepsilon(k,M) \tag{\ref{(p,q) and (n-q,n-p)}.I} \\
        &+\sum_{j\ne k,L\setminus j=J\setminus k}c_{jk\lambda\mu}u_{L,K,\lambda}\overline{u}_{J,K,\mu}\varepsilon(k,J)\varepsilon(j,L).
    \end{align*}

    For the above $u$, we take the $(n-q,n-p)$-form
    \begin{align*}
        \tilde{u}:&=\sum_{|J|=p,|K|=q,\lambda}\mathrm{sgn}(J,J^C)\mathrm{sgn}(K,K^C)(-1)^{|J|}(-1)^{|K|} u_{J,K,\lambda}dz_{K^C}\wedge d\overline{z}_{J^C}\otimes e_\lambda\\
        &=\sum_{|J|=p,|K|=q,\lambda}\alpha(J)\alpha(K) u_{J,K,\lambda}dz_{K^C}\wedge d\overline{z}_{J^C}\otimes e_\lambda,
    \end{align*}
    where $\alpha(J)=\mathrm{sgn}(J,J^C)(-1)^{|J|}$. Then from Proposition \ref{calculate (p,q)-forms}, we have that
    \begin{align*}
        A^{n-q,n-p}_{E,h,\omega}\tilde{u}=\Bigl(\sum_{j\in K^C}+\sum_{j\in J^C}-\sum_{1\leq j\leq n}\Bigr)c_{jj\lambda\mu}\alpha(J)\alpha(K)u_{J,K,\lambda}dz_{K^C}\wedge d\overline{z}_{J^C}\otimes e_\mu \qquad\qquad &(\ref{(p,q) and (n-q,n-p)}.a) \\
        +\sum_{J^C\ni j\ne k\notin J^C}c_{jk\lambda\mu}\alpha(J)\alpha(K)u_{J,K,\lambda}\varepsilon(j,J^C)dz_{K^C}\wedge d\overline{z}_k\wedge d\overline{z}_{J^C\setminus j} \otimes e_\mu \qquad & (\ref{(p,q) and (n-q,n-p)}.b)\\
        +\sum_{K^C\ni k\ne j\notin K^C}c_{jk\lambda\mu}\alpha(J)\alpha(K)u_{J,K,\lambda}\varepsilon(k,K^C)dz_j\wedge dz_{K^C\setminus k}\wedge d\overline{z}_{J^C}\otimes e_\mu \,\quad & (\ref{(p,q) and (n-q,n-p)}.c)
    \end{align*}

    We calculate for the equation $\langle A^{n-q,n-p}_{E,h,\omega}\tilde{u},\tilde{u}\rangle_\omega$. First,
    \begin{align*}
        \langle (\ref{(p,q) and (n-q,n-p)}.\mathrm{a}),\tilde{u}\rangle_\omega&=\langle \Bigl(\sum_{j\in K^C}+\sum_{j\in J^C}-\sum_{1\leq j\leq n}\Bigr)c_{jj\lambda\mu}\alpha(J)\alpha(K)u_{J,K,\lambda}dz_{K^C}\wedge d\overline{z}_{J^C}\otimes e_\mu\\
        &\qquad\qquad,\sum_{|L|=p,|M|=q,\tau}\alpha(L)\alpha(M) u_{L,M,\tau}dz_{M^C}\wedge d\overline{z}_{L^C}\otimes e_\tau \rangle\\
        &=\langle \Bigl(\sum_{j\in K^C}+\sum_{j\in J^C}-\sum_{1\leq j\leq n}\Bigr)c_{jj\lambda\mu}\alpha(J)\alpha(K)u_{J,K,\lambda}dz_{K^C}\wedge d\overline{z}_{J^C}\\
        &\qquad\qquad,\sum \alpha(J)\alpha(K)u_{J,K,\mu}dz_{K^C}\wedge d\overline{z}_{J^C} \rangle\\
        &=\Bigl(\sum_{j\in K^C}+\sum_{j\in J^C}-\sum_{1\leq j\leq n}\Bigr)c_{jj\lambda\mu}\alpha(J)^2\alpha(K)^2u_{J,K,\lambda}\overline{u}_{J,K,\mu}\\
        &=\Bigl(\sum_{1\leq j\leq n}-\sum_{j\in K}+\sum_{1\leq j\leq n}-\sum_{j\in J}-\sum_{1\leq j\leq n}\Bigr)c_{jj\lambda\mu}u_{J,K,\lambda}\overline{u}_{J,K,\mu}\\
        &=-\Bigl(\sum_{j\in J}+\sum_{j\in K}-\sum_{1\leq j\leq n}\Bigr)c_{jj\lambda\mu}u_{J,K,\lambda}\overline{u}_{J,K,\mu}  \tag{\ref{(p,q) and (n-q,n-p)}.II}
    \end{align*}
    \begin{align*}
        \langle (\ref{(p,q) and (n-q,n-p)}.\mathrm{b}),\tilde{u}\rangle_\omega&=\langle \sum_{J^C\ni j\ne k\notin J^C}c_{jk\lambda\mu}\alpha(J)\alpha(K)u_{J,K,\lambda}\varepsilon(j,J^C)dz_{K^C}\wedge d\overline{z}_k\wedge d\overline{z}_{J^C\setminus j} \otimes e_\mu\\
        &\qquad\qquad,\sum_{|L|=p,|M|=q,\tau}\alpha(L)\alpha(M) u_{L,M,\tau}dz_{M^C}\wedge d\overline{z}_{L^C}\otimes e_\tau \rangle\\
        &=\langle \sum_{J^C\ni j\ne k\notin J^C}c_{jk\lambda\mu}\alpha(J)\alpha(K)u_{J,K,\lambda}\varepsilon(j,J^C)dz_{K^C}\wedge d\overline{z}_k\wedge d\overline{z}_{J^C\setminus j}\\
        &\qquad\qquad,\sum\alpha(L)\alpha(K) u_{L,K,\mu}dz_{K^C}\wedge d\overline{z}_{L^C} \rangle
    \end{align*}
    Here, in the same way as in Proposition \ref{calculate (p,q)-forms}, if $\{k,J^C\setminus j\}=\{L^C\}$ as set then we get $J^C\setminus j=L^C\setminus k$ and $d\overline{z}_{L^C}=\varepsilon(k,L^C)d\overline{z}_{k,J^C\setminus j}$. Hence, we have that 
    \begin{align*}
        \langle (\ref{(p,q) and (n-q,n-p)}.\mathrm{b}),\tilde{u}\rangle_\omega&=\langle \sum_{J^C\ni j\ne k\notin J^C}c_{jk\lambda\mu}\alpha(J)\alpha(K)u_{J,K,\lambda}\varepsilon(j,J^C)dz_{K^C}\wedge d\overline{z}_{k,J^C\setminus j}\\
        &\qquad\qquad,\sum_{J^C\setminus j=L^C\setminus k}\alpha(L)\alpha(K) u_{L,K,\mu}\varepsilon(k,L^C)dz_{K^C}\wedge d\overline{z}_{k,J^C\setminus j} \rangle\\
        &=\sum_{J^C\ni j\ne k\notin J^C,J^C\setminus j=L^C\setminus k}c_{jk\lambda\mu}\alpha(J)\alpha(L)\alpha(K)^2u_{J,K,\lambda}\overline{u}_{L,K,\mu}\varepsilon(j,J^C)\varepsilon(k,L^C)\\
        &=\sum_{j\ne k,J^C\setminus j=L^C\setminus k}c_{jk\lambda\mu}\alpha(J)\alpha(L)u_{J,K,\lambda}\overline{u}_{L,K,\mu}\varepsilon(j,J^C)\varepsilon(k,L^C)\\
        &=\sum_{j\ne k,J\setminus k=L\setminus j}c_{jk\lambda\mu}\alpha(J)\alpha(L)u_{J,K,\lambda}\overline{u}_{L,K,\mu}\varepsilon(j,J^C)\varepsilon(k,L^C)
    \end{align*}

    Therefore we prove that if $j\ne k$ and $J\setminus k=L\setminus j$, i.e. $J^C\setminus j=L^C\setminus k$ then 
    \[-\varepsilon(k,J)\varepsilon(j,L)=\alpha(J)\alpha(L)\varepsilon(j,J^C)\varepsilon(k,L^C).\]
    From $j\notin J$, i.e. $j\in J^C$, we have that 
    \begin{align*}
        \frac{\partial}{\partial z_j}\lrcorner~ dz_N&=\mathrm{sgn}(J,J^C)\frac{\partial}{\partial z_j}\lrcorner~ (dz_J\wedge dz_{J^C})\\
        &=\mathrm{sgn}(J,J^C)(-1)^{|J|}dz_J\wedge \Bigl( \frac{\partial}{\partial z_j}\lrcorner~ dz_{J^C}\Bigr)\\
        &=\mathrm{sgn}(J,J^C)(-1)^{|J|}\varepsilon(j,J^C)dz_J\wedge dz_{J^C\setminus j}.
    \end{align*}
    Since $k\in J$, we get
    \begin{align*}
        \frac{\partial}{\partial z_k}\lrcorner~ \Bigl(\frac{\partial}{\partial z_j}\lrcorner~ dz_N\Bigr)&=\mathrm{sgn}(J,J^C)(-1)^{|J|}\varepsilon(j,J^C)\frac{\partial}{\partial z_k}\lrcorner~ \Bigl(dz_J\wedge dz_{J^C\setminus j}\Bigr)\\
        &=\mathrm{sgn}(J,J^C)(-1)^{|J|}\varepsilon(j,J^C)\varepsilon(k,J)dz_{J\setminus k}\wedge dz_{J^C\setminus j}.
    \end{align*}
    In a similar way to this equation, we get the following
    \begin{align*}
        \frac{\partial}{\partial z_j}\lrcorner~ \Bigl(\frac{\partial}{\partial z_k}\lrcorner~ dz_N\Bigr)=\mathrm{sgn}(L,L^C)(-1)^{|L|}\varepsilon(k,L^C)\varepsilon(j,L)dz_{L\setminus j}\wedge dz_{L^C\setminus k}.
    \end{align*}

    From the assumption, we have that 
    \begin{align*}
        \mathrm{sgn}(J,J^C)(-1)^{|J|}&\varepsilon(j,J^C)\varepsilon(k,J)dz_{J\setminus k}\wedge dz_{J^C\setminus j}=\frac{\partial}{\partial z_k}\lrcorner~ \Bigl(\frac{\partial}{\partial z_j}\lrcorner~ dz_N\Bigr)\\
        &=-\frac{\partial}{\partial z_j}\lrcorner~ \Bigl(\frac{\partial}{\partial z_k}\lrcorner~ dz_N\Bigr)\\
        &=-\mathrm{sgn}(L,L^C)(-1)^{|L|}\varepsilon(k,L^C)\varepsilon(j,L)dz_{L\setminus j}\wedge dz_{L^C\setminus k}\\
        &=-\mathrm{sgn}(L,L^C)(-1)^{|L|}\varepsilon(k,L^C)\varepsilon(j,L)dz_{J\setminus k}\wedge dz_{J^C\setminus j}.
    \end{align*}
    Thus, we have shown the following
    \begin{align*}
        \mathrm{sgn}(J,J^C)(-1)^{|J|}\varepsilon(j,J^C)\varepsilon(k,J)&=-\mathrm{sgn}(L,L^C)(-1)^{|L|}\varepsilon(k,L^C)\varepsilon(j,L)\\
        \iff \,\,\,\, -\varepsilon(k,J)\varepsilon(j,L)&=\mathrm{sgn}(J,J^C)(-1)^{|J|}\mathrm{sgn}(L,L^C)(-1)^{|L|}\varepsilon(j,J^C)\varepsilon(k,L^C)\\
        &=\alpha(J)\alpha(L)\varepsilon(j,J^C)\varepsilon(k,L^C).
    \end{align*}

    Hence, we have that 
    \begin{align*}
        \langle (\ref{(p,q) and (n-q,n-p)}.\mathrm{b}),\tilde{u}\rangle_\omega&=\sum_{j\ne k,J\setminus k=L\setminus j}c_{jk\lambda\mu}\alpha(J)\alpha(L)u_{J,K,\lambda}\overline{u}_{L,K,\mu}\varepsilon(j,J^C)\varepsilon(k,L^C)\\
        &=-\sum_{j\ne k,J\setminus k=L\setminus j}c_{jk\lambda\mu}u_{J,K,\lambda}\overline{u}_{L,K,\mu}\varepsilon(k,J)\varepsilon(j,L). \tag{\ref{(p,q) and (n-q,n-p)}.III}
    \end{align*}
    In a similar way to this equation, we get
    \begin{align*}
        \langle (\ref{(p,q) and (n-q,n-p)}.\mathrm{c}),\tilde{u}\rangle_\omega=-\sum_{j\ne k,K\setminus j=M\setminus k}c_{jk\lambda\mu}u_{J,K,\lambda}\overline{u}_{J,M,\mu}\varepsilon(j,K)\varepsilon(k,M). \tag{\ref{(p,q) and (n-q,n-p)}.IV}
    \end{align*}

    By the above conditions (\ref{(p,q) and (n-q,n-p)}.I)-(\ref{(p,q) and (n-q,n-p)}.IV), we have that for any $(p,q)$-form $u\in\Lambda^{p,q}T^*_{X,x_0}\otimes E_{x_0}$, there exists a $(n-q,n-p)$-form $\tilde{u}\in\Lambda^{n-q,n-p}T^*_{X,x_0}\otimes E_{x_0}$ such that
    \[\langle A^{p,q}_{E,h,\omega}u,u\rangle_\omega=-\langle A^{n-q,n-p}_{E,h,\omega}\tilde{u},\tilde{u}\rangle_\omega.\] 
    Therefore we obtain that $A^{n-q,n-p}_{E,h,\omega}>0 \Longrightarrow A^{p,q}_{E,h,\omega}<0$. And the other claim can also be shown in the same way.
\end{proof}

We consider the relationship between the curvature operator and the Hodge-star operator.
Let $(X,\omega)$ be a Hermitian manifold and $E$ be a holomorphic vector bundle over $X$ equipped with a smooth Hermitian metric $h$.
We denote by $L^2_{p,q}(X,E,h,\omega)$ the Hilbert space of $E$-valued $(p,q)$-forms $u$ which satisfy
\[
    ||u||^2_{h,\omega}=\int_X|u|^2_{h,\omega}dV_\omega<+\infty.
\]
Let $\ast_E$ be the Hodge-star operator $L^2_{p,q}(X,E,h,\omega)\rightarrow L^2_{n-p,n-q}(X,E^\ast\!,h^\ast\!,\omega)$ as in \cite{L^2 Serre}, \cite{Dem-book}
and $\overline{\partial}^\ast_E$ be the Hilbert space adjoint to $\overline{\partial}_E:L^2_{p,q}(X,E,h,\omega)\rightarrow L^2_{p,q+1}(X,E,h,\omega)$.
Let $i\Theta_{E,h}$ be the Chern curvature tensor of $(E,h)$ and $\Lambda_\omega$ be the adjoint of multiplication of $\omega$.

\begin{lemma}\label{calculate Hodge *}
    Let $(X,\omega)$ be a Hermitian manifold and $(E,h)$ be a holomorphic Hermitian vector bundle over $X$.
    Let $x_0\in X$ and $(U,(z_1,\ldots,z_n))$ be local coordinates such that $(\partial/\partial z_1,\ldots,\partial/\partial z_n)$ is an orthonormal basis of $(T_X,\omega)$ at $x_0$.
    Then for any $(p,q)$-form $u=\sum_{|J|=p,|K|=q,\lambda}u_{J,K,\lambda}dz_J\wedge d\overline{z}_K\otimes e_\lambda\in\Lambda^{p,q}T^*_{X,x_0}\otimes E_{x_0}$, we have that
    \begin{align*}
        \ast_Eu=\sum_{|J|=p,|K|=q,\lambda}\overline{u}_{J,K,\lambda}C_{J,K}dz_{J^C}\wedge d\overline{z}_{K^C}\otimes e_\lambda^*\in\Lambda^{n-q,n-p}T^*_{X,x_0}\otimes E^*_{x_0},
    \end{align*}
    where $C_{J,K}=i^{n^2}(-1)^{q(n-p)}\mathrm{sgn}(J,J^C)\mathrm{sgn}(K,K^C)$.
\end{lemma}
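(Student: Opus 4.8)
The plan is to avoid any explicit global formula and instead use the characterizing duality property of the Hodge-star operator, reducing everything to a single pointwise computation at $x_0$ against the flat model $\omega_{x_0}=i\sum_j dz_j\wedge d\overline{z}_j$. Recall that $\ast_E$ is the unique conjugate-linear operator such that, for $E$-valued $(p,q)$-forms $u,v$ of the same bidegree, $u\wedge\ast_E v=\langle u,v\rangle_{h,\omega}\,dV_\omega$, where the wedge incorporates the canonical fibre pairing $E\otimes E^*\to\mathbb{C}$. Since the monomials $dz_J\wedge d\overline{z}_K\otimes e_\lambda$ with $|J|=p,\ |K|=q$ form an orthonormal basis of $\Lambda^{p,q}T^*_{X,x_0}\otimes E_{x_0}$ and $\ast_E$ is conjugate-linear, it suffices to evaluate $\ast_E$ on one such monomial; this is exactly what produces the conjugation $u_{J,K,\lambda}\mapsto\overline{u}_{J,K,\lambda}$ in the statement.

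First I would isolate the bundle factor. Because $(e_\lambda)$ is $h$-orthonormal at $x_0$, the conjugate-linear metric isomorphism $E_{x_0}\to E^*_{x_0}$ sends $e_\lambda\mapsto e_\lambda^*$, so $\ast_E(dz_J\wedge d\overline{z}_K\otimes e_\lambda)=\big(\ast(dz_J\wedge d\overline{z}_K)\big)\otimes e_\lambda^*$ and only a scalar Hodge-star computation remains. For the scalar part, the target bidegree $(n-p,n-q)$ together with the requirement that the wedge land in $\Lambda^{n,n}$ forces $\ast(dz_J\wedge d\overline{z}_K)=C_{J,K}\,dz_{J^C}\wedge d\overline{z}_{K^C}$ for a single scalar $C_{J,K}$, since this is the only monomial whose wedge with $dz_J\wedge d\overline{z}_K$ is nonzero. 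The constant is then pinned down by pairing the monomial with itself and using $\langle dz_J\wedge d\overline{z}_K,\,dz_J\wedge d\overline{z}_K\rangle_\omega=1$, which gives the single scalar equation
\[
    C_{J,K}\,dz_J\wedge d\overline{z}_K\wedge dz_{J^C}\wedge d\overline{z}_{K^C}=dV_\omega.
\]

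The remaining work is the evaluation of the two sides of this identity as signed multiples of $dz_1\wedge\cdots\wedge dz_n\wedge d\overline{z}_1\wedge\cdots\wedge d\overline{z}_n$. On the left I would move the $n-p$ one-forms of $dz_{J^C}$ past the $q$ one-forms of $d\overline{z}_K$, producing the sign $(-1)^{q(n-p)}$, and then reorder $dz_J\wedge dz_{J^C}$ and $d\overline{z}_K\wedge d\overline{z}_{K^C}$ into natural order, producing $\mathrm{sgn}(J,J^C)$ and $\mathrm{sgn}(K,K^C)$. On the right I would expand $dV_\omega=\omega^n/n!=i^n\prod_j(dz_j\wedge d\overline{z}_j)$ and apply the shuffle identity $\prod_j(dz_j\wedge d\overline{z}_j)=(-1)^{n(n-1)/2}\,dz_1\wedge\cdots\wedge dz_n\wedge d\overline{z}_1\wedge\cdots\wedge d\overline{z}_n$, together with $i^n(-1)^{n(n-1)/2}=i^{n^2}$, to obtain $dV_\omega=i^{n^2}\,dz_1\wedge\cdots\wedge dz_n\wedge d\overline{z}_1\wedge\cdots\wedge d\overline{z}_n$. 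Comparing coefficients, and using that $(-1)^{q(n-p)}$ and the two signatures are their own inverses, yields $C_{J,K}=i^{n^2}(-1)^{q(n-p)}\mathrm{sgn}(J,J^C)\mathrm{sgn}(K,K^C)$, as claimed.

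I expect the only delicate point to be this sign bookkeeping, in particular correctly counting the transpositions behind $(-1)^{q(n-p)}$ and verifying the collapse $i^n(-1)^{n(n-1)/2}=i^{n^2}$; everything else is forced by conjugate-linearity and the orthonormality of the chosen frames.
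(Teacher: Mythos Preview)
Your proposal is correct and follows essentially the same approach as the paper: both reduce by conjugate-linearity to a single monomial $dz_J\wedge d\overline{z}_K\otimes e_\lambda$, posit $\ast_E$ of it as $C_{J,K}\,dz_{J^C}\wedge d\overline{z}_{K^C}\otimes e_\lambda^*$, and determine $C_{J,K}$ by plugging into the defining identity $\alpha\wedge\ast_E\alpha=|\alpha|^2\,dV_\omega$ together with $dV_\omega=i^{n^2}dz_N\wedge d\overline{z}_N$. Your write-up is slightly more explicit than the paper's in justifying why only the complementary monomial can appear and in deriving $i^n(-1)^{n(n-1)/2}=i^{n^2}$, but the argument is the same.
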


\begin{proof}
    For this lemma, it suffices to show the following 
    \[
        \ast_E(dz_J\wedge d\overline{z}_K\otimes e_\lambda)=i^{n^2}(-1)^{q(n-p)}\mathrm{sgn}(J,J^C)\mathrm{sgn}(K,K^C)dz_{J^C}\wedge d\overline{z}_{K^C}\otimes e_\lambda^*.
    \]
    Here the Hodge-star operator $\ast_E:\mathcal{E}^{p,q}(X,E,h,\omega)\to\mathcal{E}^{n-p,n-q}(X,E^*,h^*,\omega)$ defined by 
    \[
        \langle \alpha,\beta\rangle_{h,\omega} dV_\omega=\alpha\wedge \ast_E\beta.
    \]

    Therefore we define the constant number $C_{J,K}$ by 
    \[
        \ast_E(dz_J\wedge d\overline{z}_K\otimes e_\lambda)=C_{J,K}dz_{J^C}\wedge d\overline{z}_{K^C}\otimes e_\lambda^*.
    \] 
    From these definitions, we have that 
    \begin{align*}
        i^{n^2}dz_N\wedge d\overline{z}_N&=idz_1\wedge d\overline{z}_1\wedge \cdots \wedge i dz_n\wedge d\overline{z}_n=\frac{\omega^n}{n!}=dV_\omega\\
        &=\langle dz_J\wedge d\overline{z}_K\otimes e_\lambda,dz_J\wedge d\overline{z}_K\otimes e_\lambda \rangle_{h,\omega} dV_\omega\\
        &=dz_J\wedge d\overline{z}_K\otimes e_\lambda \wedge \ast_E(dz_J\wedge d\overline{z}_K\otimes e_\lambda)\\
        &=C_{J,K}dz_J\wedge d\overline{z}_K\wedge dz_{J^C}\wedge d\overline{z}_{K^C}\\
        &=C_{J,K}(-1)^{q(n-p)}dz_J\wedge dz_{J^C}\wedge d\overline{z}_K\wedge d\overline{z}_{K^C}\\
        &=C_{J,K}(-1)^{q(n-p)}\mathrm{sgn}(J,J^C)\mathrm{sgn}(K,K^C)dz_N\wedge d\overline{z}_N.
    \end{align*}

    Hence, we obtain that
    \begin{align*}
        i^{n^2}&=C_{J,K}(-1)^{q(n-p)}\mathrm{sgn}(J,J^C)\mathrm{sgn}(K,K^C),\\
        ~\mathrm{i.e.}~\,C_{J,K}&=i^{n^2}(-1)^{q(n-p)}\mathrm{sgn}(J,J^C)\mathrm{sgn}(K,K^C).
    \end{align*}
    From the above, this proof is completed.
\end{proof}

Using Proposition \ref{calculate (p,q)-forms} and Lemma \ref{calculate Hodge *}, we obtain the following theorem, which shows the relation between the curvature operator and the Hodge-star operator.

\begin{theorem}\label{A_E and A_E^*}
    Let $(X,\omega)$ be a Hermitian manifold and $(E,h)$ be a holomorphic Hermitian vector bundle over $X$.
    Then we have that
    \begin{align*}
        \ast_E[i\Theta_{E,h},\Lambda_{\omega}]= [i\Theta_{E^*,h^*},\Lambda_{\omega}]\ast_E,~ \mathrm{i.e.}~ \ast_EA^{p,q}_{E,h,\omega}=A^{n-p,n-q}_{E^*,h^*,\omega}\ast_E,
    \end{align*}
    and for any $(p,q)$-form $u\in \mathcal{E}^{p,q}(X,E)$ we have that 
    \begin{align*}
        \langle[i\Theta_{E,h},\Lambda_{\omega}]u,u\rangle_{h,\omega}&=\langle[i\Theta_{E^*\!,h^*},\Lambda_{\omega}]\ast_Eu,\ast_Eu\rangle_{h^*\!,\omega}, ~and\\
        \langle[i\Theta_{E,h},\Lambda_{\omega}]^{-1}u,u\rangle_{h,\omega}&=\langle[i\Theta_{E^*\!,h^*},\Lambda_{\omega}]^{-1}\ast_Eu,\ast_Eu\rangle_{h^*\!,\omega}.
    \end{align*}

    Furthermore, it follows that
    \begin{align*}
        A^{p,q}_{E,h,\omega}>0 \,\,\, (resp.\, \ge0,\, <0,\, \le0) \iff & A^{n-p,n-q}_{E^*,h^*,\omega}>0 \,\,\, (resp.\, \ge0,\, <0,\, \le0).
    \end{align*}
\end{theorem}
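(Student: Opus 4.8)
The plan is to reduce all four assertions to the single operator identity $\ast_E A^{p,q}_{E,h,\omega}=A^{n-p,n-q}_{E^*,h^*,\omega}\ast_E$, from which the two scalar identities and the positivity equivalence follow formally. The one geometric input I would record first is the curvature of the dual bundle: with respect to the dual orthonormal frame $(e_1^*,\ldots,e_r^*)$ one has $i\Theta_{E^*,h^*,x_0}=-i\sum_{j,k,\lambda,\mu}c_{jk\mu\lambda}\,dz_j\wedge d\overline z_k\otimes e_\lambda\otimes e_\mu^*$, that is, the curvature coefficients of $(E^*,h^*)$ are $c^*_{jk\lambda\mu}=-c_{jk\mu\lambda}$. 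This is the only place where the duality of the metric enters.

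To prove the operator identity I would evaluate both composites on a general form $u=\sum u_{J,K,\lambda}\,dz_J\wedge d\overline z_K\otimes e_\lambda$ and compare term by term. On the one hand, apply Proposition \ref{calculate (p,q)-forms} to produce the three pieces (\ref{calculate (p,q)-forms}.a)--(\ref{calculate (p,q)-forms}.c) of $A^{p,q}_{E,h,\omega}u$ and then apply $\ast_E$ through Lemma \ref{calculate Hodge *}; since $\ast_E$ is conjugate-linear this conjugates every scalar, replaces $dz_J\wedge d\overline z_K$ by $C_{J,K}\,dz_{J^C}\wedge d\overline z_{K^C}$, and, through $\overline c_{jk\lambda\mu}=c_{kj\mu\lambda}$ together with $c^*_{jk\lambda\mu}=-c_{jk\mu\lambda}$, rearranges the coefficients into those of the $E^*$-operator. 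On the other hand, apply $\ast_E$ to $u$ first and feed the resulting $E^*$-valued $(n-p,n-q)$-form into Proposition \ref{calculate (p,q)-forms}, now with the coefficients $c^*_{jk\lambda\mu}$. The diagonal pieces match once one rewrites $\sum_{j\in J^C}+\sum_{j\in K^C}-\sum_{1\le j\le n}=-\bigl(\sum_{j\in J}+\sum_{j\in K}-\sum_{1\le j\le n}\bigr)$, the sign from this rewriting cancelling the sign in $c^*=-c$; the two mixed pieces match using the $\varepsilon$- and $\mathrm{sgn}$-identities on complementary multi-indices. I expect this sign-and-index bookkeeping to be the only real obstacle, but it is entirely parallel to the computation already carried out for Theorem \ref{(p,q) and (n-q,n-p)}, so I would invoke the relation $-\varepsilon(k,J)\varepsilon(j,L)=\alpha(J)\alpha(L)\varepsilon(j,J^C)\varepsilon(k,L^C)$ established there (with $\alpha(J)=\mathrm{sgn}(J,J^C)(-1)^{|J|}$) rather than redo it. The decisive structural point is that here the complement flip and the dual-curvature sign cancel, whereas in Theorem \ref{(p,q) and (n-q,n-p)} only the former occurred; this is why passing to $E^*$ preserves the sign while the internal duality reversed it.

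Granting the operator identity, the two scalar identities are immediate. Since $\ast_E$ is a conjugate-linear isometry of $L^2_{p,q}(X,E,h,\omega)$ onto $L^2_{n-p,n-q}(X,E^*,h^*,\omega)$, one has $\langle\ast_E a,\ast_E b\rangle_{h^*,\omega}=\overline{\langle a,b\rangle_{h,\omega}}$, and the curvature operator is self-adjoint, so $\langle A^{p,q}_{E,h,\omega}u,u\rangle_{h,\omega}$ is real (equivalently, the expression in Proposition \ref{calculate (p,q)-forms} is real by $\overline c_{jk\lambda\mu}=c_{kj\mu\lambda}$). Taking $a=A^{p,q}_{E,h,\omega}u$, $b=u$ and inserting the operator identity gives $\langle A^{n-p,n-q}_{E^*,h^*,\omega}\ast_E u,\ast_E u\rangle_{h^*,\omega}=\langle\ast_E A^{p,q}_{E,h,\omega}u,\ast_E u\rangle_{h^*,\omega}=\overline{\langle A^{p,q}_{E,h,\omega}u,u\rangle_{h,\omega}}=\langle A^{p,q}_{E,h,\omega}u,u\rangle_{h,\omega}$. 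For the inverse I would invert the operator identity wherever both operators are invertible, obtaining $\ast_E(A^{p,q}_{E,h,\omega})^{-1}=(A^{n-p,n-q}_{E^*,h^*,\omega})^{-1}\ast_E$, and then repeat the same two lines verbatim.

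Finally the positivity equivalence is formal: $\ast_E$ is a bijection from the $(p,q)$-forms valued in $E$ onto the $(n-p,n-q)$-forms valued in $E^*$, and the two quadratic forms agree under it, so $A^{p,q}_{E,h,\omega}$ and $A^{n-p,n-q}_{E^*,h^*,\omega}$ have matching sign in each of the four cases $>0,\ \ge0,\ <0,\ \le0$.
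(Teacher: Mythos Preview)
Your proposal is correct and follows essentially the same route as the paper: both compute $\ast_E A^{p,q}_{E,h,\omega}u$ and $A^{n-p,n-q}_{E^*,h^*,\omega}\ast_E u$ via Proposition~\ref{calculate (p,q)-forms} and Lemma~\ref{calculate Hodge *}, match the diagonal pieces through the complement-sum identity, and match the off-diagonal pieces through the same $\varepsilon/\mathrm{sgn}$ bookkeeping (the paper re-derives the needed sign relation inline rather than citing Theorem~\ref{(p,q) and (n-q,n-p)}, but the content is the same). The only cosmetic difference is that you deduce the scalar identity from the conjugate-linear isometry property of $\ast_E$, whereas the paper unwinds it directly through the wedge-product definition of $\ast_E$; both arguments are equivalent.
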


\begin{proof}
    Let $x_0\in X$ and $(z_1,\ldots,z_n)$ be local coordinates such that $(\partial/\partial z_1,\ldots,\partial/\partial z_n)$ is an orthonormal basis of $(T_X,\omega)$ at $x_0$. 
    Let $(e_1,\ldots,e_r)$ be an orthonormal basis of $E_{x_0}$. We can write $\omega_{x_0}=i\sum_{1\leq j\leq n}dz_j\wedge d\overline{z}_j$ and 
    \begin{align*}
        i\Theta_{E,h,x_0}&=i\sum_{j,k,\lambda,\mu}c_{jk\lambda\mu}dz_j\wedge d\overline{z}_k\otimes e^*_\lambda\otimes e_\mu,\\
        i\Theta_{E^*,h^*,x_0}=-i\Theta_{E,h,x_0}^\dagger&=-i\sum_{j,k,\lambda,\mu}c_{jk\mu\lambda}dz_j\wedge d\overline{z}_k\otimes (e^*_\lambda)^*\otimes e^*_\mu\\
        &=-i\sum_{j,k,\lambda,\mu}c_{jk\lambda\mu}dz_j\wedge d\overline{z}_k\otimes (e^*_\mu)^*\otimes e^*_\lambda.
    \end{align*}

    Let $\displaystyle  u=\sum_{|J|=p,|K|=q,\lambda}u_{J,K,\lambda}dz_J\wedge d\overline{z}_K\otimes e_\lambda\in\Lambda^{p,q}T^*_{X,x_0}\otimes E_{x_0}$.
    By Lemma \ref{calculate Hodge *}, we have
    \begin{align*}
        \ast_Eu=\sum_{|J|=p,|K|=q,\lambda}\overline{u}_{J,K,\lambda}C_{J,K}dz_{J^C}\wedge d\overline{z}_{K^C}\otimes e_\lambda^*\in\Lambda^{n-p,n-q}T^*_{X,x_0}\otimes E^*_{x_0}.
    \end{align*}
    Then since $\overline{c}_{jk\lambda\mu}=c_{kj\mu\lambda}$ and Proposition \ref{calculate (p,q)-forms}, we have that 
    \begin{align*}
        \ast_E[i\Theta_{E,h},\Lambda_{\omega}]u=&\ast_E\Bigl\{\Bigl(\sum_{j\in J}+\sum_{j\in K}-\sum_{1\leq j\leq n}\Bigr)c_{jj\lambda\mu}u_{J,K,\lambda}dz_J\wedge d\overline{z}_K\otimes e_\mu\\
        &+\sum_{K\ni j\ne k\notin K}c_{jk\lambda\mu}u_{J,K,\lambda}\varepsilon(j,K)dz_J\wedge d\overline{z}_{k,K\setminus j} \otimes e_\mu\\
        &+\sum_{J\ni k\ne j\notin J}c_{jk\lambda\mu}u_{J,K,\lambda}\varepsilon(k,J) dz_{j,J\setminus k}\wedge d\overline{z}_K\otimes e_\mu\Bigr\}\\
        =&\Bigl(\sum_{j\in J}+\sum_{j\in K}-\sum_{1\leq j\leq n}\Bigr)\overline{c}_{jj\lambda\mu}\overline{u}_{J,K,\lambda}C_{J,K}dz_{J^C}\wedge d\overline{z}_{K^C}\otimes e^*_\mu\\
        &+\sum_{K\ni j\ne k\notin K}\overline{c}_{jk\lambda\mu}\overline{u}_{J,K,\lambda}C_{J,(k,K\setminus j)}\varepsilon(j,K)dz_{J^C}\wedge d\overline{z}_{(k,K\setminus j)^C} \otimes e^*_\mu\\
        &+\sum_{J\ni k\ne j\notin J}\overline{c}_{jk\lambda\mu}\overline{u}_{J,K,\lambda}C_{(j,J\setminus k),K}\varepsilon(k,J) dz_{(j,J\setminus k)^C}\wedge d\overline{z}_{K^C}\otimes e^*_\mu\\
        =&\Bigl(\sum_{j\in J}+\sum_{j\in K}-\sum_{1\leq j\leq n}\Bigr)c_{jj\mu\lambda}\overline{u}_{J,K,\lambda}C_{J,K}dz_{J^C}\wedge d\overline{z}_{K^C}\otimes e^*_\mu \\
        &+\sum_{K\ni j\ne k\notin K}c_{kj\mu\lambda}\overline{u}_{J,K,\lambda}C_{J,(k,K\setminus j)}\varepsilon(j,K)dz_{J^C}\wedge d\overline{z}_{(k,K\setminus j)^C} \otimes e^*_\mu \\
        &+\sum_{J\ni k\ne j\notin J}c_{kj\mu\lambda}\overline{u}_{J,K,\lambda}C_{(j,J\setminus k),K}\varepsilon(k,J) dz_{(j,J\setminus k)^C}\wedge d\overline{z}_{K^C}\otimes e^*_\mu
    \end{align*}
    \begin{align*}
        =&\Bigl(\sum_{j\in J}+\sum_{j\in K}-\sum_{1\leq j\leq n}\Bigr)c_{jj\mu\lambda}\overline{u}_{J,K,\lambda}C_{J,K}dz_{J^C}\wedge d\overline{z}_{K^C}\otimes e^*_\mu \tag{\ref{A_E and A_E^*}.a}\\
        &+\sum_{K\ni k\ne j\notin K}c_{jk\mu\lambda}\overline{u}_{J,K,\lambda}C_{J,(k,K\setminus j)}\varepsilon(k,K)dz_{J^C}\wedge d\overline{z}_{(j,K\setminus k)^C} \otimes e^*_\mu \tag{\ref{A_E and A_E^*}.b}\\
        &+\sum_{J\ni j\ne k\notin J}c_{jk\mu\lambda}\overline{u}_{J,K,\lambda}C_{(k,J\setminus j),K}\varepsilon(j,J) dz_{(k,J\setminus j)^C}\wedge d\overline{z}_{K^C}\otimes e^*_\mu, ~\, and \tag{\ref{A_E and A_E^*}.c}\\
        [i\Theta_{E^*,h^*},\Lambda_{\omega}]\ast_Eu=&[i\Theta_{E^*,h^*},\Lambda_{\omega}]\sum_{|J|=p,|K|=q,\lambda}\overline{u}_{J,K,\lambda}C_{J,K}dz_{J^C}\wedge d\overline{z}_{K^C}\otimes e_\lambda^*\\
        =&-\Bigl(\sum_{j\in J^C}+\sum_{j\in K^C}-\sum_{1\leq j\leq n}\Bigr)c_{jj\mu\lambda}\overline{u}_{J,K,\lambda}C_{J,K}dz_{J^C}\wedge d\overline{z}_{K^C}\otimes e^*_\mu \tag{\ref{A_E and A_E^*}.a$'$}\\
        &-\sum_{K^C\ni j\ne k\notin K^C}c_{jk\mu\lambda}\overline{u}_{J,K,\lambda}C_{J,K}\varepsilon(j,K^C)dz_{J^C}\wedge d\overline{z}_{k,K^C\setminus j} \otimes e^*_\mu \tag{\ref{A_E and A_E^*}.b$'$}\\
        &-\sum_{J^C\ni k\ne j\notin J^C}c_{jk\mu\lambda}\overline{u}_{J,K,\lambda}C_{J,K}\varepsilon(k,J^C) dz_{j,J^C\setminus k}\wedge d\overline{z}_{K^C}\otimes e^*_\mu \tag{\ref{A_E and A_E^*}.c$'$}
    \end{align*}
    From the condition
    \begin{align*}
        \sum_{j\in J}+\sum_{j\in K}-\sum_{1\leq j\leq n}=\sum_{1\leq j\leq n}-\sum_{j\in J^C}+\sum_{1\leq j\leq n}-\sum_{j\in K^C}-\sum_{1\leq j\leq n}=-\Bigl(\sum_{j\in J^C}+\sum_{j\in K^C}-\sum_{1\leq j\leq n}\Bigr),
    \end{align*}
    we have the equation $(\ref{A_E and A_E^*}.a)=(\ref{A_E and A_E^*}.a')$. We prove that if $j\ne k$ then 
    \[ -\mathrm{sgn}(K,K^C)\varepsilon(j,K^C)d\overline{z}_{k,K^C\setminus j}=\mathrm{sgn}((j,K\setminus k),(j,K\setminus k)^C)\varepsilon(k,K)d\overline{z}_{(j,K\setminus k)^C} \]
    to show that $(\ref{A_E and A_E^*}.b)=(\ref{A_E and A_E^*}.b')$ and $(\ref{A_E and A_E^*}.c)=(\ref{A_E and A_E^*}.c')$. 
    From definition, we have that $\varepsilon(s,K)d\overline{z}_s\wedge d\overline{z}_{K\setminus s}=d\overline{z}_K$ and that $\mathrm{sgn}(K,K^C)d\overline{z}_{K^C}$ is characterized by $\mathrm{sgn}(K,K^C)d\overline{z}_{K}\wedge d\overline{z}_{K^C}=d\overline{z}_N$. 
    Then the above equation are proven from the following equation:
    \begin{align*}
        -\mathrm{sgn}(K,K^C)&\varepsilon(j,K^C)\varepsilon(k,K)d\overline{z}_{j,K\setminus k}\wedge d\overline{z}_{k,K^C\setminus j}\\
        &=-\mathrm{sgn}(K,K^C)\varepsilon(j,K^C)\varepsilon(k,K) d\overline{z}_j\wedge d\overline{z}_{K\setminus k}\wedge  d\overline{z}_k\wedge d\overline{z}_{K^C\setminus j}\\
        &=\mathrm{sgn}(K,K^C)\varepsilon(j,K^C)\varepsilon(k,K) d\overline{z}_k\wedge d\overline{z}_{K\setminus k}\wedge  d\overline{z}_j\wedge d\overline{z}_{K^C\setminus j}\\
        &=d\overline{z}_N.
    \end{align*}
    Therefore we get the following equation
    \begin{align*}
        C_{J,(j,K\setminus k)}&\varepsilon(k,K)dz_{J^C}\wedge d\overline{z}_{(j,K\setminus k)^C}\\
        &=i^{n^2}(-1)^{q(n-p)}\mathrm{sgn}(J,J^C)\mathrm{sgn}((k,j\setminus k),(k,j\setminus k)^C)\varepsilon(k,K)dz_{J^C}\wedge d\overline{z}_{(j,K\setminus k)^C}\\
        &=-i^{n^2}(-1)^{q(n-p)}\mathrm{sgn}(J,J^C)\mathrm{sgn}(K,K^C)\varepsilon(j,K^C)dz_{J^C}\wedge d\overline{z}_{k,K^C\setminus j}\\
        &=-C_{J,K}\varepsilon(j,K^C)dz_{J^C}\wedge d\overline{z}_{k,K^C\setminus j}.
    \end{align*}

    Thus from this equation and the fact that $K\ni k\ne j\notin K \iff K^C\ni j\ne k\notin K^C$, we have that $(\ref{A_E and A_E^*}.b)=(\ref{A_E and A_E^*}.b')$.
    And the other equation $(\ref{A_E and A_E^*}.c)=(\ref{A_E and A_E^*}.c')$ can also be shown in the same way. Hence, from the above we have that
    \begin{align*}
        \ast_E\,[i\Theta_{E,h},\Lambda_{\omega}]= [i\Theta_{E^*,h^*},\Lambda_{\omega}]\ast_E.
    \end{align*}

    For any $(p,q)$-form $u\in \mathcal{E}^{p,q}(X,E)$, we have that $\langle[i\Theta_{E,h},\Lambda_{\omega}]u,u\rangle_{h,\omega}=\langle[i\Theta_{E^*\!,h^*},\Lambda_{\omega}]\ast_Eu,\ast_Eu\rangle_{h^*\!,\omega}$ from the following calculation:
    \begin{align*}
        \langle[i\Theta_{E,h},\Lambda_{\omega}]u,u\rangle_{h,\omega}dV_\omega&=\overline{\langle u,[i\Theta_{E,h},\Lambda_{\omega}]u\rangle}_{h,\omega}dV_\omega=\overline{u\wedge \ast_E[i\Theta_{E,h},\Lambda_{\omega}]u}\\
        &=\overline{u\wedge[i\Theta_{E^*,h^*},\Lambda_{\omega}]\ast_Eu}=(-1)^{(n+p)(n-p)}[i\Theta_{E^*,h^*},\Lambda_{\omega}]\ast_Eu\wedge u\\
        &=(-1)^{(p+q)(n-p+n-q)}[i\Theta_{E^*,h^*},\Lambda_{\omega}]\ast_Eu\wedge\ast_{E^*}\ast^{-1}_{E^*}\ast^{-1}_E\ast_Eu\\
        &=(-1)^{(p+q)^2}(-1)^{n-p+n-q}[i\Theta_{E^*,h^*},\Lambda_{\omega}]\ast_Eu\wedge\ast_{E^*}\ast_Eu\\
        &=\langle[i\Theta_{E^*,h^*},\Lambda_{\omega}]\ast_Eu,\ast_Eu\rangle_{h^*,\omega}dV_\omega.
    \end{align*}

    And, we obtain that $\langle[i\Theta_{E,h},\Lambda_{\omega}]^{-1}u,u\rangle_{h,\omega}=\langle[i\Theta_{E^*\!,h^*},\Lambda_{\omega}]^{-1}\ast_Eu,\ast_Eu\rangle_{h^*\!,\omega}$ as follows:\\
    Let $v:=[i\Theta_{E,h},\Lambda_{\omega}]^{-1}u$ then $u=[i\Theta_{E,h},\Lambda_{\omega}]v$. Therefore we get
    \begin{align*}
        [i\Theta_{E^*\!,h^*},\Lambda_{\omega}]\ast_Ev&=\ast_E[i\Theta_{E,h},\Lambda_{\omega}]v=\ast_Eu,\\
        [i\Theta_{E^*\!,h^*},\Lambda_{\omega}]^{-1}\ast_Eu&=[i\Theta_{E^*\!,h^*},\Lambda_{\omega}]^{-1}\ast_E[i\Theta_{E,h},\Lambda_{\omega}]v\\
        &=[i\Theta_{E^*\!,h^*},\Lambda_{\omega}]^{-1}[i\Theta_{E^*\!,h^*},\Lambda_{\omega}]\ast_Ev=\ast_Ev.
    \end{align*}

    Hence, we have that
    \begin{align*}
        \langle[i\Theta_{E,h},\Lambda_{\omega}]^{-1}u,u\rangle_{h,\omega}&=\langle v,[i\Theta_{E,h},\Lambda_{\omega}]v\rangle_{h,\omega}=\overline{\langle [i\Theta_{E,h},\Lambda_{\omega}]v,v\rangle}_{h,\omega}\\
        &=\overline{\langle [i\Theta_{E^*\!,h^*},\Lambda_{\omega}]\ast_Ev,\ast_Ev\rangle}_{h^*,\omega}=\langle \ast_Ev,[i\Theta_{E^*\!,h^*},\Lambda_{\omega}]\ast_Ev\rangle_{h^*,\omega}\\
        &=\langle[i\Theta_{E^*\!,h^*},\Lambda_{\omega}]^{-1}\ast_Eu,\ast_Eu\rangle_{h^*\!,\omega}.
    \end{align*}
    From the above, this proof is completed.
\end{proof}

For the $L^2$-estimate that is of $\overline{\partial}$-type, we obtain the $\overline{\partial}^*$-type $L^2$-estimate in the same way from equation $\overline{\partial}^*\circ\overline{\partial}^*=0$. 
By using Theorem \ref{A_E and A_E^*}, we obtain a more direct relationship, as follows.

\begin{corollary}
    Let $(X,\omega)$ be a Hermitian manifold and let $(E,h)$ be a holomorphic Hermitian vector bundle over $X$.
    If $\omega$ is complete on $X$ and the curvature operator $[i\Theta_{E,h},\Lambda_\omega]$ is positive definite on $\Lambda^{p,q}T^*_X\otimes E$, i.e. $A^{p,q}_{E,h,\omega}>0$, for some $q\geq1$, 
    then for any form $g\in L^2_{p,q}(X,E,h,\omega)$ satisfying $\overline{\partial}^*_Eg=0$ and $\int_X\langle [i\Theta_{E,h},\Lambda_\omega]^{-1}g,g\rangle_{h,\omega}dV_\omega<+\infty$, there exists $f\in L^2_{p,q+1}(X,E,h,\omega)$ such that $\overline{\partial}^*_Ef=g$ and
    \[
        ||f||^2_{h,\omega}\leq\int_X\langle [i\Theta_{E,h},\Lambda_\omega]^{-1}g,g\rangle_{h,\omega}dV_\omega.
    \]

    In particular, we have that $\ast_Eg\in L^2_{n-p,n-q}(X,E^*,h^*,\omega)$ satisfying $\overline{\partial}_{E^*}\ast_Eg=0$ and
    \[
        \int_X\langle [i\Theta_{E^*,h^*},\Lambda_\omega]^{-1}\ast_Eg,\ast_Eg\rangle_{h^*,\omega}dV_\omega=\int_X\langle [i\Theta_{E,h},\Lambda_\omega]^{-1}g,g\rangle_{h,\omega}dV_\omega<+\infty,
    \]
    and that $\ast_Ef\in L^2_{n-p,n-q-1}(X,E^*,h^*,\omega)$ satisfying $(-1)^{p+q+1}\overline{\partial}_{E^*}\ast_Ef=\ast_Eg$ and
    \begin{align*}
        ||\ast_Ef||^2_{h^*,\omega}=||f||^2_{h,\omega}&\leq\int_X\langle [i\Theta_{E,h},\Lambda_\omega]^{-1}g,g\rangle_{h,\omega}dV_\omega\\
        &=\int_X\langle [i\Theta_{E^*,h^*},\Lambda_\omega]^{-1}\ast_Eg,\ast_Eg\rangle_{h^*,\omega}dV_\omega.
    \end{align*}
        
    In other words, when the solution to the $\overline{\partial}^*$-type $L^2$-estimate for $(p,q)$-form $g$ is $f$, the solution to the $L^2$-estimate for $\ast_Eg$ can be given by $(-1)^{p+q+1}\ast_Ef$.
\end{corollary}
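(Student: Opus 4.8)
The plan is to move the whole problem onto the dual bundle $(E^*,h^*)$ by the Hodge star $\ast_E$ and then to invoke the classical Hörmander--Andreotti--Vesentini--Demailly $L^2$-existence theorem for $\overline{\partial}$. Three facts carry the argument. First, Theorem \ref{A_E and A_E^*} converts the hypothesis $A^{p,q}_{E,h,\omega}>0$ into $A^{n-p,n-q}_{E^*,h^*,\omega}>0$ and supplies the pointwise identity $\langle A^{-1}_{E,h,\omega}u,u\rangle_{h,\omega}=\langle A^{-1}_{E^*,h^*,\omega}\ast_Eu,\ast_Eu\rangle_{h^*,\omega}$. Second, because $\omega$ is complete the Hilbert-space adjoint $\overline{\partial}^{*}_{E}$ coincides with the formal one, and the Hodge star intertwines the two operators through the commutation formula $\overline{\partial}_{E^*}\ast_E=(-1)^{p+q}\ast_E\overline{\partial}^{*}_{E}$ on $(p,q)$-forms, the sign being the one produced by Lemma \ref{calculate Hodge *} together with $\ast_{E^*}\ast_E=(-1)^{p+q}\mathrm{id}$. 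Third, $\ast_E$ is a conjugate-linear isometry, so $\|\ast_Eu\|_{h^*,\omega}=\|u\|_{h,\omega}$; this is immediate from Lemma \ref{calculate Hodge *} since $|C_{J,K}|=1$.

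Given $g$, I first set $G:=\ast_Eg\in L^2_{n-p,n-q}(X,E^*,h^*,\omega)$. Applying the commutation formula to $g$ gives $\overline{\partial}_{E^*}G=(-1)^{p+q}\ast_E\overline{\partial}^{*}_{E}g=0$, which is the first assertion of the ``in particular'' part, and integrating the pointwise identity of Theorem \ref{A_E and A_E^*} over $X$ yields
\[ \int_X\langle A^{-1}_{E^*,h^*,\omega}G,G\rangle_{h^*,\omega}\,dV_\omega=\int_X\langle A^{-1}_{E,h,\omega}g,g\rangle_{h,\omega}\,dV_\omega<+\infty, \]
which is the claimed integral identity and also guarantees that the weight for the dual problem is finite.

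Since $\omega$ is complete and $A^{n-p,n-q}_{E^*,h^*,\omega}>0$, the $L^2$-existence theorem on $(E^*,h^*)$ applied to the $\overline{\partial}_{E^*}$-closed form $G$ produces $F\in L^2_{n-p,n-q-1}(X,E^*,h^*,\omega)$ with $\overline{\partial}_{E^*}F=G$ and $\|F\|^2_{h^*,\omega}\le\int_X\langle A^{-1}_{E^*,h^*,\omega}G,G\rangle_{h^*,\omega}\,dV_\omega$. I then set $f:=\ast_{E^*}F\in L^2_{p,q+1}(X,E,h,\omega)$. Because $\ast_E\ast_{E^*}=(-1)^{p+q+1}\mathrm{id}$ in this bidegree, we have $\ast_Ef=(-1)^{p+q+1}F$, hence $(-1)^{p+q+1}\overline{\partial}_{E^*}\ast_Ef=\overline{\partial}_{E^*}F=\ast_Eg$; feeding this through the commutation formula on $(p,q+1)$-forms returns $\overline{\partial}^{*}_{E}f=g$. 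The isometry gives $\|f\|_{h,\omega}=\|F\|_{h^*,\omega}$, so combining it with the estimate for $F$ and the integral identity of the previous step yields $\|f\|^2_{h,\omega}\le\int_X\langle A^{-1}_{E,h,\omega}g,g\rangle_{h,\omega}\,dV_\omega$. This is the $\overline{\partial}^{*}$-estimate, and the displayed relations are precisely the remaining assertions of the ``in particular'' part together with the final remark, in which $F=(-1)^{p+q+1}\ast_Ef$ solves the $\overline{\partial}$-problem for $\ast_Eg$.

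The one genuinely delicate point is the sign bookkeeping: both the sign in $\overline{\partial}_{E^*}\ast_E=\pm\ast_E\overline{\partial}^{*}_{E}$ and the value $\ast_{E^*}\ast_E=(-1)^{p+q}\mathrm{id}$ must be read off the explicit Hodge star of Lemma \ref{calculate Hodge *}, and it is their combination that produces exactly the factor $(-1)^{p+q+1}$ in $(-1)^{p+q+1}\overline{\partial}_{E^*}\ast_Ef=\ast_Eg$; these are the same signs already met at the end of the proof of Theorem \ref{A_E and A_E^*}. A minor bookkeeping point is that the reduction is substantive only when the dual degree $n-q\ge1$, the extremal case being degenerate. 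No new hard estimate is needed, since the analysis is imported wholesale as the $L^2$-existence theorem on $E^*$ once Theorem \ref{A_E and A_E^*} has transferred the positivity.
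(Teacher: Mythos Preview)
Your proof is correct and uses the same three ingredients as the paper: the identity $\overline{\partial}^*_E=-\ast_{E^*}\overline{\partial}_{E^*}\ast_E$, the sign $\ast_{E^*}\ast_E=(-1)^{p+q}$ on $(p,q)$-forms, and Theorem~\ref{A_E and A_E^*}. The only organizational difference is the direction in which you run the Hodge-star duality. The paper takes the existence of $f$ solving $\overline{\partial}^*_Ef=g$ as already known (it is obtained abstractly, ``in the same way from $\overline{\partial}^*\circ\overline{\partial}^*=0$'') and then simply checks that $(-1)^{p+q+1}\ast_Ef$ solves the $\overline{\partial}_{E^*}$-equation for $\ast_Eg$; you instead transport $g$ to $G=\ast_Eg$, invoke the classical $\overline{\partial}$-existence theorem on $(E^*,h^*)$ to produce $F$, and then pull back by $f=\ast_{E^*}F$. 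Your route has the minor advantage of being self-contained---it actually \emph{constructs} $f$ and proves the first assertion of the corollary rather than quoting the $\overline{\partial}^*$-type estimate---while the paper's route makes the point of the corollary (that the two problems are literally the same under $\ast_E$) more transparent. The sign computations you carry out match the paper's exactly.
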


\begin{proof}
    From the formula $\overline{\partial}^*_E=-\ast_{E^*}\overline{\partial}_{E^*}\ast_E$ (cf. \cite{L^2 Serre}), we get $\overline{\partial}_{E^*}\ast_Eg=0$. Since $\ast_{E^*}\ast_E|_{\mathcal{E}^{p,q}(X,E)}=(-1)^{p+q}$ (cf. \cite{L^2 Serre}), we have that
    \[
        \ast_Eg=\ast_E\overline{\partial}^*_Ef=-\ast_E\ast_{E^*}\overline{\partial}_{E^*}\ast_Ef=-(-1)^{n-p+n-q}\overline{\partial}_{E^*}\ast_Ef=(-1)^{p+q+1}\overline{\partial}_{E^*}\ast_Ef.
    \]
    From the above and Theorem \ref{A_E and A_E^*}, this proof is completed.
\end{proof}

\section{The $(n,q)$ and $(p,n)$-$L^2$-estimate condition and semi-positivity of the curvature operator}\label{section:3}

In this section, we prove Theorem \ref{Main thm 1} and \ref{Main thm 2} which is an extension of \,[DNWZ20,\\
Theorem\,1.1] 
by using the following proposition.

\begin{proposition}\label{posi line bdl metric}$(\mathrm{cf.~[DNWZ20,~Proposition ~2.1]})$
    Let $X$ be a \kah manifold, which admits a positive holomorphic Hermitian line bundle, and $(A,h_A)$ be a positive holomorphic Hermitian line bundle over $X$.
    Let $(U,(z_1,\cdots,z_n))$ ba a local coordinate on $X$, such that $A|_U$ is trivial, and $B\subset\subset U$ be a coordinate ball.
    Then for any smooth strictly plurisubharmonic function $\psi$ on $U$, there is a positive integer $m$, and a Hermitian metric $h_m$ on the line bundle $A^{\otimes m}$ such that $h_m=e^{-\psi_m}$ on $U$ with $\psi_m|_B=\psi$, where $\psi_m$ is a smooth strictly plurisubharmonic function.
\end{proposition}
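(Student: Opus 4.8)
The plan is to build $h_m$ by grafting the weight $\psi$ onto the power $h_A^{\otimes m}$ through a regularized maximum, letting the positivity of $A$ dominate the transition. Write the given metric locally as $h_A=e^{-\varphi}$ on $U$ with $\varphi\in\mathcal{C}^\infty(U)$ strictly plurisubharmonic ($\idd\varphi>0$, since $(A,h_A)$ is positive), so that $h_A^{\otimes m}=e^{-m\varphi}$ on $U$. Any global modification $e^{-s}h_A^{\otimes m}$, with $s$ a smooth function on $X$, is again a metric on $A^{\otimes m}$ whose local weight on $U$ is $m\varphi+s$; in particular $e^{-C}h_A^{\otimes m}$ has weight $m\varphi+C$. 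The target is therefore a smooth strictly plurisubharmonic $\psi_m$ on a ball $B'$ with $\bar B\subset\subset B'\subset\subset U$ that equals $\psi$ on $B$ and equals $m\varphi+C$ near $\partial B'$, so that it patches with $e^{-C}h_A^{\otimes m}$ on the complement.

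First I would normalize the frame. Fix the center $p$ of $B$ and choose concentric coordinate balls $B\subset\subset B''\subset\subset B'\subset\subset U$. Multiplying the trivializing section of $A|_U$ by $e^{g}$ for a holomorphic polynomial $g$ matching the holomorphic part of the $2$-jet of $\varphi$ at $p$ leaves the metric unchanged but replaces $\varphi$ by $\varphi-2\,\mathrm{Re}\,g$, which now has a strict minimum at $p$ with a quadratic lower bound $\varphi(z)-\varphi(p)\ge c|z-p|^2+o(|z-p|^2)$. Hence, for suitably chosen radii, $\max_{\bar B}\varphi<\min_{\overline{B'}\setminus B''}\varphi$, with a positive gap $\eta$.

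Next I would fix $m$ and $C$. Since $\psi$ is bounded on the compact set $\overline{B'}$, the function $\psi-m\varphi$ satisfies $\min_{\bar B}(\psi-m\varphi)-\max_{\overline{B'}\setminus B''}(\psi-m\varphi)\ge m\eta-O(1)$, which is positive for $m$ large; choose such an $m$ and a constant $C$ lying strictly between these two extrema. Define $\psi_m$ on $B'$ as the regularized maximum of $\psi$ and $m\varphi+C$ with a sufficiently small regularization parameter. Both arguments are smooth and strictly plurisubharmonic, so $\psi_m$ is smooth and strictly plurisubharmonic; the choice of $C$ (with the margins absorbed by the regularization) forces $\psi_m=\psi$ on $B$ and $\psi_m=m\varphi+C$ on $\overline{B'}\setminus B''$. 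Setting $h_m:=e^{-\psi_m}$ in the frame over $B'$ and $h_m:=e^{-C}h_A^{\otimes m}$ over $X\setminus B''$ then yields a well-defined global Hermitian metric on $A^{\otimes m}$ with $\psi_m|_B=\psi$, as required.

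The main obstacle is the ordering $\max_{\bar B}\varphi<\min_{\overline{B'}\setminus B''}\varphi$: it is exactly what forces both ingredients of the construction, namely the holomorphic change of frame that recenters the ``bowl'' of $\varphi$ at $p$ (a purely plurisubharmonic phenomenon, false for a general weight) and the passage to a high power $m$, which lets the $m$-scaled weight dominate the fixed function $\psi$ inside the regularized maximum. Once this ordering is secured, checking that the regularized maximum is strictly plurisubharmonic and that the two formulas for $h_m$ agree on the overlap $\overline{B'}\setminus B''$ is routine.
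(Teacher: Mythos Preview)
Your overall strategy---glue $\psi$ to a power of $h_A$ via a regularized maximum---matches the paper's, but the step securing the ordering $\max_{\bar B}\varphi<\min_{\overline{B'}\setminus B''}\varphi$ has a gap. Subtracting the holomorphic $2$-jet of $\varphi$ at $p$ only guarantees a strict local minimum with quadratic growth in some neighborhood of $p$ that you do not control; since the ball $B$ is \emph{fixed} in the hypotheses, it may be far larger than that neighborhood. A concrete failure in one variable: take $p=0$, $B=\{|z|<1\}$, $U=\{|z|<5\}$, and $\varphi(z)=|z|^{2}+2\,\mathrm{Re}(z^{3})$, which is strictly plurisubharmonic and already $2$-jet normalized at $0$. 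Then $\max_{\bar B}\varphi=\varphi(1)=3$, while $\varphi(-r)=r^{2}-2r^{3}\le-1$ for every $1\le r<5$, so every annulus in $U$ surrounding $B$ meets $\{\varphi<3\}$ and the required inequality fails for all choices of $B'',B'$. Matching a higher-order jet repairs this particular example, but in dimension $n\ge2$ you cannot in general subtract a pluriharmonic function to force the sublevel sets of $\varphi$ into a prescribed annular shell: the Dirichlet problem for pluriharmonic functions is overdetermined.

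The paper avoids this issue by inserting a logarithmic barrier instead of relying on the shape of $\varphi$. After rescaling so that $B$ is the unit ball and normalizing $\varphi>0$, it sets $\phi_m:=m\varphi+\chi\log(\lVert z\rVert^{2}-1)$ on $U\setminus\overline{B}$, with $\chi$ a cutoff equal to $1$ on $\overline{B}_{1+\delta}$ and $0$ outside $B_{1+2\delta}$. The $\log$ term drives $\phi_m\to-\infty$ at $\partial B$, so the regularized maximum $\max_\varepsilon(\phi_m,\psi)$ equals $\psi$ just outside $B$ regardless of $\varphi$; on $\partial B_{1+\delta}$ one has $\phi_m=m\varphi+O(1)>\psi$ for $m\gg1$, so it equals $\phi_m$ there; and the non-plurisubharmonic contribution from differentiating $\chi$ is supported on a fixed compact annulus and is dominated by $m\,\idd\varphi$. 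This barrier, not a frame normalization of $\varphi$, is the ingredient that makes the gluing work.
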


\begin{proof}
    Assume that $h_A|_U=e^{-\phi}$ for some smooth strictly plurisubharmonic function $\phi$ on $U$. We may assume that $\phi>0$. We may assume that $B:=B_1$ is the unit ball, and the ball $B_{1+3\delta}$ with radius $1+3\delta$ is also contained in $U$, for $0<\delta<<1$.
    Let $\chi$ be a cut-off function on $U$ such that $\chi|_{\overline{B}_{1+\delta}}=1$ and $\chi|_{U\setminus B_{1+2\delta}}=0$. Let $\phi_m:=m\phi+\chi\log(||z||^2-1)$ on $U\setminus B_1$, where $m>>1$ is an integer such that $\psi_m$ is strictly plurisubharmonic on $U\setminus B_1$ and $\phi_m>\psi$ on $\partial B_{1+\delta}$.

    Now we define a function $\psi_m$ on $U$ as follows:
    \begin{equation*}
        \psi_m(z) = \left\{  
        \begin{aligned}
        &\phi_m, & \text{outside $B_{1+\delta}$}; \\
        &\max_\varepsilon\{\phi_m, \psi\}, & \text{on $B_{1+\delta}\setminus B_1$}; \\
        &\psi, & \text{on $B_1$}.
        \end{aligned}
        \right.
    \end{equation*}

    Then for $0<\varepsilon<<1, \psi_m$ is strictly plurisubharmonic on $U$, $\psi_m|_B=\psi$, and equals to $m\psi$ on $U\setminus B_{1+2\delta}$. So $\psi_m$ gives a Hermitian metric on $A^{\otimes m}|_U$ which coincides with $h^{\otimes m}$ on $U\setminus B_{1+2\delta}$.
\end{proof}

\begin{theorem}\label{L^2-estimate completeness (p,n)-forms}$(\mathrm{cf.~[Wat22,~Theorem ~3.7]})$
    Let $(X,\widehat{\omega})$ be a complete \kah manifold, $\omega$ be another \kah metric which is not necessarily complete and $(E,h)$ be a holomorphic vector bundle which satisfies $A^{p,n}_{E,h,\omega}\geq0$. 
    Then for any $\overline{\partial}$-closed $f\in L^2_{p,n}(X,E,h,\omega)$ there exists $u\in L^2_{p,n-1}(X,E,h,\omega)$ satisfies $\overline{\partial}u=f$ and 
    \begin{align*}
        \int_X|u|^2_{h,\omega}dV_{\omega}\leq\int_X\langle(A^{p,n}_{E,h,\omega})^{-1}f,f\rangle_{h,\omega}dV_{\omega},
    \end{align*}
    where we assume that the right-hand side is finite.
\end{theorem}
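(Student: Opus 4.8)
The plan is to run the classical Hörmander–Andreotti–Vesentini $L^2$-method, using the completeness of $\widehat{\omega}$ only to supply a regularizing family of complete metrics, while all norms, the curvature operator, and the estimate remain attached to $\omega$. Concretely, I would set $\omega_\varepsilon:=\omega+\varepsilon\widehat{\omega}$ for $\varepsilon>0$; each $\omega_\varepsilon$ is a complete \kah metric since $\omega_\varepsilon\geq\varepsilon\widehat{\omega}$ and $\widehat{\omega}$ is complete. The strategy is to solve $\overline{\partial}u_\varepsilon=f$ with a uniform $L^2$-bound on $(X,\omega_\varepsilon)$ and then extract a weak limit as $\varepsilon\to0$.

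The first point is that for $(p,n)$-forms the Bochner–Kodaira–Nakano machinery simplifies decisively: since $\Lambda^{p,n+1}T^*_X=0$, every $(p,n)$-form is $\overline{\partial}$-closed, so on the \kah manifold $(X,\omega_\varepsilon)$ the BKN identity reduces, for compactly supported smooth $g\in\mathcal{D}^{p,n}(X,E)$, to the a priori inequality
\[
\|\overline{\partial}^*_\varepsilon g\|^2_{h,\omega_\varepsilon}\geq\int_X\langle A^{p,n}_{E,h,\omega_\varepsilon}g,g\rangle_{h,\omega_\varepsilon}\,dV_{\omega_\varepsilon}.
\]
Completeness of $\omega_\varepsilon$ guarantees that $\mathcal{D}^{p,n}(X,E)$ is dense in $\mathrm{Dom}\,\overline{\partial}\cap\mathrm{Dom}\,\overline{\partial}^*_\varepsilon$ for the graph norm, so this inequality extends to the full domain. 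Combining it with the Cauchy–Schwarz inequality for the nonnegative operator $A^{p,n}_{E,h,\omega_\varepsilon}$ gives
\[
|\langle f,g\rangle_{h,\omega_\varepsilon}|^2\leq\Bigl(\int_X\langle(A^{p,n}_{E,h,\omega_\varepsilon})^{-1}f,f\rangle_{h,\omega_\varepsilon}\,dV_{\omega_\varepsilon}\Bigr)\,\|\overline{\partial}^*_\varepsilon g\|^2_{h,\omega_\varepsilon},
\]
whereupon the standard Riesz/Hahn–Banach argument produces $u_\varepsilon$ with $\overline{\partial}u_\varepsilon=f$ and $\|u_\varepsilon\|^2_{h,\omega_\varepsilon}\leq\int_X\langle(A^{p,n}_{E,h,\omega_\varepsilon})^{-1}f,f\rangle_{h,\omega_\varepsilon}\,dV_{\omega_\varepsilon}$.

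The crucial technical input, and what I expect to be the main obstacle, is the uniform control of the metric dependence: one must show that $A^{p,n}_{E,h,\omega_\varepsilon}\geq0$ persists for every $\varepsilon\geq0$ and that the right-hand density is monotone, i.e.
\[
\langle(A^{p,n}_{E,h,\omega_\varepsilon})^{-1}f,f\rangle_{h,\omega_\varepsilon}\,dV_{\omega_\varepsilon}\leq\langle(A^{p,n}_{E,h,\omega})^{-1}f,f\rangle_{h,\omega}\,dV_{\omega}
\]
whenever $\omega_\varepsilon\geq\omega$. Both facts rest on the special structure of top antiholomorphic degree exposed by Proposition \ref{calculate (p,q)-forms}: for $(p,n)$-forms the multi-index $K$ is forced to be the full set $\{1,\dots,n\}$, so $\langle A^{p,n}_{E,h,\omega}u,u\rangle$ involves only the curvature coefficients together with the holomorphic multi-indices, precisely the configuration in which a nonnegative Hermitian form can be compared pointwise against an increasing metric. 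This is the analogue, under the Hodge-star duality of Theorem \ref{A_E and A_E^*}, of the classical monotonicity used for $(n,q)$-forms, and I would establish it by the same diagonalization of the curvature in an $\omega$-orthonormal frame.

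Granting these, the total mass $\|u_\varepsilon\|^2_{h,\omega_\varepsilon}$ is bounded, uniformly in $\varepsilon$, by the finite quantity $\int_X\langle(A^{p,n}_{E,h,\omega})^{-1}f,f\rangle_{h,\omega}\,dV_\omega$. Since on every relatively compact $\Omega\subset\subset X$ the metrics $\omega\leq\omega_\varepsilon\leq\omega_{\varepsilon_0}$ are uniformly equivalent, the family $(u_\varepsilon)$ is bounded in $L^2$ on each $\Omega$; a diagonal weak-compactness argument then yields a weak limit $u$ with $\overline{\partial}u=f$. Finally, lower semicontinuity of the norm under weak limits together with the monotone convergence of the right-hand side as $\varepsilon\downarrow0$ and $\Omega\uparrow X$ delivers the desired estimate $\int_X|u|^2_{h,\omega}\,dV_\omega\leq\int_X\langle(A^{p,n}_{E,h,\omega})^{-1}f,f\rangle_{h,\omega}\,dV_\omega$, completing the proof.
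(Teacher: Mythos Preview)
The paper does not prove this theorem; it is imported from the author's companion article [Wat22, Theorem~3.7]. Your outline is precisely Demailly's two-metric method (used in [Dem-book, Chapter~VIII, Theorem~6.1] for bidegree $(n,q)$, and cited as such later in this paper) transported to bidegree $(p,n)$, and that is almost certainly the argument recorded in [Wat22] as well: regularize via $\omega_\varepsilon=\omega+\varepsilon\widehat{\omega}$, apply Bochner--Kodaira--Nakano on the complete manifold $(X,\omega_\varepsilon)$, solve by Riesz representation, and extract a weak limit using the metric monotonicity of the relevant densities.

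You have correctly isolated the one step that is not already in Demailly, namely the pointwise inequality
\[
\langle(A^{p,n}_{E,h,\omega_\varepsilon})^{-1}f,f\rangle_{h,\omega_\varepsilon}\,dV_{\omega_\varepsilon}\ \le\ \langle(A^{p,n}_{E,h,\omega})^{-1}f,f\rangle_{h,\omega}\,dV_{\omega}\qquad(\omega_\varepsilon\ge\omega)
\]
together with the persistence of $A^{p,n}_{E,h,\omega_\varepsilon}\ge0$. One small correction to your proposed route: Theorem~\ref{A_E and A_E^*} sends $(p,n)$-forms on $E$ to $(n-p,0)$-forms on $E^*$, which is not the $(n,q)$ range in which Demailly's monotonicity lemma is stated, so that duality does not by itself reduce the new lemma to the old one. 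The direct linear-algebra computation you also mention---simultaneously diagonalize $\omega$ and $\omega_\varepsilon$, and use the explicit shape of $A^{p,n}$ from Proposition~\ref{calculate (p,q)-forms} with $K=\{1,\dots,n\}$ so that only the holomorphic multi-index is in play---is what actually establishes the monotonicity, mirroring Demailly's $(n,q)$ computation.
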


Using Proposition \ref{posi line bdl metric} and Theorem \ref{L^2-estimate completeness (p,n)-forms},
we obtain the following characterization of $A^{p,n}_{E,h,\omega}\geq0$ by $L^2$-estimate.
Moreover, by this theorem, we obtain the characterization of Nakano semi-negativity by $L^2$-estimate in the next section.

\begin{theorem}$(\mathrm{= Theorem ~\ref{Main thm 2}})$
    Let $(X,\omega)$ be a \kah manifold of dimension $n$ which admits a positive holomorphic Hermitian line bundle and $(E,h)$ be a holomorphic Hermitian vector bundle over $X$ and $p$ be a nonnegative integer. 
    Then $(E,h)$ satisfies the $(p,n)$-$L^2_\omega$-estimate condition on $X$ if and only if $A^{p,n}_{E,h,\omega}\geq0$.
\end{theorem}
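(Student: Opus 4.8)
The plan is to prove the two implications separately: sufficiency ($A^{p,n}_{E,h,\omega}\ge0\Rightarrow$ the condition) by feeding a curvature comparison into Theorem \ref{L^2-estimate completeness (p,n)-forms}, and necessity by a Bochner--Kodaira--Nakano duality followed by a localization at a point. For sufficiency, fix a positive line bundle $(A,h_A)$. Since $A$ has rank one, $i\Theta_{E\otimes A,h\otimes h_A}=i\Theta_{E,h}\otimes\mathrm{id}_A+\mathrm{id}_E\otimes i\Theta_{A,h_A}$, so on $(p,n)$-forms $A^{p,n}_{E\otimes A,h\otimes h_A,\omega}=A^{p,n}_{E,h,\omega}\otimes\mathrm{id}_A+[i\Theta_{A,h_A}\otimes\mathrm{id}_E,\Lambda_\omega]$. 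By Proposition \ref{calculate (p,q)-forms}, for a positive line bundle the symbol of $[i\Theta_{A,h_A}\otimes\mathrm{id}_E,\Lambda_\omega]$ on a $(p,n)$-form supported on the multi-index $J$ is $\sum_{j\in J}\gamma_j$ (with $\gamma_j>0$ the curvature eigenvalues), hence this operator is positive definite once $|J|=p\ge1$. Combining, $A^{p,n}_{E\otimes A,h\otimes h_A,\omega}\ge[i\Theta_{A,h_A}\otimes\mathrm{id}_E,\Lambda_\omega]>0$, and by operator monotonicity of inversion the inverses satisfy the reverse inequality. Applying Theorem \ref{L^2-estimate completeness (p,n)-forms} to $(E\otimes A,h\otimes h_A)$ with a complete \kah metric $\widehat\omega$ on $X$ then solves $\overline\partial u=f$ with $\int_X|u|^2\le\int_X\langle(A^{p,n}_{E\otimes A,h\otimes h_A,\omega})^{-1}f,f\rangle\le\int_X\langle[i\Theta_{A,h_A}\otimes\mathrm{id}_E,\Lambda_\omega]^{-1}f,f\rangle$, which is exactly the $(p,n)$-$L^2_\omega$-estimate condition.

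For necessity, assume the condition and fix $x_0\in X$; the goal is $\langle A^{p,n}_{E,h,\omega}v,v\rangle\ge0$ for all $v\in\Lambda^{p,n}T^*_{X,x_0}\otimes E_{x_0}$. Using Proposition \ref{posi line bdl metric}, I would manufacture from the given positive line bundle a metric on a high power $A^{\otimes m}$ whose weight equals a prescribed strictly plurisubharmonic $\psi$ on a ball around $x_0$; this makes $B:=[i\Theta_{A^{\otimes m}}\otimes\mathrm{id}_E,\Lambda_\omega]$ positive definite on $(p,n)$-forms and puts its local symbol under control. For any compactly supported smooth $(p,n)$-form $\alpha$ valued in $E\otimes A^{\otimes m}$ (which is automatically $\overline\partial$-closed), set $f:=B\alpha$; the condition gives $u$ with $\overline\partial u=f$ and $\|u\|^2\le\int_X\langle B\alpha,\alpha\rangle$. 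Pairing with $\alpha$ and using Cauchy--Schwarz, $\int_X\langle B\alpha,\alpha\rangle=\langle\overline\partial u,\alpha\rangle=\langle u,\overline\partial^*\alpha\rangle\le(\int_X\langle B\alpha,\alpha\rangle)^{1/2}\|\overline\partial^*\alpha\|$, whence $\int_X\langle B\alpha,\alpha\rangle\le\|\overline\partial^*\alpha\|^2$ for every test $\alpha$.

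Next I would invoke the Bochner--Kodaira--Nakano identity on the \kah manifold $X$. Since $\overline\partial\alpha=0$, it reads $\|\overline\partial^*\alpha\|^2=\|D'\alpha\|^2+\|D'^*\alpha\|^2+\int_X\langle A^{p,n}_{E\otimes A^{\otimes m},\,\cdot\,,\omega}\alpha,\alpha\rangle$, and the splitting $A^{p,n}_{E\otimes A^{\otimes m}}=A^{p,n}_{E,h,\omega}\otimes\mathrm{id}+B$ cancels the $B$-contributions on both sides, leaving the clean inequality $0\le\|D'\alpha\|^2+\|D'^*\alpha\|^2+\int_X\langle A^{p,n}_{E,h,\omega}\alpha,\alpha\rangle$ for every compactly supported test form. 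The final step is to extract pointwise positivity: supposing $\langle A^{p,n}_{E,h,\omega}v,v\rangle<0$ at $x_0$, I would work in coordinates normal at $x_0$ with a suitable holomorphic frame and build test forms concentrated near $x_0$ with symbol $v$, tuned through the prescribed weight $\psi$ and a rescaling in $m$ so that the first-order terms $\|D'\alpha\|^2+\|D'^*\alpha\|^2$ become negligible against $\bigl|\int_X\langle A^{p,n}_{E,h,\omega}\alpha,\alpha\rangle\bigr|$, contradicting the displayed inequality.

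I expect this last localization to be the main obstacle. A naive scalar bump concentrating at $x_0$ makes the first-order terms $\|D'\alpha\|^2+\|D'^*\alpha\|^2$ scale like $\lambda^{2n-2}$ while the curvature term scales like $\lambda^{2n}$, so the error dominates and no contradiction results; the point is therefore to use the freedom in the weight provided by Proposition \ref{posi line bdl metric}, together with a model (Bargmann--Fock) computation, to produce test forms that are asymptotically annihilated by $D'$ and $D'^*$, and to control this limit uniformly. A secondary technical point is the degenerate case $p=0$: there the symbol $\sum_{j\in\emptyset}\gamma_j$ vanishes, so $B$ is not invertible on $(0,n)$-forms and, consistently with Theorem \ref{(p,q) and (n-q,n-p)} (which forces $A^{0,n}_{E,h,\omega}\ge0\iff A^{0,n}_{E,h,\omega}\le0$), the statement must be interpreted with this degeneracy in mind.
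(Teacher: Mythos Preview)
Your proposal is correct and follows the paper's approach in both directions. Sufficiency is identical (curvature comparison $A^{p,n}_{E\otimes A}\ge[i\Theta_{A,h_A}\otimes\mathrm{id}_E,\Lambda_\omega]>0$ fed into Theorem \ref{L^2-estimate completeness (p,n)-forms}), and for necessity the paper derives exactly your inequality $0\le\|D'\alpha\|^2+\|D'^*\alpha\|^2+\int_X\langle A^{p,n}_{E,h,\omega}\alpha,\alpha\rangle$ by the same Cauchy--Schwarz plus Bochner--Kodaira--Nakano cancellation (with the same substitution $f=B\alpha$, written there as $\alpha=B^{-1}_{\tilde\psi}f$).

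Where you leave the localization as a plan, the paper carries it out explicitly, and the mechanism matches your description. The weight is taken to be $\psi_m=M_{\varepsilon/2}\bigl(|z|^2-R^2,\,m(|z|^2-R^2-\varepsilon^2)\bigr)$ using the regularized max, so that $\psi_m=|z|^2-R^2$ on $B_R$ (independent of $m$) and $\psi_m=m\varphi_\varepsilon$ outside $B_{R+2\varepsilon}$; one builds an explicit primitive $v$ so that $f=\overline\partial v$ equals the constant-coefficient form $\xi$ on the inner ball, and sets $\alpha_m=B^{-1}_{\tilde\psi_m}f$. On $B_R$ one has $B_{\tilde\psi_m}=p\cdot\mathrm{id}+O(|z|^2)$, so $\alpha_m\approx\xi/p$ and the curvature term contributes $\lesssim -c\,\mathrm{Vol}(B_R)$ while $|D'\alpha_m|,|D'^*\alpha_m|=O(|z|)$ are made small by shrinking $R$; on $B_{2R}\setminus B_{R+2\varepsilon}$ the factor $1/(mp)$ in $\alpha_m$ together with $e^{-m\varphi_\varepsilon}$ kills all contributions as $m\to\infty$; the thin shell $B_{R+2\varepsilon}\setminus B_R$ is disposed of by sending $\varepsilon\to0$. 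Your observation about $p=0$ is also consistent: by Proposition \ref{calculate (p,q)-forms} one has $A^{0,n}_{E,h,\omega}\equiv0$, so both sides of the equivalence are vacuous there.
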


\begin{proof}
    First, we show $A^{p,n}_{E,h,\omega}\geq0$ if $(E,h)$ satisfies the $(p,n)$-$L^2_\omega$-estimate condition on $X$.
    Let $(A,h_A)$ be a positive holomorphic Hermitian line bundle over $X$. Let $(U,(z_1,\cdots,z_n))$ ba a local coordinate on $X$, such that $A|_U$ is trivial, and $B\subset\subset U$ be a coordinate ball.
    From Proposition \ref{posi line bdl metric}, for any smooth strictly plurisubharmonic function $\psi$ on $U$, there is a positive integer $l$, and a Hermitian metric $h_l$ on the line bundle $A^{\otimes l}$, such that $h_l=e^{-\tilde{\psi}}$ on $U$ with $\tilde{\psi}|_B=\psi$, where $\tilde{\psi}$ is a smooth strictly plurisubharmonic function.
    By assumption, for any $f\in\mathcal{D}^{p,q}(X,E\otimes A^{\otimes l})$ such that $\overline{\partial}f=0$ and $\mathrm{supp}\, f\subset U$, there is $u\in L^2_{p,q-1}(X,E\otimes A^{\otimes l})$ such that $\overline{\partial}u=f$ and
    \[ 
        ||u||^2_{h\otimes h_l,\omega}=\int_X|u|^2_{h\otimes h_l,\omega}dV_\omega\leq\int_X\langle B^{-1}_{\tilde{\psi}}f,f\rangle_{h,\omega}e^{\tilde{\psi}}dV_\omega, 
    \]
    where $B_{\tilde{\psi}}:=[\idd\tilde{\psi}\otimes\mathrm{id}_E,\Lambda_\omega]$. Here, from $\mathrm{supp}\, f\subset U$ and $A|_U$ is trivial, we have that $f\in\mathcal{D}^{p,q}(U,E\otimes A^{\otimes l})\subset\mathcal{D}^{p,q}(X,E)$ and 
    \[
        \int_X\langle B^{-1}_{\tilde{\psi}}f,f\rangle_{h,\omega}e^{\tilde{\psi}}dV_\omega=\int_X\langle [i\Theta_{A,h_l}\otimes\mathrm{id}_E,\Lambda_\omega]^{-1}f,f\rangle_{h\otimes h_l,\omega}dV_\omega.
    \]

    From above estimate and Bochner-Kodaira-Nakano identity, for any $\alpha\in\mathcal{D}^{p,q}(X,E)$ such that $\mathrm{supp}\, \alpha\subset U$, we have that
    \begin{align*}
        &\left|\int_X\langle f,\alpha\rangle_{h,\omega}e^{-\tilde{\psi}}dV_\omega\right|^2:=|\langle\langle f,\alpha\rangle\rangle_{\tilde{\psi}}|^2=|\langle\langle\overline{\partial}u,\alpha\rangle\rangle_{\tilde{\psi}}|^2=|\langle\langle u,\overline{\partial}^*\alpha\rangle\rangle_{\tilde{\psi}}|^2\leq ||u||^2_{h\otimes h_l}||\overline{\partial}^*\alpha||^2_{\tilde{\psi}}\\
        &\leq \int_X\langle B^{-1}_{\tilde{\psi}}f,f\rangle_{h,\omega}e^{\tilde{\psi}}dV_\omega\\
        &\qquad \times\bigl(||D'\alpha||^2_{\tilde{\psi}}+||D'^*\alpha||^2_{\tilde{\psi}}-||\overline{\partial}\alpha||^2_{\tilde{\psi}}+\langle\langle[i\Theta_{E,h}+\idd\tilde{\psi}\otimes\mathrm{id}_E,\Lambda_\omega]\alpha,\alpha\rangle\rangle_{\tilde{\psi}}\bigr)\\
        &\leq\int_X\langle B^{-1}_{\tilde{\psi}}f,f\rangle_{h,\omega}e^{\tilde{\psi}}dV_\omega\cdot\bigl(\langle\langle[i\Theta_{E,h}+\idd\tilde{\psi}\otimes\mathrm{id}_E,\Lambda_\omega]\alpha,\alpha\rangle\rangle_{\tilde{\psi}}+||D'\alpha||^2_{\tilde{\psi}}+||D'^*\alpha||^2_{\tilde{\psi}}\bigr),
    \end{align*}
    where $D'$ is the $(1,0)$ part of the Chern connection on $E\otimes A^{\otimes l}$ with respect to the metric $h\otimes h_l$. In particular, $D'|_U$ is also the $(1,0)$ part of the Chern connection on $E\otimes A^{\otimes l}|_U=E|_U$ with respect to the metric $h\otimes h_l|_U=he^{-\tilde{\psi}}$.

    Let $\alpha=B^{-1}_{\tilde{\psi}}f$, i.e. $f=B_{\tilde{\psi}}\alpha$. Then the above inequality becomes
    \begin{align*}
        &\bigl(\langle\langle B_{\tilde{\psi}}\alpha,\alpha\rangle\rangle_{\tilde{\psi}}\bigr)^2\\
        &\leq\langle\langle \alpha,B_{\tilde{\psi}}\alpha\rangle\rangle_{\tilde{\psi}}\bigl(\langle\langle[i\Theta_{E,h}+\idd\tilde{\psi}\otimes\mathrm{id}_E,\Lambda_\omega]\alpha,\alpha\rangle\rangle_{\tilde{\psi}}+||D'\alpha||^2_{\tilde{\psi}}+||D'^*\alpha||^2_{\tilde{\psi}}\bigr)\\
        &=\langle\langle \alpha,B_{\tilde{\psi}}\alpha\rangle\rangle_{\tilde{\psi}}\bigl(\langle\langle[i\Theta_{E,h},\Lambda_\omega]\alpha,\alpha\rangle\rangle_{\tilde{\psi}}+\langle\langle B_{\tilde{\psi}}\alpha,\alpha\rangle\rangle_{\tilde{\psi}}+||D'\alpha||^2_{\tilde{\psi}}+||D'^*\alpha||^2_{\tilde{\psi}}\bigr).
    \end{align*}
    Therefore we get
    \begin{align*}
        \langle\langle[i\Theta_{E,h},\Lambda_\omega]\alpha,\alpha\rangle\rangle_{\tilde{\psi}}+||D'\alpha||^2_{\tilde{\psi}}+||D'^*\alpha||^2_{\tilde{\psi}}\geq0. \tag{$\ast$}
    \end{align*}
    Using this formula $(\ast)$, we show the theorem by contradiction.

    Suppose that $A^{p,n}_{E,h,\omega}$ is not semi-positive on $X$. Then there is $x_0\in X$ and $\xi_0\in \Lambda^{p,n}T_{X,x_0}\otimes E_{x_0}$ such that $|\xi_0|=1$ and $\langle[i\Theta_{E,h},\Lambda_\omega]\xi_0,\xi_0\rangle_{h,\omega}=-2c$ for some $c>0$.

    For any small number $\varepsilon>0$, let $M_\varepsilon$ is a regularized max function (see [Dem-book,\,ChapterI,\,Section5]).
    Here, the function $M_\varepsilon$ possesses the following properties (see the proof of [Wat21,\,Proposition\,3.2]):
    \begin{itemize}
        \item [(a)] $M_\varepsilon(x,y)$ is non-decreasing in all variables, smooth and convex on $\mathbb{R}^2$,
        \item [(b)] $\max\{x,y\}\leq M_\varepsilon(x,y)\leq\max\{x,y\}+\varepsilon$ and
        \item [(c)] $M_\varepsilon(x,y)=\max\{x,y\}$ on $\{(x,y)\in\mathbb{R}^2\mid|x-y|\geq2\varepsilon\}$.
    \end{itemize}

    Let $(U,(z_1,\cdots,z_n))$ be a holomorphic coordinate in $X$ centered at $x_0$ such that $\omega=i\sum dz_j\wedge d\overline{z}_j+O(|z|^2)$. 
    For any small number $R>0$, we define $B_R:=\{z\in U\mid |z|<R\}$ such that $B_{2R}\subset U$. Let $\varphi=|z|^2-R^2$ and $\varphi_\varepsilon=|z|^2-(R^2+\varepsilon^2)$, where $\varepsilon>0$ is enough small with respect to $R$.
    Then we define the smooth strictly plurisubharmonic function $\psi_m$ on $B_{2R}$ by $\psi_m:=M_{\varepsilon/2}(\varphi,m\varphi_\varepsilon)$. Since the above conditions $(b)$ and $(c)$, for any $m\geq 4$ we have that $\psi_m=\max\{\varphi,m\varphi_\varepsilon\}$ on $(B_{R+2\varepsilon}\setminus B_R)^c$, i.e.
    \begin{align*}
        \psi_m|_{B_R}=\varphi<0,\qquad \psi_m|_{B_{2R}\setminus B_{R+2\varepsilon}}=m\varphi_\varepsilon>0, 
    \end{align*}
    and that $\max\{\varphi,m\varphi_\varepsilon\}\leq\psi_m\leq\max\{\varphi,m\varphi_\varepsilon\}+\varepsilon$.

    There exists a holomorphic frame $(e_1,\cdots,e_r)$ of $E$ on $U$ such that $h=I+O(|z|^2)$ at $z=0$ (see [Wel80,\,ChapterIII,\,Lemma\,2.3]). 
    Then we have that 
    \begin{align*}
        D'_h&=\partial+h^{-1}\partial h=\partial+(I+O(|z|^2))O(|z|)=\partial+O(|z|),~ \mathrm{and}\\
        D'_{e^{-m\varphi}}&=\partial+e^{m\varphi}\partial e^{-m\varphi}=\partial-\partial m\varphi=\partial-m\sum\overline{z}_jdz_j.
    \end{align*}
    From Proposition \ref{posi line bdl metric}, we can construct a Hermitian metric $h_l$ on the line bundle $A^{\otimes l}$ such that $h_l=e^{-\tilde{\psi}_m}$ on $U$ with $\tilde{\psi}_m|_{B_{2R}}=\psi_m$.
    Hence, we get 
    \begin{align*}
        D'|_{B_R}&=D'_{he^{-\varphi}}=\partial-\sum\overline{z}_jdz_j+O(|z|),\\ 
        D'|_{B_{2R}\setminus B_{R+2\varepsilon}}&=D'_{he^{-m\varphi_\varepsilon}}=\partial-m\sum\overline{z}_jdz_j+O(|z|),
    \end{align*}
    where $O(|z|)$ is independent of $m$. From the fact
    \begin{align*}
        \omega&=i\sum dz_j\wedge d\overline{z}_j+O(|z|^2)=\idd\varphi+O(|z|^2),\\
        B_{\varphi}&=[\idd\varphi\otimes\mathrm{id}_E,\Lambda_\omega]=[\omega\otimes\mathrm{id}_E+O(|z|^2),\Lambda_\omega]\\
        &=p\cdot\mathrm{id}_E+O(|z|^2)\,\,\,\,\,\,\,\,~\mathrm{on}~\,\Lambda^{p,n}T_X\otimes E,
    \end{align*}
    we have that 
    \begin{align*}
        B_{\widetilde{\psi}_m}|_{B_R}&=B_{\varphi}=p\cdot\mathrm{id}_E+O(|z|^2),\\
        B_{\widetilde{\psi}_m}|_{B_{2R}\setminus B_{R+2\varepsilon}}&=B_{m\varphi_\varepsilon}=[m\idd\varphi_\varepsilon\otimes\mathrm{id}_E,\Lambda_\omega]=mp\cdot\mathrm{id}_E+O(|z|^2)
    \end{align*}
    on $\Lambda^{p,n}T_X\otimes E$.

    Let $\xi=\sum \xi_{J,K,\lambda}dz_J\wedge d\overline{z}_K\otimes e_\lambda\in\mathcal{E}^{p,n}(U,E)$, with constant coefficients such that $\xi(x_0)=\xi_0$. We may assume
    \begin{align*}
        \langle[i\Theta_{E,h},\Lambda_\omega]\xi,\xi\rangle_{h,\omega}<-c
    \end{align*}
    on $U$. For any small number $R>0$.

    Choose $\chi\in\mathcal{D}(B_{2R},\mathbb{R}_{\geq 0})$ such that $\chi|_{B_{R+2\varepsilon}}=1$. Let 
    \[ 
        v=\frac{1}{n}\sum_{J,N,\lambda,j} (-1)^{|J|}\varepsilon(j,N)\overline{z}_j\xi_{J,N,\lambda}\chi(z)dz_J\wedge d\overline{z}_{N\setminus j}\otimes e_\lambda\in\mathcal{D}^{p,n-1}(X,E),
    \]
    then from $(-1)^{|J|}\varepsilon(j,N)d\overline{z}_j\wedge dz_J\wedge d\overline{z}_{N\setminus j}=dz_J\wedge d\overline{z}_N$, we have that 
    \begin{align*}
        \overline{\partial}v|_{B_{R+2\varepsilon}}&=\frac{1}{n}\overline{\partial}\sum_{J,N,\lambda,j} (-1)^{|J|}\varepsilon(j,N)\overline{z}_j\xi_{J,N,\lambda}dz_J\wedge d\overline{z}_{N\setminus j}\otimes e_\lambda\\
        &=\frac{1}{n}\sum_{J,N,\lambda,j} (-1)^{|J|}\varepsilon(j,N)\xi_{J,N,\lambda}\sum^n_{l=1}\frac{\partial}{\partial\overline{z}_l}\overline{z}_jdz_l\wedge dz_J\wedge d\overline{z}_{N\setminus j}\otimes e_\lambda\\
        &=\frac{1}{n}\sum_{J,N,\lambda,j} (-1)^{|J|}\varepsilon(j,N)\xi_{J,N,\lambda}dz_j\wedge dz_J\wedge d\overline{z}_{N\setminus j}\otimes e_\lambda\\
        &=\frac{1}{n}\sum_{J,N,\lambda}\sum_{j\in N} \xi_{J,N,\lambda}dz_J\wedge d\overline{z}_N\otimes e_\lambda\\
        &=\xi,
    \end{align*} 
    where if $j\notin N$ then $\varepsilon(j,N)=0$.
    Therefore let $f:=\overline{\partial}v\in\mathcal{D}^{p,n}(U,E)=\mathcal{D}^{p,n}(U,E\otimes A^{\otimes l})$ then $\overline{\partial}f=0$ and $f=\xi$ with constant coefficients on $B_{R+2\varepsilon}$.
    We define $\alpha_m=B^{-1}_{\tilde{\psi}_m}f\in \mathcal{D}^{p,n}(U,E\otimes A^{\otimes l})$. From $i[\Lambda_\omega,\overline{\partial}]=D'^*$ (cf. [Dem10,\,Chapter4]), we get 
    \begin{align*}
        D'\alpha_m=D'B^{-1}_{\tilde{\psi}_m}f&=D'\Bigl(\frac{1}{p}\xi+O(|z|^2)\Bigr)\\
        &=\frac{-1}{p}\xi\wedge\sum\overline{z}_jdz_j+O(|z|^3),\\
        D'^*\alpha_m=D'^*B^{-1}_{\tilde{\psi}_m}f&=D'^*\Bigl(\frac{1}{p}\xi+O(|z|^2)\Bigr)=O(|z|^2)
    \end{align*}
    on $B_R$. And we get 
    \begin{align*}
        D'\alpha_m=D'B^{-1}_{\tilde{\psi}_m}f&=\frac{1}{mp}D'f+O(|z|^2)\\
        &=\frac{1}{mp}\Bigl(\partial f-f\wedge\sum\overline{z}_jdz_j\Bigr)+O(|z|^3),\\
        D'^*\alpha_m=D'^*B^{-1}_{\tilde{\psi}_m}f&=\frac{1}{mp}D'^*f+O(|z|^2)
    \end{align*}
    on $B_{2R}\setminus B_{R+2\varepsilon}$.
    Since $D'\alpha_m(0)=0$ and $\alpha_m$ is constant coefficients on $B_R$, so after shrinking $R$, we can get 
    \begin{align*}
        |D'\alpha_m|^2_{h,\omega}&=\Bigl(\frac{1}{p}\Bigr)^2|\xi\wedge\sum\overline{z}_jdz_j|^2_{h,\omega}\\
        &\leq\Bigl(\frac{1}{p}\Bigr)^2|z|^2|\xi|^2_{h,\omega}\leq \frac{1}{4}c,\\
        |D'^*\alpha_m|^2_{h,\omega}&=|O(|z|^2)|^2_{h,\omega}\leq\frac{1}{4}c
    \end{align*}
    on $B_{R}$. From $f$ and $\alpha_m$ have compact support in $B_{2R}$, there is a constant $C$, such that 
    \begin{align*}
        |\langle[i\Theta_{E,h},\Lambda_\omega]\alpha_m,\alpha_m\rangle_{h,\omega}|&=\Bigl(\frac{1}{mp}\Bigr)^2|\langle[i\Theta_{E,h},\Lambda_\omega]f,f\rangle_{h,\omega}|_{h,\omega}+O(|z|^2)\leq\frac{C}{m^2},\\
        |D'\alpha_m|^2_{h,\omega}&=\Bigl(\frac{1}{mp}\Bigr)^2|D'f|^2_{h,\omega}+O(|z|^2)\\
        &=\Bigl(\frac{1}{mp}\Bigr)^2\bigl(|\partial f|^2_{h,\omega}+m^2|f\wedge\sum z_jdz_j|^2_{h,\omega}\bigr)+O(|z|^2)\\
        &\leq\frac{1}{p^2}\Bigl(\frac{1}{m^2}|\partial f|^2_{h,\omega}+|z|^2|f|^2_{h,\omega}\Bigr)+O(|z|^2)\leq C\Bigl(\frac{1}{m^2}+|z|^2\Bigr),\\
        |D'^*\alpha_m|^2_{h,\omega}&=\Bigl(\frac{1}{mp}\Bigr)^2|D'^*f|^2_{h,\omega}+O(|z|^2)\leq\frac{C}{m^2}
    \end{align*}
    on $B_{2R}\setminus B_{R+2\varepsilon}$ and that 
    \begin{align*}
        |\langle[i\Theta_{E,h},\Lambda_\omega]\alpha_m,\alpha_m\rangle_{h,\omega}|\leq C,\,\,
        |D'\alpha_m|^2_{h,\omega}\leq C,\,\,
        |D'^*\alpha_m|^2_{h,\omega}\leq C
    \end{align*}
    on $B_{R+2\varepsilon}\setminus B_R$ for any $m\geq4$.

    Then we consider the left-hand side of $(\ast)$ with $\alpha$ and $\psi$ replaced by $\alpha_m$ and $\psi_m$ defined as above.
    \begin{align*}
        &\langle\langle[i\Theta_{E,h},\Lambda_\omega]\alpha_m,\alpha_m\rangle\rangle_{\tilde{\psi}_m}+||D'\alpha_m||^2_{\tilde{\psi}_m}+||D'^*\alpha_m||^2_{\tilde{\psi}_m}\\
        &=\int_{B_R}\langle[i\Theta_{E,h},\Lambda_\omega]\alpha_m,\alpha_m\rangle_{h,\omega} e^{-\varphi}dV_\omega+\int_{B_R}\bigl(|D'\alpha_m|_{h,\omega}^2+|D'^*\alpha_m|_{h,\omega}^2\bigr)e^{-\varphi}dV_\omega\\
        &\quad+\int_{B_{R+2\varepsilon}\setminus B_R}\langle[i\Theta_{E,h},\Lambda_\omega]\alpha_m,\alpha_m\rangle_{h,\omega} e^{-\psi_m}dV_\omega+\int_{B_{R+2\varepsilon}\setminus B_R}\bigl(|D'\alpha_m|_{h,\omega}^2+|D'^*\alpha_m|_{h,\omega}^2\bigr)e^{-\psi_m}dV_\omega\\
        &\quad+\int_{B_{2R}\setminus B_{R+2\varepsilon}}\langle[i\Theta_{E,h},\Lambda_\omega]\alpha_m,\alpha_m\rangle_{h,\omega} e^{-m\varphi_\varepsilon}dV_\omega+\int_{B_{2R}\setminus B_{R+2\varepsilon}}\bigl(|D'\alpha_m|_{h,\omega}^2+|D'^*\alpha_m|_{h,\omega}^2\bigr)e^{-m\varphi_\varepsilon}dV_\omega\\
        &\leq-\frac{c}{2}\int_{B_R}e^{-\varphi}dV_\omega+3C\int_{B_{R+2\varepsilon}\setminus B_R}e^{-\psi_m}dV_\omega+C\int_{B_{2R}\setminus B_{R+2\varepsilon}}\Bigl(\frac{3}{m^2}+|z|^2\Bigr)e^{-m\varphi_\varepsilon}dV_\omega\\
        &\leq-\frac{c}{2}\mathrm{Vol}(B_R)+C\Bigl(3\mathrm{Vol}(B_{R+2\varepsilon}\setminus B_R)+\int_{B_{2R}\setminus B_{R+2\varepsilon}}\Bigl(\frac{3}{m^2}+|z|^2\Bigr)e^{-m\varphi_\varepsilon}dV_\omega\Bigr),
    \end{align*}
    where $\varphi<0$ on $B_R$ and $\psi_m,\varphi_\varepsilon>0$ on $B_{2R}\setminus \overline{B}_R$.
    Since $\lim_{m\to+\infty}m\varphi_\varepsilon(z)=+\infty$ for $z\in B_{2R}\setminus \overline{B}_R$, if $4\leq m\to+\infty$ then we get 
    \begin{align*}
        0\leq\int_{B_{2R}\setminus {R+\varepsilon}}\Bigl(\frac{3}{m^2}+|z|^2\Bigr)e^{-m\varphi_\varepsilon}dV_\omega\leq(1+4R^2)\int_{B_{2R}\setminus {R+\varepsilon}}e^{-m\varphi_\varepsilon}dV_\omega\longrightarrow 0.
    \end{align*}
    Here, if $1/2>\varepsilon\to 0$ then
    \begin{align*}
        \mathrm{Vol}(B_{R+2\varepsilon}\setminus B_R)&=\mathrm{Vol}(B_{R+2\varepsilon})-\mathrm{Vol}(B_R)=C_n((R+2\varepsilon)^{2n}-R^{2n})\\
        &=\varepsilon C_n\sum^{2n}_{k=1}
        \begin{pmatrix}
            n \\
            k \\
        \end{pmatrix}
        R^{2n-k}(2\varepsilon)^{k-1}\leq
        2\varepsilon C_n\sum^{2n}_{k=1}
        \begin{pmatrix}
            n \\
            k \\
        \end{pmatrix}
        R^{2n-k}\longrightarrow 0,
    \end{align*}
    where $C_n$ is a constant depending only on $n$.

    Therefore we obtain that 
    \begin{align*}
        \langle\langle[i\Theta_{E,h},\Lambda_\omega]\alpha_m,\alpha_m\rangle\rangle_{\tilde{\psi}_m}+||D'\alpha_m||^2_{\tilde{\psi}_m}+||D'^*\alpha_m||^2_{\tilde{\psi}_m}<0
    \end{align*}
    for $m>>4$ and $1/2>>\varepsilon>0$, which contradicts to the inequality $(\ast)$.
    Hence, we have that $A^{p,n}_{E,h,\omega}\geq0$.

    Finally, we show that $(E,h)$ satisfies the $(p,n)$-$L^2_\omega$-estimate condition on $X$ if $A^{p,n}_{E,h,\omega}\geq0$.
    As above, we have that $[i\Theta_{A,h_A}\otimes\mathrm{id}_E,\Lambda_\omega]>0$ on $\Lambda^{p,n}T^*_X\otimes E\otimes A$.
    Then by the inequality
    \begin{align*}
        A^{p,n}_{E\otimes A,h\otimes h_A,\omega}&=[i\Theta_{E,h},\Lambda_\omega]+[i\Theta_{A,h_A}\otimes\mathrm{id}_E,\Lambda_\omega]\\
        &=A^{p,n}_{E,h,\omega}+[i\Theta_{A,h_A}\otimes\mathrm{id}_E,\Lambda_\omega]\geq [i\Theta_{A,h_A}\otimes\mathrm{id}_E,\Lambda_\omega]>0
    \end{align*}
    and Theorem \ref{L^2-estimate completeness (p,n)-forms}, for any $f\in\mathcal{D}^{p,n}(X,E\otimes A)$ with $\overline{\partial}f=0$, there is $u\in L^2_{p,n-1}(X,E\otimes A)$ satisfying $\overline{\partial}u=f$ and
    \begin{align*}
        \int_X|u|^2_{h\otimes h_A,\omega}dV_\omega\leq\int_X\langle(A^{p,n}_{E\otimes A,h\otimes h_A,\omega})^{-1}f,f\rangle_{h\otimes h_A,\omega}dV_\omega<+\infty.
    \end{align*} 
    Since $A^{p,n}_{E,h,\omega}\geq0$, we have the inequality
    \begin{align*}
        \langle(A^{p,n}_{E\otimes A,h\otimes h_A,\omega})^{-1}f,f\rangle_{h\otimes h_A,\omega}\leq\langle[i\Theta_{A,h_A}\otimes\mathrm{id}_E,\Lambda_\omega]^{-1}f,f\rangle_{h\otimes h_A,\omega}.
    \end{align*}
    Hence, $(E,h)$ satisfies the $(p,n)$-$L^2_\omega$-estimate condition.
\end{proof}

Using the similar proof technique as Theorem \ref{Main thm 2} and [DNWZ20,\,Theorem\,1.1], Theorem \ref{Main thm 1} is proved using the following theorem.
In the case of $(n,q)$-forms, i.e. Theorem \ref{Main thm 1}, the proof itself is easier than in $(p,n)$-forms because $D'\alpha_m$ vanishes.

\begin{theorem}$(\mathrm{cf.~[Dem}$-$\mathrm{book,~ChapterVIII,~Theorem~6.1]})$
    Let $(X,\widehat{\omega})$ be a complete \kah manifold, $\omega$ be another \kah metric which is not necessarily complete and $(E,h)$ be a holomorphic vector bundle which satisfies $A^{n,q}_{E,h,\omega}\geq0$.
    Then for any $\overline{\partial}$-closed $f\in L^2_{n,q}(X,E,h,\omega)$ there exists $u\in L^2_{n,q-1}(X,E,h,\omega)$ satisfies $\overline{\partial}u=f$ and 
    \begin{align*}
        \int_X|u|^2_{h,\omega}dV_{\omega}\leq\int_X\langle(A^{n,q}_{E,h,\omega})^{-1}f,f\rangle_{h,\omega}dV_{\omega},
    \end{align*}
    where we assume that the right-hand side is finite.
\end{theorem}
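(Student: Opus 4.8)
The plan is to run the Hörmander--Andreotti--Vesentini existence method, in the form due to Demailly, on the complete \kah metric and then to descend to the possibly non-complete $\omega$ by the regularization $\omega_\varepsilon:=\omega+\varepsilon\widehat{\omega}$, which is a complete \kah metric for every $\varepsilon>0$. So I would first settle the case in which the metric carrying the $L^2$ structure is complete. Writing $T=\overline{\partial}\colon L^2_{n,q-1}(X,E,h,\omega)\to L^2_{n,q}(X,E,h,\omega)$ and $S=\overline{\partial}\colon L^2_{n,q}\to L^2_{n,q+1}$, so that $S\circ T=0$, the standard duality lemma reduces ``solve $\overline{\partial}u=f$ with the stated bound'' to the a priori estimate
\[
    \Big|\int_X\langle f,v\rangle_{h,\omega}\,dV_\omega\Big|^2\le\Big(\int_X\langle (A^{n,q}_{E,h,\omega})^{-1}f,f\rangle_{h,\omega}\,dV_\omega\Big)\,\|\overline{\partial}^*_E v\|^2_{h,\omega}
\]
for all $v\in\mathrm{Dom}(\overline{\partial}^*_E)\cap\mathrm{Dom}(\overline{\partial}_E)$, after which Riesz representation produces the desired $u$.

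To prove this estimate I would decompose $v=v_1+v_2$ with $v_1\in\ker\overline{\partial}$ and $v_2\perp\ker\overline{\partial}$. Since $\overline{\partial}f=0$, only $v_1$ contributes, $\int_X\langle f,v\rangle=\int_X\langle f,v_1\rangle$; and since $\mathrm{Im}\,\overline{\partial}\subset\ker\overline{\partial}$ gives $v_2\in\ker\overline{\partial}^*$, one has $\overline{\partial}^*_E v=\overline{\partial}^*_E v_1$. A pointwise Cauchy--Schwarz inequality for the semi-positive operator $A^{n,q}_{E,h,\omega}$ yields $|\langle f,v_1\rangle|^2\le\langle (A^{n,q}_{E,h,\omega})^{-1}f,f\rangle\,\langle A^{n,q}_{E,h,\omega}v_1,v_1\rangle$ (this needs only $A^{n,q}_{E,h,\omega}\ge0$ together with the finiteness of the curvature integral, which encodes $f\in\mathrm{Im}(A^{n,q}_{E,h,\omega})^{1/2}$), while the Bochner--Kodaira--Nakano identity on the \kah manifold gives $\int_X\langle A^{n,q}_{E,h,\omega}v_1,v_1\rangle\,dV_\omega\le\|\overline{\partial}v_1\|^2+\|\overline{\partial}^*_E v_1\|^2=\|\overline{\partial}^*_E v_1\|^2$, using $v_1\in\ker\overline{\partial}$. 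Combining the three facts gives the estimate. Completeness enters precisely here, through the Andreotti--Vesentini density lemma, which lets me establish BKN first for $v\in\mathcal{D}^{n,q}(X,E)$ and then extend it to all admissible $v$ by graph-norm density; note also that since we are in bidegree $(n,q)$ the term $\|D'v_1\|^2+\|D'^*v_1\|^2$ in BKN is the only extra nonnegative contribution and may simply be dropped.

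For the non-complete $\omega$ I would apply the complete case to each $(X,\omega_\varepsilon)$, producing $u_\varepsilon$ with $\overline{\partial}u_\varepsilon=f$ and $\int_X|u_\varepsilon|^2_{h,\omega_\varepsilon}dV_{\omega_\varepsilon}\le\int_X\langle (A^{n,q}_{E,h,\omega_\varepsilon})^{-1}f,f\rangle_{h,\omega_\varepsilon}dV_{\omega_\varepsilon}$. The decisive ingredient, special to bidegree $(n,q)$, is the monotonicity of the curvature integrand in the metric: for a $(n,q)$-form and $\omega_\varepsilon\ge\omega$ one has, pointwise after adapting coordinates to both metrics, $\langle (A^{n,q}_{E,h,\omega_\varepsilon})^{-1}f,f\rangle_{h,\omega_\varepsilon}dV_{\omega_\varepsilon}\le\langle (A^{n,q}_{E,h,\omega})^{-1}f,f\rangle_{h,\omega}dV_\omega$. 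This bounds all the right-hand sides uniformly by the finite $\omega$-integral, hence the $u_\varepsilon$ are uniformly bounded in $L^2$ on compact sets; extracting a weak limit $u_\varepsilon\rightharpoonup u$ and using that $\omega_\varepsilon\to\omega$, $dV_{\omega_\varepsilon}\to dV_\omega$, I would obtain $\overline{\partial}u=f$ and $\int_X|u|^2_{h,\omega}dV_\omega\le\int_X\langle (A^{n,q}_{E,h,\omega})^{-1}f,f\rangle_{h,\omega}dV_\omega$.

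The hard part is exactly this passage to the limit. For a general bidegree $(p,q)$ the semi-positivity $A^{p,q}_{E,h,\omega_\varepsilon}\ge0$ does \emph{not} follow from $A^{p,q}_{E,h,\omega}\ge0$ once the metric is enlarged --- already for $(n,q)$ with $q\ge2$ the smallest partial sums of curvature eigenvalues can turn negative under $\omega\mapsto\omega+\varepsilon\widehat{\omega}$ --- so the deformation step is not automatic, and the whole argument has to be anchored to the original $\omega$ through the monotonicity of the curvature term rather than through naive positivity on $(X,\omega_\varepsilon)$. I therefore expect the substance of the proof to be the pointwise comparison inequality for $(n,q)$-forms and the verification that it survives integration and the weak limit; the functional analysis and the BKN step, by contrast, are routine once the completeness of $\omega_\varepsilon$ and this comparison are in hand. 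This is also the feature that singles out the extreme bidegrees $(n,q)$ and, dually via Theorem \ref{A_E and A_E^*}, $(p,n)$, from the general case.
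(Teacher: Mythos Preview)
The paper gives no proof of this statement; it is quoted as a black box from Demailly's book (Chapter~VIII, Theorem~6.1) and used to feed into Theorem~\ref{Main thm 1}. Your outline is precisely Demailly's argument---the a~priori estimate via Bochner--Kodaira--Nakano on the complete metric $\omega_\varepsilon=\omega+\varepsilon\widehat{\omega}$, Andreotti--Vesentini density, the pointwise monotonicity of $\langle(A^{n,q}_{E,h,\omega_\varepsilon})^{-1}f,f\rangle_{h,\omega_\varepsilon}\,dV_{\omega_\varepsilon}$ in $\varepsilon$ for $(n,q)$-forms, and weak-limit extraction---and your identification of this monotonicity (rather than positivity of $A^{n,q}_{E,h,\omega_\varepsilon}$, which can indeed fail for $q\ge2$) as the crux is correct.
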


From Theorem \ref{Main thm 1} and Theorem \ref{A_E and A_E^*}, we obtain the following corollary.

\begin{corollary}\label{(0,q) or (p,0) characterization}
    Let $(X,\omega)$ be a \kah manifold of dimension $n$ which admits a positive holomorphic Hermitian line bundle and $(E,h)$ be a holomorphic Hermitian vector bundle over $X$. Let $p$ and $q$ be positive integers with $q\leq n-1$.
    Then we have the following
    \begin{itemize}
        \item $(E^*,h^*)$ satisfies the $(n,n-q)$-$L^2_\omega$-estimate condition if and only if $A^{0,q}_{E,h,\omega}\geq0$.
        \item $(E^*,h^*)$ satisfies the $(n-p,n)$-$L^2_\omega$-estimate condition if and only if $A^{p,0}_{E,h,\omega}\geq0$.
    \end{itemize} 
\end{corollary}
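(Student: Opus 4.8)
The plan is to deduce both equivalences by combining one of the two main theorems with the curvature-operator duality of Theorem \ref{A_E and A_E^*}, applied to the dual bundle rather than to $(E,h)$ itself. The decisive remark is that, under the canonical identifications $(E^*)^* \cong E$ and $(h^*)^* = h$, running Theorem \ref{A_E and A_E^*} with $(E^*, h^*)$ in the role of $(E,h)$ gives, for every bidegree,
\[
A^{p',q'}_{E^*,h^*,\omega} \geq 0 \iff A^{\,n-p',\,n-q'}_{E,h,\omega} \geq 0.
\]
Since $h$ is smooth, its dual $h^*$ is again a smooth Hermitian metric, so $(E^*, h^*)$ is an admissible holomorphic Hermitian bundle for both main theorems and no new hypotheses are required.

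For the first item I would apply Theorem \ref{Main thm 1} to $(E^*, h^*)$ in degree $n-q$. The hypothesis $1 \leq q \leq n-1$ forces $n-q \in \{1,\ldots,n-1\}$, so the positivity requirement on the degree in Theorem \ref{Main thm 1} is satisfied, and the theorem yields: $(E^*,h^*)$ satisfies the $(n,n-q)$-$L^2_\omega$-estimate condition if and only if $A^{n,n-q}_{E^*,h^*,\omega} \geq 0$. Specializing the displayed duality to $(p',q') = (n,n-q)$ turns the right-hand side into $A^{0,q}_{E,h,\omega} \geq 0$, and chaining the two equivalences gives the first bullet.

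The second item is symmetric, with Theorem \ref{Main thm 2} replacing Theorem \ref{Main thm 1}. Applying it to $(E^*, h^*)$ with first index $n-p$ (a nonnegative integer, using the implicit bound $p \leq n$) gives: $(E^*,h^*)$ satisfies the $(n-p,n)$-$L^2_\omega$-estimate condition if and only if $A^{n-p,n}_{E^*,h^*,\omega} \geq 0$. The duality with $(p',q') = (n-p,n)$ rewrites the latter as $A^{p,0}_{E,h,\omega} \geq 0$, proving the second bullet.

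There is essentially no analytic work left here: the entire $L^2$-solvability content is absorbed into Theorems \ref{Main thm 1} and \ref{Main thm 2}, while Theorem \ref{A_E and A_E^*} supplies the purely algebraic sign and positivity transfer between $E$ and $E^*$. The one place I would be most careful is the index bookkeeping, namely checking that the shifted bidegrees $(n,n-q)$ and $(n-p,n)$ lie in the admissible ranges (this is exactly what $q \leq n-1$ and $p \leq n$ guarantee) and that each invocation of Theorem \ref{A_E and A_E^*} is made with $(E^*,h^*)$ as the base bundle, so that the bidual identification is used in the correct direction.
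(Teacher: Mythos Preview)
Your proposal is correct and follows essentially the same approach as the paper: the paper simply states that the corollary follows from Theorem~\ref{Main thm 1} and Theorem~\ref{A_E and A_E^*}, and you have spelled out precisely that deduction (together with Theorem~\ref{Main thm 2} for the second item, which the paper's one-line citation implicitly requires). Your attention to the index ranges and to applying Theorem~\ref{A_E and A_E^*} with $(E^*,h^*)$ in the role of the base bundle is exactly what is needed.
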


Here, the characterization of semi-negative curvature operator, i.e. 

$A^{n,q}_{E,h,\omega}\leq0$, $A^{0,q}_{E,h,\omega}\leq0$ for $q\geq0$, $A^{p,n}_{E,h,\omega}\leq0$ for $p\geq1$ and $A^{p,0}_{E,h,\omega}\leq0$ for $n-1\geq p$,\\
by $L^2$-estimates can be obtained immediately by using Theorem \ref{(p,q) and (n-q,n-p)}, Theorem \ref{Main thm 1} and Corollary \ref{(0,q) or (p,0) characterization}.

\section{Characterizations of Nakano semi-negativity}\label{section:4}

In this section, we obtain characterizations of Nakano semi-negativity by $L^2$-estimate, i.e. Theorem \ref{Main thm 2}, from Theorem \ref{Main thm 1} and properties of the curvature operator.
Let $X$ be a Hermitian manifold and $(E,h)$ be a holomorphic Hermitian vector bundle. 
We denote the condition that there exists a Hermitian metric $\omega$ on $X$ such that $A^{p,q}_{E,h,\omega}=[i\Theta_{E,h},\Lambda_\omega]>0$ on $X$ by $A^{p,q}_{E,h}>0$ on $X$.


\begin{lemma}\label{Nakano iff A(n,1)}$(\mathrm{cf.~[DNWZ20,~Lemma ~2.5]})$
    Let $X$ be a complex manifold and $(E,h)$ be a holomorphic Hermitian vector bundle over $X$.
    Then $(E,h)$ is Nakano positive if and only if for any local coordinates $U$, $A^{n,1}_{E,h}>0$ on $U$.
    In particular, 
    if $X$ is a Hermitian manifold then for any Hermitian metric $\omega$ on $X$ we have that
    \begin{align*}
        (E,h)>_{Nak}0\quad \iff\quad  A^{n,1}_{E,h}>0~ on~\forall U \quad \iff\quad  A^{n,1}_{E,h,\omega}>0,
    \end{align*}
    where $U$ is any local coordinates.

    Moreover, we obtain the claim replaced positive with semi-positive or negative, semi-negative respectively.
\end{lemma}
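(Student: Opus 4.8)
The plan is to reduce everything to a single pointwise computation: that on $(n,1)$-forms the curvature operator reproduces exactly the Nakano form $\theta_{E,h}$, read off through a natural $\omega$-orthonormal identification. Since $\theta_{E,h}$ is defined with no reference to any metric, this will simultaneously give the equivalence with Nakano (semi-)positivity and the fact that the sign of $A^{n,1}_{E,h,\omega}$ is independent of the choice of $\omega$.

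First I would fix a point $x_0$ and a Hermitian metric $\omega$, choose coordinates $(z_1,\ldots,z_n)$ so that $(\partial/\partial z_j)$ is $\omega$-orthonormal at $x_0$, and write $i\Theta_{E,h,x_0}$ as in Proposition \ref{calculate (p,q)-forms}. Specializing that proposition to $p=n$, $q=1$, so that $J=L=N=\{1,\ldots,n\}$ is forced while $K=\{k\}$, $M=\{m\}$ are single indices, I would observe that the third sum vanishes: the constraint $L\setminus j=J\setminus k$ with $L=J=N$ forces $j=k$, contradicting $j\neq k$. The surviving diagonal term reduces, since $\sum_{j\in J}=\sum_{1\le j\le n}$, to $\sum_{k}c_{kk\lambda\mu}u_{N,k,\lambda}\overline{u}_{N,k,\mu}$, while the second sum, with $K=\{j\}$ and $M=\{k\}$, contributes $\sum_{j\neq k}c_{jk\lambda\mu}u_{N,j,\lambda}\overline{u}_{N,k,\mu}$. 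Combining these gives
\[
    \langle A^{n,1}_{E,h,\omega}u,u\rangle_\omega=\sum_{j,k,\lambda,\mu}c_{jk\lambda\mu}u_{N,j,\lambda}\overline{u}_{N,k,\mu}.
\]

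The key step is then to recognize the right-hand side as $\theta_{E,h}(\tilde{u},\tilde{u})$, where $\tilde{u}:=\sum_{k,\lambda}u_{N,k,\lambda}\,\partial/\partial z_k\otimes e_\lambda\in T_{X,x_0}\otimes E_{x_0}$. Because $p=n$ freezes the holomorphic multi-index to $J=N$, the assignment $u\mapsto\tilde{u}$ is a linear bijection $\Lambda^{n,1}T^*_{X,x_0}\otimes E_{x_0}\to T_{X,x_0}\otimes E_{x_0}$ sending nonzero forms to nonzero vectors. Hence $A^{n,1}_{E,h,\omega}$ is positive (resp. semi-positive, negative, semi-negative) at $x_0$ if and only if $\theta_{E,h}$ is, and since $\theta_{E,h}$ is intrinsic this equivalence holds verbatim for every Hermitian metric $\omega$. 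Letting $x_0$ vary gives $(E,h)>_{Nak}0\iff A^{n,1}_{E,h,\omega}>0$ for any fixed $\omega$, and the same for the remaining three signs.

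The statements involving local coordinates then follow formally. For the first assertion, on any chart $U$ I may take the standard Euclidean metric in the given coordinates as $\omega$; the pointwise equivalence shows $A^{n,1}_{E,h,\omega}>0$ on $U$ precisely when $(E,h)$ is Nakano positive on $U$, so the existence of some local metric witnessing $A^{n,1}_{E,h}>0$ on every $U$ is equivalent to Nakano positivity on $X$. The chain $(E,h)>_{Nak}0\iff A^{n,1}_{E,h}>0$ on $\forall U\iff A^{n,1}_{E,h,\omega}>0$ is immediate once the sign of $A^{n,1}_{E,h,\omega}$ is known to be both metric-independent and locally detected. The only point requiring genuine care is exactly this metric-independence: although the bijection $u\mapsto\tilde{u}$ and the inner product $\langle\cdot,\cdot\rangle_\omega$ both depend on $\omega$, their combination collapses onto the fixed form $\theta_{E,h}$, and this is where I expect the bookkeeping to demand the most attention.
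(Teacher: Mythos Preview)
The paper does not supply its own proof of this lemma; it is stated with a reference to [DNWZ20,\,Lemma~2.5] and then used. Your argument is correct and is exactly the natural one in the context of this paper: specializing Proposition~\ref{calculate (p,q)-forms} to $p=n$, $q=1$ collapses the formula to $\langle A^{n,1}_{E,h,\omega}u,u\rangle_\omega=\sum_{j,k,\lambda,\mu}c_{jk\lambda\mu}u_{N,j,\lambda}\overline{u}_{N,k,\mu}=\theta_{E,h}(\tilde u,\tilde u)$ under the evident bijection $u\mapsto\tilde u$, and since the Nakano form $\theta_{E,h}$ is intrinsic, the metric-independence of the sign of $A^{n,1}_{E,h,\omega}$ and the equivalence with Nakano (semi-)positivity/negativity follow at once.
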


From Theorem \ref{(p,q) and (n-q,n-p)}, Theorem \ref{A_E and A_E^*} and Lemma \ref{Nakano iff A(n,1)}, we obtain the following corollary.

\begin{corollary}\label{Nakano & curvature operator}
    Let $X$ be a complex manifold and $(E,h)$ be a holomorphic Hermitian vector bundle over $X$.
    Then we have that 
    \begin{align*}
        (E,h)>_{Nak}0 & \iff A^{n,1}_{E,h}\,\,\,>0  ~on~U \iff A^{0,n-1}_{E^*,h^*}>0 ~on~U\\
        & \iff A^{n-1,0}_{E,h}\!<0 ~on~U \iff A^{1,n}_{E^*,h^*}<0 ~on~U, ~\, and\\
        (E,h)<_{Nak}0 & \iff A^{n,1}_{E,h}\,\,\,<0  ~on~U \iff A^{0,n-1}_{E^*,h^*}<0 ~on~U\\
        & \iff A^{n-1,0}_{E,h}\!>0 ~on~U \iff A^{1,n}_{E^*,h^*}>0 ~on~U,
    \end{align*}
    where $U$ is any local coordinates. In particular if $X$ is a Hermitian manifold then $U$ can be changed to $X$.
\end{corollary}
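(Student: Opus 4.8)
The plan is to chain the three structural results already established, namely Lemma \ref{Nakano iff A(n,1)}, Theorem \ref{A_E and A_E^*}, and Theorem \ref{(p,q) and (n-q,n-p)}, starting from the characterization of Nakano positivity via the $(n,1)$-curvature operator and transporting it through the Serre-type duality of Theorem \ref{A_E and A_E^*} and the reflection $(p,q)\mapsto(n-q,n-p)$ of Theorem \ref{(p,q) and (n-q,n-p)}. First I would invoke Lemma \ref{Nakano iff A(n,1)} to obtain the base equivalence $(E,h)>_{Nak}0 \iff A^{n,1}_{E,h}>0$ on every local coordinate $U$ (and on $X$ in the Hermitian case). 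Here $A^{n,1}_{E,h}>0$ on $U$ means that there is a Hermitian metric $\omega$ on $U$ with $A^{n,1}_{E,h,\omega}>0$; the crucial point, used repeatedly, is that both Theorem \ref{A_E and A_E^*} and Theorem \ref{(p,q) and (n-q,n-p)} are biconditionals for each \emph{fixed} $\omega$, so a single witnessing metric $\omega$ is carried unchanged from one curvature operator to the next.

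Next I would apply Theorem \ref{A_E and A_E^*} with $(p,q)=(n,1)$: for the same $\omega$ one has $A^{n,1}_{E,h,\omega}>0 \iff A^{0,n-1}_{E^*,h^*,\omega}>0$, yielding the equivalence with $A^{0,n-1}_{E^*,h^*}>0$ on $U$. Then I would apply Theorem \ref{(p,q) and (n-q,n-p)} with $(p,q)=(n,1)$, whose reflection sends $(n,1)$ to $(n-1,0)$ and flips the sign, giving $A^{n,1}_{E,h,\omega}>0 \iff A^{n-1,0}_{E,h,\omega}<0$, i.e. the equivalence with $A^{n-1,0}_{E,h}<0$ on $U$. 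Finally, applying Theorem \ref{A_E and A_E^*} once more, now with $(p,q)=(n-1,0)$, gives $A^{n-1,0}_{E,h,\omega}<0 \iff A^{1,n}_{E^*,h^*,\omega}<0$, completing the first chain; equivalently, this last link follows by applying Theorem \ref{(p,q) and (n-q,n-p)} to $E^*$ at bidegree $(0,n-1)$. The second chain, for $(E,h)<_{Nak}0$, is obtained by running exactly the same argument with every strict inequality reversed, starting from the negative variant of Lemma \ref{Nakano iff A(n,1)}.

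The argument is essentially bookkeeping, so there is no deep obstacle; the one point demanding care is the compatibility of the pointwise notion of Nakano positivity, which is defined through the Hermitian form $\theta_{E,h}$ independently of $\omega$, with the metric-dependent curvature-operator conditions. This is exactly what the phrase \emph{on $U$} encodes, and it is resolved by observing that each application of Theorem \ref{A_E and A_E^*} or Theorem \ref{(p,q) and (n-q,n-p)} preserves the same metric $\omega$, so the existence of a good metric propagates coherently along the whole chain. In the Hermitian case this single metric may be taken globally on $X$, which is precisely what permits replacing $U$ by $X$ throughout.
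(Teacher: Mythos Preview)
Your proposal is correct and follows exactly the approach indicated in the paper, which simply records the corollary as an immediate consequence of Lemma \ref{Nakano iff A(n,1)}, Theorem \ref{A_E and A_E^*}, and Theorem \ref{(p,q) and (n-q,n-p)} without writing out the chain explicitly. Your careful tracking of the witnessing metric $\omega$ through each biconditional is the right way to justify the ``on $U$'' (respectively ``on $X$'') formulation.
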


Hence, from Theorem \ref{Main thm 2}, Lemma \ref{Nakano iff A(n,1)} and Corollary \ref{Nakano & curvature operator}, we have the following theorem.
This type of theorem for Nakano semi-positive was first shown in \cite{DNWZ20}.

\begin{theorem}$(\mathrm{= Theorem~ \ref{Main thm 3}})$
    Let $(X,\omega)$ be a \kah manifold of dimension $n$ which admits a positive holomorphic Hermitian line bundle and $(E,h)$ be a holomorphic Hermitian vector bundle over $X$. 
    Then $(E^*,h^*)$ satisfies the $(1,n)$-$L^2$-estimate condition on $X$ if and only if $(E,h)$ is Nakano semi-negative.
\end{theorem}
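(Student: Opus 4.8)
The plan is to reduce everything to Theorem \ref{Main thm 2} applied to the dual bundle $(E^*,h^*)$, combined with the purely algebraic symmetries of the curvature operator recorded in Theorems \ref{(p,q) and (n-q,n-p)} and \ref{A_E and A_E^*} and in Lemma \ref{Nakano iff A(n,1)}. Since $\omega$ is \kah and $X$ carries a positive holomorphic Hermitian line bundle, Theorem \ref{Main thm 2} with $p=1$ tells us that $(E^*,h^*)$ satisfies the $(1,n)$-$L^2_\omega$-estimate condition on $X$ if and only if $A^{1,n}_{E^*,h^*,\omega}\geq0$. Thus the task splits into two parts: identifying the condition $A^{1,n}_{E^*,h^*,\omega}\geq0$ with the Nakano semi-negativity of $(E,h)$, and reconciling this metric-fixed condition with the metric-free $(1,n)$-$L^2$-estimate condition of Definition \ref{def (p,n)-condition}.

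For the first part I would run the semi-definite version of the chain underlying Corollary \ref{Nakano & curvature operator}. By Lemma \ref{Nakano iff A(n,1)}, $(E,h)\leq_{Nak}0$ is equivalent to $A^{n,1}_{E,h,\omega}\leq0$, and this holds for the specific metric $\omega$ at hand precisely because Nakano semi-negativity is a pointwise curvature condition that does not depend on the Hermitian metric used to build the operator. Then Theorem \ref{(p,q) and (n-q,n-p)} with $(p,q)=(n,1)$ converts $A^{n,1}_{E,h,\omega}\leq0$ into $A^{n-1,0}_{E,h,\omega}\geq0$, and Theorem \ref{A_E and A_E^*} with $(p,q)=(n-1,0)$ converts this into $A^{1,n}_{E^*,h^*,\omega}\geq0$, yielding
\[
    (E,h)\leq_{Nak}0 \iff A^{n,1}_{E,h,\omega}\leq0 \iff A^{n-1,0}_{E,h,\omega}\geq0 \iff A^{1,n}_{E^*,h^*,\omega}\geq0.
\]

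The second part handles the existential quantifier over \kah metrics. For the backward implication I would simply take $\tilde\omega=\omega$: Nakano semi-negativity gives $A^{1,n}_{E^*,h^*,\omega}\geq0$ by the chain above, and then Theorem \ref{Main thm 2} yields the $(1,n)$-$L^2_\omega$-estimate condition, hence a fortiori the $(1,n)$-$L^2$-estimate condition. For the forward implication I would let $\tilde\omega$ be a witnessing \kah metric, apply Theorem \ref{Main thm 2} to obtain $A^{1,n}_{E^*,h^*,\tilde\omega}\geq0$, and then run the same chain of equivalences with $\tilde\omega$ in place of $\omega$ to reach $A^{n,1}_{E,h,\tilde\omega}\leq0$, i.e.\ Nakano semi-negativity by Lemma \ref{Nakano iff A(n,1)}.

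I expect the only genuinely delicate point to be this mismatch of quantifiers: the $(1,n)$-$L^2$-estimate condition promises only \emph{some} \kah metric, whereas Nakano semi-negativity is metric-free. The argument closes precisely because Lemma \ref{Nakano iff A(n,1)} guarantees that the truth value of $A^{n,1}_{E,h,\tilde\omega}\leq0$ is the same for every Hermitian metric $\tilde\omega$, so the witnessing metric in the forward direction and the chosen metric $\omega$ in the backward direction may be interchanged freely. All the analytic difficulty has already been absorbed into Theorem \ref{Main thm 2}, so beyond this bookkeeping I anticipate no further obstacle.
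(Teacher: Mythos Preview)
Your proposal is correct and follows essentially the same route as the paper: the paper simply records that the result follows from Theorem \ref{Main thm 2}, Lemma \ref{Nakano iff A(n,1)} and Corollary \ref{Nakano & curvature operator}, which is exactly the chain you spell out (your use of Theorems \ref{(p,q) and (n-q,n-p)} and \ref{A_E and A_E^*} just unpacks Corollary \ref{Nakano & curvature operator}). Your explicit handling of the existential quantifier over \kah metrics in Definition \ref{def (p,n)-condition} is more careful than the paper, which leaves that point implicit, but the argument is the same.
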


We introduce another notion about Nakano-type positivity.

\begin{definition}$(\mathrm{cf.~}$\cite{LSY13},~\cite{Dem20})
    Let $X$ be a complex manifold of complex dimension $n$ and $(E,h)$ be a holomorphic Hermitian vector bundle of rank $r$ over $X$.
    $(E,h)$ is said to be $\it{dual~Nakano ~positive}$ (resp. $\it{dual~Nakano ~semi}$-$\it{positive}$) if $(E^*,h^*)$ is Nakano negative (resp. Nakano semi-negative).
\end{definition}

From definitions, we see immediately that if $(E,h)$ is Nakano positive or dual Nakano positive then $(E,h)$ is Griffiths positive. And there is an example of dual Nakano positive as follows.
Let $h_{FS}$ be the Fubini-Study metric on $T_{\mathbb{P}^n}$, then $(T_{\mathbb{P}^n},h_{FS})$ is dual Nakano positive and Nakano semi-positive (cf. [LSY13, Corollary 7.3]). 
$(T_{\mathbb{P}^n},h_{FS})$ is easyly shown to be ample, but it is not Nakano positive.
In fact, if $(T_{\mathbb{P}^n},h_{FS})$ is Nakano positive then from the Nakano vanishing theorem (see \cite{Nak55}), we have that 
\begin{align*}
    H^{n-1,n-1}(\mathbb{P}^n,\mathbb{C})=H^{n-1}(\mathbb{P}^n,\Omega^{n-1}_{\mathbb{P}^n})=H^{n-1}(\mathbb{P}^n,K_{\mathbb{P}^n}\otimes T_{\mathbb{P}^n})=0.
\end{align*}
However, this contradicts $H^{n-1,n-1}(\mathbb{P}^n,\mathbb{C})=\mathbb{C}$.

In the same way as in Theorem \ref{Main thm 3}, we obtain the following corollary.

\begin{corollary}
    Let $(X,\omega)$ be a \kah manifold of dimension $n$ which admits a positive holomorphic Hermitian line bundle and $(E,h)$ be a holomorphic Hermitian vector bundle over $X$. 
    Then $(E,h)$ satisfies the $(1,n)$-$L^2$-estimate condition if and only if $(E,h)$ is dual Nakano semi-positive.
\end{corollary}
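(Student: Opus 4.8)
The plan is to deduce this statement directly from Theorem \ref{Main thm 3} by passing to the dual bundle, so that essentially no new analysis is required. The key observation is that, by the very definition of dual Nakano semi-positivity, the assertion ``$(E,h)$ is dual Nakano semi-positive'' is literally the assertion ``$(E^*,h^*)$ is Nakano semi-negative.'' Hence it suffices to rephrase Theorem \ref{Main thm 3} with the roles of the bundle and its dual interchanged, and the two directions of the desired equivalence will be handled simultaneously.

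Concretely, I would apply Theorem \ref{Main thm 3} not to $(E,h)$ but to the holomorphic Hermitian vector bundle $(E^*,h^*)$, which is again a bundle over the same \kah manifold $(X,\omega)$ admitting a positive holomorphic Hermitian line bundle, so the hypotheses are met. Theorem \ref{Main thm 3} applied to $(E^*,h^*)$ asserts that its dual $((E^*)^*,(h^*)^*)$ satisfies the $(1,n)$-$L^2$-estimate condition on $X$ if and only if $(E^*,h^*)$ is Nakano semi-negative. The one point to check is the biduality identification $((E^*)^*,(h^*)^*)=(E,h)$: for a finite-rank holomorphic vector bundle the canonical isomorphism $E^{**}\cong E$ is compatible with the induced Hermitian metrics, and under this identification the Chern curvatures agree, consistently with the relation $i\Theta_{E^*,h^*}=-i\Theta_{E,h}^{\dagger}$ already used in the proof of Theorem \ref{A_E and A_E^*}. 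Therefore the left-hand side of the restated theorem is exactly the statement that $(E,h)$ satisfies the $(1,n)$-$L^2$-estimate condition.

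Chaining these equivalences gives that $(E,h)$ satisfies the $(1,n)$-$L^2$-estimate condition if and only if $(E^*,h^*)$ is Nakano semi-negative, which by definition holds if and only if $(E,h)$ is dual Nakano semi-positive, completing the argument. I do not anticipate a genuine obstacle here, since all the analytic content --- the construction of the singular metrics via Proposition \ref{posi line bdl metric}, the Bochner--Kodaira--Nakano estimate, and the contradiction argument --- is already packaged inside Theorem \ref{Main thm 3}; the only thing to be careful about is the bookkeeping of duals and the verification that taking the dual twice returns the original Hermitian bundle on the nose.
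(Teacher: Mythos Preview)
Your proposal is correct and essentially matches the paper's approach: the paper simply states that the corollary is obtained ``in the same way as in Theorem \ref{Main thm 3},'' and your reduction---applying Theorem \ref{Main thm 3} to $(E^*,h^*)$ and using biduality together with the definition of dual Nakano semi-positivity---is precisely the intended argument.
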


Finally, we obtain the following characterizations of Nakano semi-negativity, semi-positivity and dual Nakano semi-positive by $L^2$-estimates for smooth Hermitian metrics on a general complex manifold.

\begin{corollary}\label{smooth Nakano semi iff}
    Let $X$ be a complex manifold of dimension $n$ and $(E,h)$ be a holomorphic Hermitian vector bundle over $X$. For any local Stain coordinate system $\{(U_\alpha,\iota_\alpha)\}_\alpha$, we have the following
    \begin{itemize}
        \item $(E,h)$ is Nakano semi-positive if and only if $(E,h)$ satisfies the $(n,1)$-$L^2$-estimate condition on any $U_\alpha$.
        \item $(E,h)$ is Nakano semi-negative if and only if $(E^*,h^*)$ satisfies the $(1,n)$-$L^2$-estimate condition on any $U_\alpha$.
        \item $(E,h)$ is dual Nakano semi-positive if and only if $(E,h)$ satisfies the $(1,n)$-$L^2$-estimate condition on any $U_\alpha$.
    \end{itemize}
\end{corollary}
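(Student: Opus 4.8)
The plan is to reduce each of the three statements to the corresponding global characterization already established, by exploiting that every chart $U_\alpha$ of a local Stein coordinate system is itself a Stein manifold and therefore admits a positive holomorphic Hermitian line bundle. This is exactly what makes the hypothesis ``admits a positive holomorphic Hermitian line bundle'' in Theorem \ref{Main thm 1}, Theorem \ref{Main thm 3} and the corollary preceding this one automatically available on each $U_\alpha$, even though the ambient $X$ is an arbitrary complex manifold.

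First I would record the key local observation. Since $\iota_\alpha:U_\alpha\to\iota_\alpha(U_\alpha)\subset\mathbb{C}^n$ is a biholomorphism onto a Stein open set, $U_\alpha$ is Stein, hence carries a strictly plurisubharmonic exhaustion function $\varphi$. Then the trivial line bundle $A=U_\alpha\times\mathbb{C}$ equipped with $h_A=e^{-\varphi}$ satisfies $i\Theta_{A,h_A}=\idd\varphi>0$, so $(A,h_A)$ is a positive holomorphic Hermitian line bundle over $U_\alpha$. Thus each of the earlier theorems applies verbatim with $X$ replaced by the Stein chart $U_\alpha$.

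Next I would use that Nakano semi-positivity, Nakano semi-negativity and dual Nakano semi-positivity are all \emph{pointwise} conditions on the curvature tensor $i\Theta_{E,h}$: $(E,h)$ has one of these properties on $X$ if and only if it has it at every point, hence if and only if it has it on every member $U_\alpha$ of a cover of $X$. For the first item this is Lemma \ref{Nakano iff A(n,1)}, which identifies Nakano semi-positivity with $A^{n,1}_{E,h,\omega}\geq0$; combining it with Theorem \ref{Main thm 1} for $q=1$ applied on each $U_\alpha$ gives the equivalence with the $(n,1)$-$L^2$-estimate condition on every $U_\alpha$. For the second item, Theorem \ref{Main thm 3} applied on each $U_\alpha$ yields that $(E^*,h^*)$ satisfies the $(1,n)$-$L^2$-estimate condition on $U_\alpha$ precisely when $(E,h)$ is Nakano semi-negative there, and the local-to-global passage finishes it. The third item is handled identically, using the corollary preceding this one together with the definition of dual Nakano semi-positivity as Nakano semi-negativity of $(E^*,h^*)$.

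The only point requiring care, and the main (mild) obstacle, is the matching between the two flavors of the $L^2$-estimate condition: the subscripted version fixes the metric $\omega$, whereas the unsubscripted one only asks for the existence of some \kah metric $\tilde\omega$. Here Lemma \ref{Nakano iff A(n,1)} is decisive, since it shows that the sign of $A^{n,1}_{E,h,\omega}$ is independent of $\omega$; so if $(E,h)$ is Nakano semi-positive on $U_\alpha$ I may feed any \kah metric on $U_\alpha$ into Theorem \ref{Main thm 1} to produce the unsubscripted condition, while conversely the existence of one good $\tilde\omega$ already forces $A^{n,1}_{E,h,\tilde\omega}\geq0$ and hence Nakano semi-positivity. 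The analogous metric-independence for the $(1,n)$ and dual statements follows from Theorem \ref{(p,q) and (n-q,n-p)}, Theorem \ref{A_E and A_E^*} and Corollary \ref{Nakano & curvature operator}, so the same argument applies unchanged.
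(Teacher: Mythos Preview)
Your argument is correct and follows essentially the same route as the paper: reduce to each Stein chart $U_\alpha$, observe that $U_\alpha$ admits a positive Hermitian line bundle, and then invoke the global characterizations (Theorems \ref{Main thm 1}, \ref{Main thm 2}/\ref{Main thm 3}) together with the pointwise nature of the Nakano conditions via Lemma \ref{Nakano iff A(n,1)} and Corollary \ref{Nakano & curvature operator}. The paper's proof additionally records that Steinness of $U_\alpha$ provides a \emph{complete} K\"ahler metric $\widehat{\omega}$, which is implicitly needed when applying the $L^2$-existence theorems in the ``curvature $\Rightarrow$ $L^2$-estimate'' direction; you use those theorems as stated, so this is not a gap in your argument, but it is worth making explicit.
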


\begin{proof}
    From Steinness of $U_\alpha$, the set $U_\alpha$ has a complete \kah metric $\widehat{\omega}$.
    Here, the \kah manifold $(U_\alpha,\widehat{\omega})$ admits a positive holomorphic Hermitian line bundle. 
    Since Theorem \ref{Main thm 1} and \ref{Main thm 2} and Corollary \ref{Nakano iff A(n,1)}, this proof is completed.
\end{proof}

\section{Applications}\label{section:5}

In this section, as applications of main theorems, 
we prove that the $(n,q)$ and $(p,n)$-$L^2$-estimate condition is preserved with respect to a increasing sequence.
This phenomenon is first mentioned in \cite{Ina21a} 
as an extension of the properties seen in plurisubharmonic functions. 
After that, it is extended to the case of Nakano semi-positivity in \cite{Ina21b}.

\begin{proposition}\label{sequence of (p,q)-L^2 estimate}
    Let $(X,\omega)$ be a \kah manifold of dimension $n$ which admits a positive holomorphic Hermitian line bundle and $E$ be a holomorphic vector bundle over $X$ equipped with a singular Hermitian metric $h$ and let $p$ be a positive integer.
    Assume that there exists a sequence of smooth Hermitian metrics $\{h_\nu\}_{\nu\in\mathbb{N}}$ increasing to $h$ pointwise such that $A^{p,n}_{E,h_\nu,\omega}\geq0$.
    Then $(E,h)$ satisfies the $(p,n)$-$L^2_\omega$-estimate condition on $X$.

\end{proposition}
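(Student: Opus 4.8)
The plan is to reduce to the smooth case already settled in Theorem \ref{Main thm 2} and then pass to a limit. Fix an arbitrary positive holomorphic Hermitian line bundle $(A,h_A)$ on $X$ and a form $f\in\mathcal{D}^{p,n}(X,E\otimes A)$ with $\overline{\partial}f=0$ such that $\int_X\langle B^{-1}f,f\rangle_{h\otimes h_A,\omega}\,dV_\omega<+\infty$, where $B:=[i\Theta_{A,h_A}\otimes\mathrm{id}_E,\Lambda_\omega]$. Since $p\geq 1$ and $(A,h_A)$ is positive, Proposition \ref{calculate (p,q)-forms} shows $B>0$ on $\Lambda^{p,n}T^*_X\otimes E\otimes A$, so $B^{-1}$ is well defined. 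Because each $h_\nu$ is smooth and $A^{p,n}_{E,h_\nu,\omega}\geq0$, Theorem \ref{Main thm 2} applies to $(E,h_\nu)$ and produces, for every $\nu$, a form $u_\nu\in L^2_{p,n-1}(X,E\otimes A)$ with $\overline{\partial}u_\nu=f$ and
\[
\int_X|u_\nu|^2_{h_\nu\otimes h_A,\omega}\,dV_\omega\leq\int_X\langle B^{-1}f,f\rangle_{h_\nu\otimes h_A,\omega}\,dV_\omega,
\]
once the right-hand side is known to be finite.

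The first key step is to establish the pointwise monotonicity $\langle B^{-1}f,f\rangle_{h_\nu\otimes h_A,\omega}\leq\langle B^{-1}f,f\rangle_{h\otimes h_A,\omega}$. Since $B$ acts only on the $\Lambda^{p,n}T^*_X\otimes A$ factor, in a local frame $(e_\lambda)$ of $E$ one writes $f=\sum_\lambda f_\lambda\otimes e_\lambda$ and obtains $\langle B^{-1}f,f\rangle_{h_\nu\otimes h_A,\omega}=\sum_{\lambda,\mu}(h_\nu)_{\lambda\mu}\,G_{\mu\lambda}$, where $G_{\lambda\mu}:=\langle B^{-1}f_\lambda,f_\mu\rangle_{h_A,\omega}$ is the Gram matrix of the positive sesquilinear form attached to the positive operator $B^{-1}$, hence positive semi-definite. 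As the trace of a product of two positive semi-definite Hermitian matrices is monotone in each factor and $h_\nu\leq h$ pointwise, the claim follows. In particular all the right-hand sides above are dominated by the finite constant $C:=\int_X\langle B^{-1}f,f\rangle_{h\otimes h_A,\omega}\,dV_\omega$, so the estimate for $u_\nu$ is legitimate and $\int_X|u_\nu|^2_{h_\nu\otimes h_A,\omega}\,dV_\omega\leq C$ for all $\nu$.

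Next I would extract a weak limit. Since $h_1\leq h_\nu$, we get $\int_X|u_\nu|^2_{h_1\otimes h_A,\omega}\,dV_\omega\leq C$, so $(u_\nu)$ is bounded in the fixed Hilbert space $L^2_{p,n-1}(X,E\otimes A,h_1\otimes h_A,\omega)$; by weak compactness a subsequence $u_{\nu_k}$ converges weakly to some $u$ there. Pairing against smooth compactly supported test forms shows that weak $L^2$-convergence implies convergence as currents, whence $\overline{\partial}u=\lim_k\overline{\partial}u_{\nu_k}=f$.

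Finally I would upgrade the bound from $h_1$ to $h$. For each fixed $m$ and all $\nu_k\geq m$ one has $\int_X|u_{\nu_k}|^2_{h_m\otimes h_A,\omega}\,dV_\omega\leq\int_X|u_{\nu_k}|^2_{h_{\nu_k}\otimes h_A,\omega}\,dV_\omega\leq C$; since $h_m$ is smooth and nondegenerate, any weak $L^2(h_m\otimes h_A)$ cluster point of $(u_{\nu_k})$ must represent the same current as $u$, so $u_{\nu_k}$ converges weakly to $u$ in $L^2(h_m\otimes h_A)$ as well, and weak lower semicontinuity of the norm gives $\int_X|u|^2_{h_m\otimes h_A,\omega}\,dV_\omega\leq C$. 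Letting $m\to\infty$ and using $h_m\nearrow h$ with the monotone convergence theorem yields $\int_X|u|^2_{h\otimes h_A,\omega}\,dV_\omega\leq C$, which is exactly the required estimate. I expect this last bookkeeping of the varying metrics to be the main obstacle: the solutions $u_\nu$ are controlled only in their own norms $|\cdot|_{h_\nu}$, so one must first transfer the bounds to a single fixed space $L^2(h_1)$ to extract a limit, and then climb back up to $|\cdot|_h$ through the intermediate norms $|\cdot|_{h_m}$ via lower semicontinuity and monotone convergence.
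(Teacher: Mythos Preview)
Your argument is correct and follows essentially the same route as the paper's proof: invoke Theorem \ref{Main thm 2} for each smooth $h_\nu$ to produce $u_\nu$, use the monotonicity $h_\nu\leq h$ to obtain a uniform bound, extract a weak limit in a fixed $L^2$-space, and then pass from $|\cdot|_{h_m}$ to $|\cdot|_h$ via lower semicontinuity and monotone convergence. The paper phrases the extraction step as a diagonal argument rather than your two-stage ``converge in $L^2(h_1)$, then identify the limit in each $L^2(h_m)$'' formulation, and it simply asserts the pointwise inequality $\langle B^{-1}f,f\rangle_{h_\nu\otimes h_A}\leq\langle B^{-1}f,f\rangle_{h\otimes h_A}$ without your Gram-matrix justification, but the content is the same.
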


\begin{proof}
    For any positive holomorphic Hermitian line bundle $(A,h_A)$ on $X$, for any $f\in\mathcal{D}^{p,n}(X,E\otimes A,h\otimes h_A,\omega)$ with $\overline{\partial}f=0$ we have that $f\in\mathcal{D}^{p,n}(X,E\otimes A,h_\nu\otimes h_A,\omega)$.
    Since $(E,h_\nu)$ satisfies the $(p,n)$-$L^2_\omega$-estimate condition on $X$, we get a solution $u_\nu$ of $\overline{\partial}u_\nu=f$ satisfying
    \begin{align*}
        \int_X|u_\nu|^2_{h_\nu\otimes h_A,\omega}dV_\omega&\leq\int_X\langle[i\Theta_{A,h_A}\otimes\mathrm{id}_E,\Lambda_\omega]^{-1}f,f\rangle_{h_\nu\otimes h_A,\omega}dV_\omega\\
        &\leq\int_X\langle[i\Theta_{A,h_A}\otimes\mathrm{id}_E,\Lambda_\omega]^{-1}f,f\rangle_{h\otimes h_A,\omega}dV_\omega<+\infty
    \end{align*}
    for each $\nu\in\mathbb{N}$. Here, the right-hand side of the inequality above has an upper bound independent of $\nu$. Then $\{u_\nu\}_{\nu\geq j}$ forms a bounded sequence in $L^2_{p,n-1}(X,E\otimes A,h_j\otimes h_A,\omega)$ due to the monotonicity of $\{h_\nu\}$.
    Therefore we can get a weakly convergent subsequence $\{u_{\nu_k}\}_{k\in\mathbb{N}}$ by using a diagonal argument and the monotonicity of $\{h_\nu\}$. We have that $\{u_{\nu_k}\}_{k\in\mathbb{N}}$ weakly converges in $L^2_{p,n-1}(X,E\otimes A,h_\nu\otimes h_A,\omega)$ for every $\nu$.
    Hence, the weak limit denoted by $u_\infty$ satisfies $\overline{\partial}u_\infty=f$ and 
    \begin{align*}
        \int_X|u_\infty|^2_{h\otimes h_A,\omega}dV_\omega\leq\int_X\langle[i\Theta_{A,h_A}\otimes\mathrm{id}_E,\Lambda_\omega]^{-1}f,f\rangle_{h\otimes h_A,\omega}dV_\omega
    \end{align*}
    due to the monotone convergence theorem. From the above, we have that $(E,h)$ satisfies the $(p,n)$-$L^2_\omega$-estimate condition on $X$.
\end{proof}

From Theorem \ref{Main thm 1} and \ref{Main thm 2} and Corollary \ref{(0,q) or (p,0) characterization} and \ref{smooth Nakano semi iff}, it is natural to define semi-positivity of curvature operators, Nakano semi-positive, semi-negative and dual Nakano semi-positive by extending from smooth Hermitian metrics to singular Hermitian metrics as follows.

\begin{definition}\label{sing curvatur operator}
    Let $X$ be a \kah manifold equipped with a complete \kah metric and $E$ be a holomorphic vector bundle over $X$ equipped with a singular Hermitian metric $h$.
    For any positive integers $p$ and $q$ and any \kah metric $\omega$, a curvature operator
    \begin{itemize}
        \item $A^{n,q}_{E,h,\omega}$ is said to be $\it{semi}$-$\it{positive~ in~ the~ sense~ of~ singular}$ 
        if $(E,h)$ satisfies the $(n,q)$-$L^2$-estimate condition on $X$.
        \item $A^{0,q}_{E,h,\omega}$ is said to be $\it{semi}$-$\it{positive~ in~ the~ sense~ of~ singular}$ 
        if $(E^*,h^*)$ satisfies the $(n,n-q)$-$L^2$-estimate condition on $X$, where $q\leq n-1$.
        \item $A^{p,n}_{E,h,\omega}$ is said to be $\it{semi}$-$\it{positive~ in~ the~ sense~ of~ singular}$ 
        if $(E,h)$ satisfies the $(p,n)$-$L^2$-estimate condition on $X$.
        \item $A^{p,0}_{E,h,\omega}$ is said to be $\it{semi}$-$\it{positive~ in~ the~ sense~ of~ singular}$ 
        if $(E^*,h^*)$ satisfies the $(n-p,n)$-$L^2$-estimate condition on $X$.
    \end{itemize}
\end{definition}

\begin{definition}\label{sing Nakano def}
    Let $X$ be a complex manifold and $E$ be a holomorphic vector bundle over $X$ equipped with a singular Hermitian metric $h$. For any local Stain coordinate system $\{(U_\alpha,\iota_\alpha)\}_\alpha$, we define the following.
    \begin{itemize}
        \item $(E,h)$ is said to be $\it{Nakano~ semi}$-$\it{positive~ in~ the~ sense~ of~ singular}$ if $(E,h)$ satisfies the $(n,1)$-$L^2$-estimate condition on any $U_\alpha$.
        \item $(E,h)$ is said to be $\it{Nakano~ semi}$-$\it{negative~ in~ the~ sense~ of~ singular}$ if $(E^*,h^*)$ satisfies the $(1,n)$-$L^2$-estimate condition on any $U_\alpha$.
        \item $(E,h)$ is said to be $\it{dual~ Nakano~ semi}$-$\it{positive~ in~ the~ sense~ of~ singular}$ if $(E,h)$ satisfies the $(1,n)$-$L^2$-estimate condition on any $U_\alpha$.
    \end{itemize}
\end{definition}

Since Proposition \ref{sequence of (p,q)-L^2 estimate} and the proof of [Ina22,\,Proposition\,6.1], we obtain the following corollaries.

\begin{corollary}
    Let $X$ be a \kah manifold equipped with a complete \kah metric and $E$ be a holomorphic vector bundle over $X$ equipped with a (singular) Hermitian metric $h$.
    Let $p$ and $q$ be positive integers and $\omega$ be a \kah metric on $X$. 
    Assume that there exists a sequence of smooth Hermitian metrics $\{h_\nu\}_{\nu\in\mathbb{N}}$ such that $A^{n,q}_{E,h_\nu,\omega}\geq0$ (resp. $A^{p,n}_{E,h_\nu,\omega}\geq0$, $A^{0,q}_{E,h_\nu,\omega}\geq0$, $A^{p,0}_{E,h_\nu,\omega}\geq0$) for each $\nu$.

    If $\{h_\nu\}_{\nu\in\mathbb{N}}$ increases to $h$ pointwise then
    $A^{n,q}_{E,h,\omega}$ (resp. $A^{p,n}_{E,h,\omega}$) is semi-positive in the sense of singular as in Definition \ref{sing curvatur operator}.

    If $\{h_\nu\}_{\nu\in\mathbb{N}}$ decreases to $h$ pointwise then
    $A^{0,q}_{E,h,\omega}$ (resp. $A^{p,0}_{E,h,\omega}$) is semi-positive in the sense of singular as in Definition \ref{sing curvatur operator}.
\end{corollary}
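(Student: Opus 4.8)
The plan is to split the statement into the two \emph{increasing} cases, where the curvature operator is of type $(n,q)$ or $(p,n)$, and the two \emph{decreasing} cases, where it is of type $(0,q)$ or $(p,0)$, and to reduce every case to Proposition \ref{sequence of (p,q)-L^2 estimate} (and its evident $(n,q)$-analogue) read through Definition \ref{sing curvatur operator}. For the increasing cases the reduction is immediate. For the $(p,n)$ operator, the hypothesis $A^{p,n}_{E,h_\nu,\omega}\geq0$ with $h_\nu\nearrow h$ is exactly the hypothesis of Proposition \ref{sequence of (p,q)-L^2 estimate}, so $(E,h)$ satisfies the $(p,n)$-$L^2_\omega$-estimate condition on $X$, which by Definition \ref{sing curvatur operator} means precisely that $A^{p,n}_{E,h,\omega}$ is semi-positive in the sense of singular. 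For the $(n,q)$ operator I would first observe that the proof of Proposition \ref{sequence of (p,q)-L^2 estimate} goes through verbatim with $(p,n)$ replaced by $(n,q)$: one uses Theorem \ref{Main thm 1} in place of Theorem \ref{Main thm 2} to obtain, for each $\nu$, a solution $u_\nu$ of $\overline{\partial}u_\nu=f$ with the required bound, and then the same diagonal/weak-limit argument together with monotone convergence produces a limit solution. Thus $(E,h)$ satisfies the $(n,q)$-$L^2_\omega$-estimate condition, giving semi-positivity of $A^{n,q}_{E,h,\omega}$ in the sense of singular.

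For the decreasing cases the key observation is that dualization reverses monotonicity: if $\{h_\nu\}$ decreases to $h$ pointwise, then the dual metrics $\{h^*_\nu\}$ increase to $h^*$ pointwise, since passing to inverses reverses the order of positive Hermitian forms and commutes with the pointwise limit. Combined with Theorem \ref{A_E and A_E^*}, which gives $A^{0,q}_{E,h_\nu,\omega}\geq0\iff A^{n,n-q}_{E^*,h^*_\nu,\omega}\geq0$ and $A^{p,0}_{E,h_\nu,\omega}\geq0\iff A^{n-p,n}_{E^*,h^*_\nu,\omega}\geq0$, this transports the problem to the increasing setting for $(E^*,h^*)$. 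Concretely, for the $(0,q)$ case I would apply the $(n,n-q)$-analogue of Proposition \ref{sequence of (p,q)-L^2 estimate} to $(E^*,h^*)$ with the increasing sequence $\{h^*_\nu\}$, concluding that $(E^*,h^*)$ satisfies the $(n,n-q)$-$L^2$-estimate condition on $X$; by Definition \ref{sing curvatur operator} this says exactly that $A^{0,q}_{E,h,\omega}$ is semi-positive in the sense of singular. The $(p,0)$ case is handled identically, applying Proposition \ref{sequence of (p,q)-L^2 estimate} for $(n-p,n)$-forms to $(E^*,h^*)$ and reading off the conclusion through Definition \ref{sing curvatur operator}.

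I expect the main obstacle to be bookkeeping rather than analysis: one must check carefully that dualization interchanges the increasing and decreasing hypotheses and that the bidegree shifts match those recorded in Definition \ref{sing curvatur operator}, so that $(0,q)$ corresponds to $(n,n-q)$ for $E^*$ and $(p,0)$ to $(n-p,n)$ for $E^*$. The only genuine content beyond citing Proposition \ref{sequence of (p,q)-L^2 estimate} is verifying that its proof is insensitive to whether the top degree sits in the holomorphic or antiholomorphic slot, i.e. that the $(n,q)$-version holds by the same argument; since the weak-compactness and monotone-convergence steps never use the bidegree except to invoke the corresponding characterization (Theorem \ref{Main thm 1} or \ref{Main thm 2}), this presents no difficulty.
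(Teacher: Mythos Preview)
Your proposal is correct and matches the paper's intent. The paper itself gives no proof beyond the sentence ``Since Proposition \ref{sequence of (p,q)-L^2 estimate} and the proof of [Ina22,\,Proposition\,6.1], we obtain the following corollaries,'' so you have simply spelled out what that citation encodes: the increasing cases are Proposition \ref{sequence of (p,q)-L^2 estimate} (and its obvious $(n,q)$-variant via Theorem \ref{Main thm 1}), while the decreasing cases are handled by passing to the dual bundle, where the monotonicity flips and Theorem \ref{A_E and A_E^*} converts $A^{0,q}_{E,h_\nu,\omega}\ge0$ and $A^{p,0}_{E,h_\nu,\omega}\ge0$ into the $(n,n-q)$- and $(n-p,n)$-conditions for $(E^*,h^*_\nu)$, after which Definition \ref{sing curvatur operator} reads off the conclusion directly.
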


\begin{corollary}$(\mathrm{cf.~[Ina22,~Proposition ~6.1]})$
    Let $X$ be a complex manifold and $E$ be a holomorphic vector bundle over $X$ equipped with a singular Hermitian metric $h$.
    Assume that there exists a sequence of smooth Nakano semi-positive metrics $\{h_\nu\}_{\nu\in\mathbb{N}}$ increasing to $h$ pointwise.
    Then $h$ is Nakano semi-positive in the sense of singular as in Definition \ref{sing Nakano def}.
\end{corollary}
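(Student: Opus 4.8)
The plan is to reduce to a single Stein chart and there invoke the $(n,1)$-analogue of Proposition~\ref{sequence of (p,q)-L^2 estimate}. By Definition~\ref{sing Nakano def}, it suffices to show that $(E,h)$ satisfies the $(n,1)$-$L^2$-estimate condition on every $U_\alpha$ of the given local Stein coordinate system. So first I would fix such a $U_\alpha$; by Steinness it carries a complete \kah metric $\widehat{\omega}$ and admits a positive holomorphic Hermitian line bundle, which are exactly the standing hypotheses needed to apply Theorem~\ref{Main thm 1} on $U_\alpha$.

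Next I would transfer the hypothesis into curvature-operator form. Each $h_\nu$ is smooth and Nakano semi-positive on $X$, hence on $U_\alpha$, so Lemma~\ref{Nakano iff A(n,1)} gives $A^{n,1}_{E,h_\nu,\omega}\geq0$ for a chosen \kah metric $\omega$ on $U_\alpha$. Theorem~\ref{Main thm 1} then shows that each $(E,h_\nu)$ satisfies the $(n,1)$-$L^2_\omega$-estimate condition on $U_\alpha$. This is the precise input required to run the monotone-limit argument.

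I would then repeat the proof of Proposition~\ref{sequence of (p,q)-L^2 estimate} with $(p,n)$ replaced by $(n,1)$ throughout; that proof never used anything specific to $(p,n)$-forms beyond the validity of the corresponding $L^2$-condition for each $h_\nu$, together with monotonicity, weak compactness and monotone convergence. Concretely, for a positive holomorphic Hermitian line bundle $(A,h_A)$ and a $\overline{\partial}$-closed $f\in\mathcal{D}^{n,1}(U_\alpha,E\otimes A)$ with finite right-hand side, each $h_\nu$ supplies a solution $u_\nu$ of $\overline{\partial}u_\nu=f$ with $\int_{U_\alpha}|u_\nu|^2_{h_\nu\otimes h_A,\omega}dV_\omega$ bounded by $\int_{U_\alpha}\langle[i\Theta_{A,h_A}\otimes\mathrm{id}_E,\Lambda_\omega]^{-1}f,f\rangle_{h\otimes h_A,\omega}dV_\omega$, a bound independent of $\nu$ since $h_\nu\nearrow h$. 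Monotonicity makes $\{u_\nu\}_{\nu\geq j}$ a bounded sequence in $L^2_{n,0}(U_\alpha,E\otimes A,h_j\otimes h_A,\omega)$ for each fixed $j$; a diagonal argument then produces a subsequence converging weakly in every such space, whose weak limit $u_\infty$ solves $\overline{\partial}u_\infty=f$ and, by the monotone convergence theorem, satisfies the estimate measured with the limiting metric $h$. Thus $(E,h)$ satisfies the $(n,1)$-$L^2_\omega$-estimate condition on $U_\alpha$, and since $U_\alpha$ is arbitrary, $h$ is Nakano semi-positive in the sense of singular.

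The only delicate point, exactly as in Proposition~\ref{sequence of (p,q)-L^2 estimate}, is the weak-limit passage: one must check that the limit $u_\infty$ inherits the $L^2$-bound with respect to $h$ and not merely with respect to each approximant $h_j$. This is where the monotonicity $h_\nu\nearrow h$ is essential, both to secure the uniform bound and, via monotone convergence as $j\to\infty$, to recover the $h$-estimate. Everything else is a mechanical transcription of Theorem~\ref{Main thm 1} and Proposition~\ref{sequence of (p,q)-L^2 estimate} to the $(n,1)$-setting.
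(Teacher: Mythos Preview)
Your proposal is correct and matches the paper's approach: the paper does not spell out a proof but simply says the corollary follows from Proposition~\ref{sequence of (p,q)-L^2 estimate} (adapted from $(p,n)$ to $(n,1)$) together with the argument of [Ina22, Proposition~6.1], which is precisely the reduction to a Stein chart plus the monotone weak-limit passage you describe. Your use of Lemma~\ref{Nakano iff A(n,1)} and Theorem~\ref{Main thm 1} to verify the input $A^{n,1}_{E,h_\nu,\omega}\geq0$ (hence the $(n,1)$-$L^2_\omega$-condition for each $h_\nu$) on $U_\alpha$ is exactly what the paper does implicitly via the proof of Corollary~\ref{smooth Nakano semi iff}.
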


\begin{corollary}
    Let $X$ be a complex manifold and $E$ be a holomorphic vector bundle over $X$ equipped with a singular Hermitian metric $h$.
    Assume that there exists a sequence of smooth Nakano semi-negative metrics $\{h_\nu\}_{\nu\in\mathbb{N}}$ decreasing to $h$ pointwise.
    Then $h$ is Nakano semi-negative in the sense of singular as in Definition \ref{sing Nakano def}.
\end{corollary}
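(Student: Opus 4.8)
The plan is to dualize and reduce everything to Proposition \ref{sequence of (p,q)-L^2 estimate} with $p=1$ applied to $(E^*,h^*)$. By Definition \ref{sing Nakano def}, I must show that $(E^*,h^*)$ satisfies the $(1,n)$-$L^2$-estimate condition on each member $U_\alpha$ of the given local Stein coordinate system. As in the proof of Corollary \ref{smooth Nakano semi iff}, Steinness of $U_\alpha$ guarantees a complete \kah metric $\widehat{\omega}$ together with a positive holomorphic Hermitian line bundle, so the standing hypotheses of Proposition \ref{sequence of (p,q)-L^2 estimate} are met on $U_\alpha$; I fix there a \kah metric $\omega$.

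The key translation is that passing to the dual metric reverses monotonicity. Writing $h_\nu$ and $h$ as positive Hermitian matrices in a local frame, the hypothesis $h_\nu\downarrow h$ pointwise together with the elementary fact that $H\mapsto H^{-1}$ reverses the order on positive definite Hermitian matrices shows that the dual metrics $h_\nu^*$ increase pointwise to $h^*$; the limit $h^*$ is again a genuine singular Hermitian metric, since $0<\det h<+\infty$ almost everywhere forces $0<\det h^*<+\infty$ almost everywhere. Thus $\{h_\nu^*\}_{\nu\in\mathbb{N}}$ is a sequence of smooth Hermitian metrics on $E^*$ increasing pointwise to $h^*$.

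Next I would record the curvature consequence of Nakano semi-negativity. Since each $h_\nu$ is smooth and Nakano semi-negative, Lemma \ref{Nakano iff A(n,1)} in its semi-negative form together with Corollary \ref{Nakano & curvature operator} gives $A^{1,n}_{E^*,h_\nu^*,\omega}\geq0$ on $U_\alpha$ for every Hermitian metric $\omega$, in particular for the $\omega$ fixed above. I then apply Proposition \ref{sequence of (p,q)-L^2 estimate} on $U_\alpha$ to the bundle $E^*$ with the increasing approximants $h_\nu^*$ and $p=1$: the conclusion is that $(E^*,h^*)$ satisfies the $(1,n)$-$L^2_\omega$-estimate condition, hence the $(1,n)$-$L^2$-estimate condition, on $U_\alpha$. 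As $U_\alpha$ was an arbitrary member of the local Stein coordinate system, Definition \ref{sing Nakano def} yields that $h$ is Nakano semi-negative in the sense of singular.

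The main obstacle is the order-reversal step, i.e. verifying cleanly that $h_\nu\downarrow h$ implies $h_\nu^*\uparrow h^*$ with a well-defined singular limit; this is exactly where the singular-metric condition $0<\det h<+\infty$ is essential, and it is the only place where one must be careful about measurability and the almost-everywhere sense of the convergence. Once this is in place, everything else is a direct invocation of Proposition \ref{sequence of (p,q)-L^2 estimate} along the lines of the proof of [Ina22,\,Proposition\,6.1], with the roles of increasing and decreasing sequences interchanged by duality.
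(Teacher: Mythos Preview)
Your proposal is correct and follows essentially the same route the paper indicates: the paper does not give an explicit proof but points to Proposition \ref{sequence of (p,q)-L^2 estimate} together with the proof of [Ina22,\,Proposition\,6.1], and your argument is precisely the spelled-out version of this, namely dualizing so that $h_\nu\downarrow h$ becomes $h_\nu^*\uparrow h^*$, invoking Corollary \ref{Nakano & curvature operator} (in its semi-version, guaranteed by Lemma \ref{Nakano iff A(n,1)}) to get $A^{1,n}_{E^*,h_\nu^*,\omega}\geq0$, and then applying Proposition \ref{sequence of (p,q)-L^2 estimate} with $p=1$ on each Stein chart.
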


\begin{corollary}
    Let $X$ be a complex manifold and $E$ be a holomorphic vector bundle over $X$ equipped with a singular Hermitian metric $h$.
    Assume that there exists a sequence of smooth dual Nakano semi-positive metrics $\{h_\nu\}_{\nu\in\mathbb{N}}$ increasing to $h$ pointwise.
    Then $h$ is dual Nakano semi-positive in the sense of singular as in Definition \ref{sing Nakano def}.
\end{corollary}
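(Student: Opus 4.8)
The strategy is to mirror the proof of the preceding increasing-sequence corollaries, reducing to Proposition~\ref{sequence of (p,q)-L^2 estimate} on each Stein chart once dual Nakano semi-positivity of the smooth metrics is re-expressed as a sign condition on the curvature operator $A^{1,n}$.

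First I would fix an arbitrary local Stein coordinate system $\{(U_\alpha,\iota_\alpha)\}_\alpha$ and argue on a single chart $U_\alpha$. Since $\iota_\alpha(U_\alpha)\subset\mathbb{C}^n$ is Stein, $U_\alpha$ carries a complete \kah metric and admits a positive holomorphic Hermitian line bundle (e.g. the trivial bundle with weight $e^{-|z|^2}$, whose curvature is the Euclidean form), exactly as used in the proof of Corollary~\ref{smooth Nakano semi iff}. Fix one such \kah metric $\omega$ on $U_\alpha$. Each restriction $h_\nu|_{U_\alpha}$ is smooth, remains dual Nakano semi-positive (this being a pointwise condition), and $\{h_\nu|_{U_\alpha}\}_\nu$ still increases to $h$ pointwise.

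Next I would translate the hypothesis into the form demanded by Proposition~\ref{sequence of (p,q)-L^2 estimate}. For each $\nu$, dual Nakano semi-positivity of $(E,h_\nu)$ means that $(E^*,h_\nu^*)$ is Nakano semi-negative; applying Corollary~\ref{Nakano & curvature operator} to $E^*$ and using $(E^*)^*=E$, this is equivalent to $A^{1,n}_{E,h_\nu}\geq0$. Because the chain of equivalences in Corollary~\ref{Nakano & curvature operator} routes the sign of $A^{1,n}_{E,h_\nu,\omega}$ through the $\omega$-independent Nakano semi-negativity of $(E^*,h_\nu^*)$ (via Theorem~\ref{A_E and A_E^*}, Theorem~\ref{(p,q) and (n-q,n-p)} and Lemma~\ref{Nakano iff A(n,1)}), the inequality $A^{1,n}_{E,h_\nu,\omega}\geq0$ holds for the $\omega$ fixed above --- indeed for every \kah metric on $U_\alpha$. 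I would then invoke Proposition~\ref{sequence of (p,q)-L^2 estimate} with $p=1$ on $(U_\alpha,\omega)$: its hypotheses hold, and it yields that $(E,h)$ satisfies the $(1,n)$-$L^2_\omega$-estimate condition on $U_\alpha$, hence the $(1,n)$-$L^2$-estimate condition on $U_\alpha$ (take $\tilde\omega=\omega$ in Definition~\ref{def (p,n)-condition}). Since $\alpha$ is arbitrary, $(E,h)$ satisfies the $(1,n)$-$L^2$-estimate condition on every $U_\alpha$, which is precisely the definition of dual Nakano semi-positive in the sense of singular (Definition~\ref{sing Nakano def}).

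The only genuine subtlety is this translation step: Proposition~\ref{sequence of (p,q)-L^2 estimate} requires the sign condition $A^{1,n}_{E,h_\nu,\omega}\geq0$ for the one metric $\omega$ actually in hand, so one must be certain that smooth dual Nakano semi-positivity supplies it for that $\omega$ rather than merely for some metric. This is exactly where the metric-independence built into Lemma~\ref{Nakano iff A(n,1)} and Corollary~\ref{Nakano & curvature operator} is essential. All of the analytic weight --- the monotone convergence and the diagonal weak-limit extraction --- is already carried by Proposition~\ref{sequence of (p,q)-L^2 estimate}, so no new estimate has to be produced here.
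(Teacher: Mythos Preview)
Your argument is correct and is exactly the intended one: the paper does not spell out a proof for this corollary but simply points to Proposition~\ref{sequence of (p,q)-L^2 estimate} (together with the proof of [Ina22, Proposition~6.1]) as the source of all four increasing/decreasing-sequence corollaries, and you have supplied precisely those details --- passing to a Stein chart, converting smooth dual Nakano semi-positivity into $A^{1,n}_{E,h_\nu,\omega}\geq0$ via Corollary~\ref{Nakano & curvature operator}, and then invoking Proposition~\ref{sequence of (p,q)-L^2 estimate} with $p=1$. Your attention to the metric-independence of the sign condition is well placed and matches how Lemma~\ref{Nakano iff A(n,1)} and Corollary~\ref{Nakano & curvature operator} are used elsewhere in the paper.
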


{\bf Acknowledgement. } 
I would like to thank my supervisor Professor Shigeharu Takayama for guidance and helpful advice. I would also like to thank Professor Takahiro Inayama for useful advice on applications.

$ $

\rightline{\begin{tabular}{c}
    $\it{Yuta~Watanabe}$ \\
    $\it{Graduate~School~of~Mathematical~Sciences}$ \\
    $\it{The~University~of~Tokyo}$ \\
    $3$-$8$-$1$ $\it{Komaba, ~Meguro}$-$\it{ku}$ \\
    $\it{Tokyo, ~Japan}$ \\
    ($E$-$mail$ $address$: watayu@g.ecc.u-tokyo.ac.jp)
\end{tabular}}

\end{document}